\def\Var{{\textrm{Var}}\,}
\def\E{{\textrm{E}}\,}
\def\Cov{{\textrm{Cov}}\,}
\DeclarePairedDelimiter{\ceil}{\lceil}{\rceil}
\newlength\shlength
\newcommand{\indep}{\perp \!\!\! \perp}
\newcommand*\circled[1]{\tikz[baseline=(char.base)]{
		\node[shape=circle,draw,inner sep=2pt] (char) {#1};}}
\newtheorem{lemma}{Lemma}[section]
\newtheorem{prop}{Proposition}[section]
\newtheorem{thm}{Theorem}[section]
\newenvironment{proof}{\paragraph{Proof:}}{\hfill$\square$}
\newcommand{\address}[1]{\gdef\@address{#1}}
\newcommand{\email}[1]{\gdef\@email{\url{#1}}}
\newcommand{\@endstuff}{\par\vspace{\baselineskip}\noindent\small
	\begin{tabular}{@{}l}\scshape\@address\\\textit{E-mail address:} \@email\end{tabular}}
\begin{document}

\title{The damped wave equation and associated polymer}
\author{Yuanyuan Pan}
\address{Dept. of Mathematics, University of Rochester}
\email{ypan29@ur.rochester.edu}

\maketitle

\begin{abstract}
	Considering the damped wave equation with a Gaussian noise $F$ where $F$ is white in time and has a covariance function depending on spatial variables,  we will see that this equation has a mild solution which is stationary in time $t$. We define a weakly self-avoiding polymer with intrinsic length $J$ associated to this SPDE. Our main result is that the polymer has an effective radius of approximately $J^{5/3}$.
\end{abstract}

\section{Introduction}\label{sec1}

Polymers are studied intensively in many fields. There are many works studying different aspects of polymers. For example,  \textit{Random polymer models} deals with equilibrium statistical mechanics of a class of polymers \cite{Gia07}. \textit{Random polymers} focuses on the interface between probability theory and equilibrium statistical physics \cite{dHo07}.  \textit{The theory of polymer dynamics} concentrates on the dynamics of polymers in the liquid state \cite{DoEd86}. Our work is inspired by Mueller and Neuman's \cite{MN22}. Their work studied the radius of  polymers without self-intersection and showed that, considering the heat equation with white noise, the effective radius of the polymers is approximately $J^{5/3}$.

As stated in \cite{MN22}, the simplest model for a polymer is random walk. If self-intersection is prohibited, we are led to study self-avoiding random walk. We are interested in finding macroscopic extension of a polymer. Such extension is often measured by the variance of the end-to-end distance, $\E[\lvert S_n\rvert^2]$, where $S_n$ is the location of a polymer at $n$ units from its beginning $S_0$. One famous problem is to show that $\E[\lvert S_n\rvert^2]\approx Cn^{2\nu}$ where $(S_n)_{n\in\mathbb{N}}$ is the simple random walk on $\mathbb{Z}^d$ with self-avoiding path, and $\nu$ is a constant depending on $d$.  

\begin{enumerate}
	\item[(1)] When $d\geq 5$, $\nu=\frac{1}{2}$. Hara and Slade followed the idea of Brydges and Spencer \cite{BS85}, and verified the result in \cite{TS91} and \cite{TS92}.
	
	\item[(2)] Almost nothing is known rigorously about $\nu$ in dimension 2, 3, and 4.
	\begin{enumerate}
		\item[(i)] When $d=2$, based on non-rigorous Coulomb gas methods, Nienhuis \cite{NienhuisBernard1982ECPa} predicted that $\nu=\frac{3}{4}$. This predicted value has been confirmed numerically
		by Monte Carlo simulation, e.g. \cite{LMS95}, and exact enumeration of self-avoiding walks up to length n = 71 \cite{JensenIwan2004Eosw}.

		\item[(ii)]  For $d=3$, $\nu$ is expected to be 0.588$\cdots$. An early prediction for the values of $\nu$, referred to as the Flory values \cite{FloryP.J.1951Tcor}, was $\nu=\frac{3}{d+2}$ for $1\leq d\leq 4$. This does give the correct answer for $d=1,2,4$, but it is not accurate when $d=3$. The Flory argument is very remote from a rigorous mathematical proof. 
		
		\item[(iii)] When $d=4$, $\nu$ is expected to be $\frac{1}{2}$. And there should be a logarithmic correction. Dimension four is the upper critical dimension for the self-avoiding walk. The expected number of intersections between two independent random walks tends to infinity, but only logarithmically in the length. Such considerations are related to the logarithmic corrections. Partial works for this case can be found in \cite{BDCGS12} and \cite{BSTW17} and the references therein.
	\end{enumerate}
	
	\item[(3)] When $d=1$, it is obvious to see $\nu=1$.
\end{enumerate}

The case $d=1$ is the simplest, but it still presents challenging questions. For example, if we consider the weakly self-avoiding one-dimensional simple random walks $(S_n)_{n\in\mathbb{N}}$ with $S_0=0$, there is a complete answer \cite{GdH93} to characterize the limiting speed, $\underset{n\rightarrow\infty}{\lim}\frac{1}{n}\left(\E[S_n^2]\right)^{1/2}$. There has also been work on the continuous-time situation, see \cite{HdHK97}.

We  study the radius of polymers that satisfy the damped wave equation in one dimensional space. The wave equation can be used to study the propagation of mechanical waves or vibrational modes within the polymer structure. Polymers are composed of long chains of repeating molecular units, and these chains can exhibit certain vibrational modes when excited. In \cite{DoEd86} chapter 4, if we consider the discrete case, we can use the Rouse model to describe the motion of internal beads of a polymer, that is
\begin{equation}
	dX_i(t)=\Delta X_i(t)dt+dB_i(t) 
\end{equation}\label{rousemodel}
where $\Delta$ is the discrete Laplacian. If $F$ is a force acting on a bead along a polymer chain, then, ideally, $F=ma$ where $m$ is the mass of the bead and $a$ is the acceleration. By (\ref{rousemodel}), $a$ is proportional to the second differential of position.

To be specific, we work with the damped wave equation and a noise that is white in time and colored in space, which will be introduced later. The rigorous definition of white noise is discussed in many references, for example \cite{WalshSPDE86} and \cite{alma9946044013405216}.
The outline of the theorem and some lemmas' proofs are similar to the ones in \cite{MN22}.

We provide an intuitive justification for our main result.

We assume that $l_t(y)$ is constant over $y\in[-R,R]$. Then $l_t(y)=\frac{J}{2R}$. We have
\begin{equation}
	\int_{0}^{T}\int_{-R}^{R}l_t^2(y)dydt=2TR\left(\dfrac{J}{2R}\right)^2=\dfrac{CTJ^2}{R}\label{A1}
\end{equation}
where $C$ is a constant independent from $T$ and $J$. 

We want to find an approximate probability for the colored noise $\dot{F}$.
That is 
\[
\exp\left(-\frac{1}{2}\int_{0}^{T}\int_{0}^{J}\left(\dot{F}(t,x)\right)^2dxdt\right).
\]

 From (\ref{main system}), we substitute for $\dot{F}$. We get an approximate probability of 
\begin{equation*}
	\exp\left(-\frac{1}{2}\int_{0}^{T}\int_{0}^{J}\left[\partial_{t}^{2}u(t,x)+\partial_{t}u(t,x)-\Delta u(t,x)\right]^2dxdt\right).
\end{equation*}

We have that the minimizer $u$ is often constant in $t$, giving us
\begin{equation*}
	\exp\left(-\frac{T}{2}\int_{0}^{J}\left[\Delta u(t,x)\right]^2dx\right).
\end{equation*}

We might think that the minimizer $u$ has a constant value of $|\Delta u|$. Considering the Neumann boundary conditions, a function could be
\[
u(x)=
\begin{cases}
	ax^2-aJ^2/4\quad & x\in[0,J/2]\\
	a(J-x)^2-aJ^2/4\quad & x\in[J/2,J].
\end{cases}
\]

Taking $u(0)=R$ and $u(J)=-R$, we get $a=4R/J^2$ and $|u''(x)|=2a=8R/J^2$. Then 
\begin{equation}
	\frac{T}{2}\int_{0}^{J}\left[\Delta u(t,x)\right]^2dx=CTJ\left(\dfrac{R}{J^2}\right)^2=\dfrac{CTR^2}{J^3}.\label{A2}
\end{equation}

Equating (\ref{A1}) and (\ref{A2}), we get 
\begin{equation*}
	R=CJ^{5/3}.
\end{equation*}

\textbf{Acknowledgements}

The author is very grateful to Carl Mueller for his valuable insights and advice. This work was supported by University of Rochester, Mathematics department.

\section{Preliminary}

Let $(\Omega, \mathcal{F}, (\mathcal{F}_t)_{t\geq0}, P)$ be a probaility space where $\mathcal{F}_t$ is the filtration generated by the white noise in time $(W(s))_{s\leq t}$. In other word, we have $\mathcal{F}_t=\sigma\{W(s):s\leq t\}$.\\

Let  $\mathbb{N}=\{0, 1, 2,\dots\}$ and 
$\big(u(t,x)\big)_{t\geq0, x\in\mathbb{R}}$ be a solution of the following wave equation with the colored noise satisfying initial conditions and the Neumann Boundary condition:

\begin{equation}
	\begin{split}
		\partial_{t}^{2}u(t,x)+\partial_{t}u(t,x)&=\Delta u(t,x)+\dot{F}(t,x)\\
		u(0,x)=u_{0}(x), &\quad \partial_{t}u(0,x)=u_{1}(x) \quad (x, t)\in[0,J]\times\mathbb{R}_+\\
		\partial_{x}u(t,0)&=\partial_{x}u(t,J)=0.
	\end{split}\label{main system}
\end{equation}

Heuristically, the Fourier series of noise $F$ is
\begin{equation*}
	\dot{F}(t,x)=\sum_{n\in\mathbb{N}}\gamma_n\dot{W}_n(t)\varphi_n(x)
\end{equation*}
where 
\begin{enumerate}
	\item[$\cdot$] $(W_n)_{n\in\mathbb{N}}$ are independent and identical distributed white noise in time,
	
	\item[$\cdot$]$(\varphi_n) _{n\in\mathbb{Z}}$ is a complete set of eigenfunctions of the Laplacian $\Delta$ satisfying Neumann boundary condition:
	\begin{align*}
		\varphi_n(x)&=c_n\cos\left(\dfrac{n\pi}{J}x\right), n\in\mathbb{Z},\\
		\Delta \varphi_n&=\lambda_n\varphi_n.
	\end{align*}
	with 
	\[
	c_n=
	\begin{cases}
		\sqrt{\frac{2}{J}}, & n\neq0\\
		\sqrt{\frac{1}{J}}, &n=0
	\end{cases}
	\quad\text{and}\quad
	\lambda_n=
	\begin{cases}
		-\left(\frac{n\pi}{J}\right)^2, &n\neq0\\
		0, &n=0.
	\end{cases}
	\]

	\item[$\cdot$]$(\gamma_n)_{n\in\mathbb{N}}$ is a collection of real numbers such that $(\gamma_n^2)_{n\in\mathbb{N}}$ is a decreasing sequence satisfying the following conditions, 
	
	\begin{equation*}
		\sum_{n\in\mathbb{N}}\gamma_n^2<+\infty \quad \text{ and }\quad
		\gamma^2_n\leq \dfrac{c}{n^\alpha}, \forall n\neq0.
	\end{equation*}
	where $c$ and $\alpha$ are positive constants inpendent from $n$.
	
\end{enumerate}

By Theorem 1.1 of  \cite{Nu20},  the fundamental solution of the damped wave equation on $\mathbb{R}$ is 
\begin{equation*}
	G_t^\mathbb{R}(x)=\frac{1}{2}e^{-t/2}\text{sgn}(t)\text{ }I_0\bigg(\frac{1}{2}\sqrt{t^2-x^2}\bigg)\chi_{[-|t|, |t|]}(x),
\end{equation*}
where $I_0$ is the modified Bessel function of the first kind and with parameter 0.\\

If we consider the Neumann Boundary condition, the fundamental solution of the system (\ref{main system}) is
\begin{equation*}
	G_t(x,y)=\sum_{n\in\mathbb{Z}}G_t^\mathbb{R}(x+y-2nJ)+G_t^\mathbb{R}(x-y-(2n+1)J),\quad x, y\in[0,J].
\end{equation*}

Then the mild solution of (\ref{main system}) is
\begin{equation*}
	\begin{split}
		u(t,x)=
		&\int_{0}^{J}\partial_{t}G_t(x,y)u_0(y)dy+\int_{0}^{J}G_t(x,y)\big(\frac{1}{2}u_0(y)+u_1(y)\big)dy\\
		&+\int_{0}^{t}\int_{0}^{J}G_{t-s}(x,y)F(dyds).
	\end{split}
\end{equation*}

According to Theorem 1.3 of \cite{Nu20}, this mild solution is the unique solution in $\mathcal{C}^1(\mathbb{R}, \mathcal{D}'(\mathbb{R}))$.

By Theorem 5.3 of \cite{Nu20} and Young's inequality, we can show that the Fourier series of $u$ converges in $L^2\left([0,J]\times\Omega\right)$. That is
\begin{equation*}
	u(t,x)=\sum_{n\in\mathbb{N}}a_n(t)\varphi_n(x)
\end{equation*}
where 
\begin{equation*}
	a_n(t)=\begin{cases}
		\frac{\gamma_{n}}{w_n}\int_{0}^{t}e^{\frac{-1+w_n}{2}(t-s)}-e^{\frac{-1-w_n}{2}(t-s)}dW_n(s) & n<\frac{J}{2\pi} \\
		\gamma_{n}\int_{0}^{t}e^{-\frac{1}{2}(t-s)}(t-s)dW_n(s) & n=\frac{J}{2\pi} \\
		\frac{\gamma_{n}}{\omega_n}\int_{0}^{t}e^{-\frac{1}{2}(t-s)}\sin(\omega_n(t-s))dW_n(s) & n>\frac{J}{2\pi}
	\end{cases}
\end{equation*}
where $\omega_n=\sqrt{|1-\left(\frac{2n\pi}{J}\right)^2|}$.

Details of getting the expression of $a_n$ are in Appendix  \ref{dampedwaveequationwiththenoiseF}.

Let $m(\cdot)$ be the Lebesgue measure on $\mathbb{R}$. Then we define an occupation measure and a local time as follows,
\begin{equation*}
	\begin{split}
		L_t(A)&=m\{x\in[0,J]: u(t,x)\in A\}\\
		l_t(y)&=\dfrac{L_t(dy)}{dy}.
	\end{split}
\end{equation*}

If $P_{T,J}$ is the original probability measure of $\left(u(t,x)\right)_{t\in[0,T],x\in[0,J]}$, we define the probability measure $Q_{T,J,\beta}$ as follows:

\begin{equation*}
	Q_{T,J,\beta}(A)=\dfrac{1}{Z_{T,J,\beta}}E[\mathcal{E}_{T,J,\beta}\mathbbm{1}_A].
\end{equation*}

Let $E^{P_{T,J}}$ and $E^{Q_{T,J,\beta}}$ be the expectation with respect to $P_{T,J}$ and $Q_{T,J,\beta}$ respectively. We write $E$ for $E^{P_{T,J}}$. Let
\begin{equation*}
	\begin{split}
		\mathcal{E}_{T,J,\beta}&=\exp\left(-\beta\int_{0}^{T}\int_{-\infty}^{\infty}l_t(y)^2dydt\right),\\
		Z_{T,J,\beta}&=E[\mathcal{E}_{T,J,\beta}]=E^{P_{T,J}}[\mathcal{E}_{T,J,\beta}].
	\end{split}
\end{equation*}
where $\beta$ is a positive parameter.

For ease of notation, we will write
\begin{equation*}
	P_T=P_{T,J},\quad P_T=P_{T,J,\beta},\quad \mathcal{E}_T=\mathcal{E}_{T,J,\beta},\quad Z_T=Z_{T,J,\beta}.
\end{equation*}

We define the radius of $\left(u(t,x)\right)_{t\in[0,T],x\in[0J]}$ to be
\begin{equation}
	R(T,J)=\left[\dfrac{1}{TJ}\int_{0}^{T}\int_{0}^{J}\left(u(t,x)-\bar{u}(t)\right)^2dxdt\right]^{1/2} \label{radius_R}
\end{equation}
where $\bar{u}(t)=\frac{1}{J}\int_{0}^{J}u(t,x)dx$.

\begin{thm}\label{th:1}
	\begin{enumerate}
		\item[(1).] When $0<J\leq1$, for all $\beta>0$, there are constants $\epsilon_1$ and $K_1$ not depending on $\beta$ and $J$ such that 
		\begin{equation}
			\underset{T\rightarrow\infty}{\lim}Q_T\left[\epsilon_0J^{5/3}\leq R(T,J)\leq K_1J^{5/3}\right]=1.
		\end{equation}
		\item[(2).] When $J>1$, if $\beta\cdot J^{1/3}\gg1$ and $\beta\geq J^{25/3}$, there are constants $\epsilon_2$ and $K_2$ not depending on $\beta$ and $J$ such that 
		\begin{equation}
			\underset{T\rightarrow\infty}{\lim}Q_T\left[\epsilon_2J^{5/3}\leq R(T,J)\leq K_2J^{5/3}\right]=1.
		\end{equation}
	\end{enumerate}
\end{thm}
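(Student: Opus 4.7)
The plan is to prove both inclusions by the standard Gibbs-measure ratio
\[
Q_T[R(T,J) \in B] = \frac{E[\mathcal{E}_T \mathbbm{1}_{\{R(T,J)\in B\}}]}{Z_T},
\]
obtaining a lower bound on $Z_T$ that matches, up to constants, the upper bound on the numerator for the bad events $B = [0,\epsilon J^{5/3})$ and $B = (KJ^{5/3},\infty)$. The heuristic balance between $(\ref{A1})$ and $(\ref{A2})$ is precisely the exponent the rigorous argument must realize: both $Z_T$ from below and $E[\mathcal{E}_T \mathbbm{1}_{R \leq \epsilon J^{5/3}}]$ from above should behave like $\exp(-c\beta TJ^{-5/3})$, with different prefactors in $\epsilon$.

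To bound $Z_T$ from below I would produce an event on which the trajectory $(u(t,\cdot))_{t\in[0,T]}$ lies in a Gaussian tube around the piecewise parabolic profile of the heuristic (range $\asymp J^{5/3}$, $|u''| \asymp J^{-1/3}$). On such a tube the local time $l_t$ is essentially uniform of height $\asymp J^{-2/3}$ on an interval of length $\asymp J^{5/3}$, so $\int_0^T\!\int l_t^2 \, dy \, dt \asymp TJ^{-5/3}$ and thus $\mathcal{E}_T \geq \exp(-C\beta TJ^{-5/3})$. A Cameron--Martin / tube-probability estimate on the Fourier coefficients $a_n(t)$ (using their explicit Gaussian representation from Section~2) gives a matching lower bound $\exp(-cTJ^{-5/3})$ on the $P_T$-probability of the tube.

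For the small-$R$ bound, on $\{R(T,J) \leq \epsilon J^{5/3}\}$ one establishes a modulus-of-continuity estimate forcing the support of $l_t$ to sit in an interval of length $O(R)$ with overwhelming probability; with $\int l_t \, dy = J$, Cauchy--Schwarz then yields $\int l_t^2(y)\,dy \geq cJ^2/R \geq cJ^{1/3}/\epsilon$, whence $\mathcal{E}_T \leq \exp(-c\beta TJ^{1/3}/\epsilon)$. Dividing by $Z_T$, the bound is negligible once $\epsilon$ is small enough; the required smallness is dictated by the case-split hypotheses and produces $\beta J^{1/3} \gg 1$ and $\beta \geq J^{25/3}$ in part~(2). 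For the large-$R$ bound the trivial $\mathcal{E}_T \leq 1$ reduces matters to $P_T[R \geq KJ^{5/3}]$; expanding $R^2 = (TJ)^{-1}\int_0^T \sum_{n\geq 1} a_n(t)^2 \, dt$ and applying an ergodic / sub-exponential concentration bound to each Gaussian mode (separately in the overdamped, critical and oscillatory regimes) yields a $P_T$-tail decaying fast enough in $KT$ to beat the $Z_T$ lower bound.

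The principal obstacle is the modulus-of-continuity step: unlike the heat-equation case of \cite{MN22}, the damped wave equation gives a spatially less regular solution, so the pointwise control on $u(t,\cdot) - \bar u(t)$ needed to shrink the support of $l_t$ must be coaxed from the Bessel-kernel representation together with the three qualitatively different formulas for $a_n(t)$ in Section~2. A secondary bookkeeping difficulty is to arrange the exponential rates in $Z_T^{-1}$ and in the numerator so that they cancel cleanly; tracking the $J$-powers through these cancellations is what pins down the restrictions $\beta J^{1/3}\gg 1$ and $\beta \geq J^{25/3}$ when $J>1$, whereas for $J\leq 1$ the relevant $J$-powers are all harmless and no restriction on $\beta$ is necessary.
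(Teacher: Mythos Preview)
Your overall architecture---bound $Z_T$ from below, bound the numerator on each bad event from above, and compare exponential rates---is exactly what the paper does, and your treatment of the large-$R$ event (use $\mathcal{E}_T\le 1$, expand $R^2$ in Fourier modes, and do a mode-by-mode exponential tail bound in the overdamped/critical/oscillatory regimes) matches the paper's Section~4 essentially line for line.

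However, two of your ingredients diverge from the paper in ways worth noting.

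\textbf{The $Z_T$ lower bound.} The paper does not use a tube event around the parabolic profile. Instead it applies Girsanov with a drift \emph{only on the first Fourier mode}, $a\varphi_1$, and then Jensen's inequality to get
\[
\log Z_T \;\ge\; -\beta\,\hat E\!\left[\int_0^T\!\!\int l_t^2\,dy\,dt\right]\;-\;\hat E\!\left[\log\frac{d\hat P_T^a}{dP_T}\right].
\]
Under $\hat P^a_T$ the shifted process is stationary in $t$, so the first term reduces to $T$ times a single-time integral that is estimated by writing $\hat E[\int l_t^2\,dy]=\iint \hat g_{x_1,x_2}(0)\,dx_2\,dx_1$ and bounding the Gaussian density via a lower bound on $\sigma(x_1,x_2)^2$. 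The entropy term is computed exactly as $a^2TJ\gamma_1^2/4$. This variational route avoids estimating a tube probability and gives the constants that lead directly to the conditions $\beta\ge J^{25/3}$ and $\beta J^{1/3}\gg 1$ when $J>1$.

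\textbf{The small-$R$ numerator.} You flag a modulus-of-continuity estimate as the ``principal obstacle,'' but the paper does not use one at all. On $\{R(T,J)\le K\}$ two Chebyshev-type lemmas give: (i) $\theta_u(t,J)^2\le 2K^2$ on a time set of measure $\ge T/2$; (ii) for such $t$, the spatial set $\{x:|u(t,x)-\bar u(t)|\le 2K\}$ has measure $>J/2$. Hence $\int_{\bar u-2K}^{\bar u+2K}l_t\,dy\ge J/2$, and Jensen over that interval gives $\int l_t^2\,dy\ge (J/2)^2/(4K)$, so
\[
\int_0^T\!\!\int l_t^2\,dy\,dt\;\ge\;\frac{TJ^2}{32K}.
\]
No pathwise regularity of $u(t,\cdot)$ is needed; the argument is purely measure-theoretic. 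Your route via modulus of continuity would work in principle but is strictly harder and is not how the paper proceeds.

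A small bookkeeping slip: with $l_t$ of height $\asymp J^{-2/3}$ on an interval of length $\asymp J^{5/3}$ one gets $\int l_t^2\,dy\asymp J^{1/3}$, so the relevant rate is $\beta T J^{1/3}$, not $\beta T J^{-5/3}$ as you wrote; this is the scale the paper balances against.
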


\begin{proof}[\textbf{Outline of proof of Theorem }\ref{th:1}]
	We define 
	\begin{equation*}
		\begin{split}
			A^{(1)}_{T,J}:=\left\{R(T,J)<\epsilon_0J^{5/3}\right\} \quad \text{ and } \quad A^{(2)}_{T,J}:=\left\{R(T,J)>KJ^{5/3}\right\} .
		\end{split}
	\end{equation*}
	
	It suffices to show that for $i=1, 2$, 
	\begin{equation*}
		\underset{T\rightarrow\infty}{\lim}Q_T\left(A^{(i)}_{T,J}\right)=0.
	\end{equation*}
\end{proof}

\section{Lower bound } \label{lowerbound}

For $0<J\leq1$, we will show that $Q_T\left(A^{(1)}_{T,J}\right)$ approaches $0$ as $T$ goes to infinity. First,  we need to find a lower bound of $Z_T$.

\subsection{Stationary solution}

We define a measure $\hat{P}_T^{a}$ that adds a drift depending on $x$ to the colored noise. We  add a drift $a\varphi_1(\cdot)$ to the noise $\dot{F}$, where $a$ is a nonzero constant. Recall that $\varphi_1$ is one of the eigenfunctions of the Laplacian operator.\\
Fixing $T>0$, by Theorem 5.1 from \cite{Da78}, we have
\begin{equation*}
	\begin{split}
		\frac{d\hat{P}_T^{a}}{dP_t}=
		&\exp\bigg(\int_{0}^{T}\int_{0}^{J}a\varphi_1(x)F(dxdt)\\
		&-\frac{1}{2}\int_{0}^{T}\int_{0}^{J}\int_{0}^{J}a^2\varphi_1(x)\varphi_1(y)f(x,y)dxdydt\bigg)
	\end{split}
\end{equation*}
where 
$f(x,y)=\sum_{n\in\mathbb{N}}\gamma_n^2\varphi_n(x)\varphi_n(y)$. 

Let $\hat{\E}$ be the expectation with respect to $\hat{P}_T^{(a)}$. By Jensen's inequality
\begin{equation}
	\begin{split}
		\log Z_T 
		&= \log\hat{\E}\bigg[\exp\bigg(-\beta\int_{0}^{T}\int_{-\infty}^{\infty}\ell_t(y)^2dydt-\log\frac{d\hat{P}_T^{a}}{dP_t}\bigg)\bigg]\\
		&\geq-\beta\hat{\E}\bigg[\int_{0}^{T}\int_{-\infty}^{\infty}\ell_t(y)^2dydt\bigg]-\hat{\E}\bigg[\log\frac{d\hat{P}_T^{a}}{dP_t}\bigg].\label{logZT}
	\end{split}
\end{equation}

Recall the Fourier series of $u$ is
\begin{equation*}
	u(t,x)=\sum_{n\in\mathbb{N}}a_n(t)\varphi_{n}(x).
\end{equation*}

By Fubini's theorem, it is not hard to find that
\begin{equation}
	\begin{split}
		\bar{u}(t)
		&=a_0(t)\varphi_0(x).\label{ubaroft}
	\end{split}
\end{equation}

Then we have 
\begin{equation*}
	u(t,x)-\bar{u}(t)=\sum_{n\neq 0}a_n(t)\varphi_{n}(x).
\end{equation*}

Let 
\begin{equation*}
	U(t,x)=u(t,x)-\bar{u}(t).
\end{equation*}

For $n\neq 0$, we consider the process $a_n$ in the future time. We define $\tilde{a}_n$ and $\tilde{U}$ as the following:
\begin{equation}
	\tilde{a}_n(t)=\begin{cases}
		\frac{\gamma_{n}}{w_n}\int_{-\infty}^{t}e^{\frac{-1+w_n}{2}(t-s)}-e^{\frac{-1-w_n}{2}(t-s)}dW_n(s) & J>2\pi n\\
		\gamma_{n}\int_{-\infty}^{t}e^{-\frac{1}{2}(t-s)}(t-s)dW_n(s) & J=2\pi n\\
		\frac{\gamma_{n}}{\omega_n}\int_{-\infty}^{t}e^{-\frac{1}{2}(t-s)}\sin(\omega_n(t-s))dW_n(s) & 0<J<2\pi n.
	\end{cases}\label{a_tilde}
\end{equation}
and $\tilde{U}$ has the same relation with $\tilde{a}_n$ as $U$ with $a_n$. That is
\begin{equation}
	\tilde{U}(t,x)=\sum_{n\in\mathbb{N}_+}\tilde{a}_n(t)\varphi_n(x). \label{U_tilde}
\end{equation}

After taking the drift, let $\hat{a}_n$ and $\hat{U}$ be the expressions corresonding to $\tilde{a}$ and $\tilde{U}$ respectively. So we have 

When $n\neq1$, we have $\hat{a}_n=\tilde{a}_n$.

When $n=1$, 
\begin{equation*}
	\hat{a}_1(t)=\begin{cases}
		\frac{\gamma_{1}}{\omega_1}\int_{-\infty}^{t}e^{\frac{-1+\omega_1}{2}(t-s)}-e^{\frac{-1-\omega_1}{2}(t-s)}\big(dW_1(s)+\dfrac{a}{\gamma_1}ds\big) & J>2\pi \\
		\gamma_{1}\int_{-\infty}^{t}e^{-\frac{1}{2}(t-s)}(t-s)\big(dW_1(s)+\dfrac{a}{\gamma_1}ds\big) & J=2\pi \\
		\frac{\gamma_{1}}{\omega_1}\int_{-\infty}^{t}e^{-\frac{1}{2}(t-s)}\sin(\omega_1(t-s))\big(dW_1(s)+\dfrac{a}{\gamma_1}ds\big) & 0<J<2\pi ,
	\end{cases}
\end{equation*}
and

\begin{equation*}
	\hat{U}(t,x)=\sum_{n\in\mathbb{N}_+}\hat{a}(t)\varphi_n(x).
\end{equation*}

Heuristically, 
\begin{equation*}
	\hat{F}(t.x)(t,x)+a\varphi_1(x)=\gamma_1\left(\dot{W_1}(t)+\dfrac{a}{\gamma_1}\right)\varphi_1(x)+\sum_{n\neq1}\gamma_n\dot{W_n}(t)\varphi_n(x).
\end{equation*}

It is easy to check that for each $n\in\mathbb{N}_+$, $\hat{a}_n$ is weakly stationary.

Since $\{\hat{a}_n\}_{n\in\mathbb{N}_+}$ is jointly Gaussian, $\{\hat{a}_n\}_{n\in\mathbb{N}_+}$ is strong stationary.  Then $\hat{U}(t,\cdot)$ is stationary in time $t$.

Let $g_{t, x_1, x_2}$ be the density function of $\tilde{U}(t, x_1)-\tilde{U}(t, x_2)$ under $P$, and  $\hat{g}$ be the density function of $\hat{U}(t, x_1)-\hat{U}(t, x_2)$ under $\hat{P}^a$.

Since $\hat{U}(t,\cdot)$ is stationary in $t$ and
\begin{equation*}
	\begin{split}
		\hat{U}(t,x_1)-\hat{U}(t,x_2)
		&=\sum_{n\in\mathbb{N_+}}\hat{a}_n(t)\big(\varphi_n(x_1)-\varphi_n(x_2)\big).
	\end{split}
\end{equation*}

We have 
\begin{equation*}
	\hat{g}_{t,x_1, x_2}=\hat{g}_{0,x_1, x_2}, \quad\forall t\in\mathbb{R}.
\end{equation*}
To simplify our notation, we write $\hat{g}_{x_1, x_2}$ instead of $\hat{g}_{0, x_1, x_2}$.

By Lemma 2.5 of \cite{MN22}, we have
\begin{equation*}
	\begin{split}
		\hat{\E}\bigg[\int_{-\infty}^{\infty}l_t(y)^2dy\bigg]
		&=\int_{0}^{J}\int_{0}^{J}\hat{g}_{t, x_1, x_2}(0)dx_2dx_1\\
		&=\int_{0}^{J}\int_{0}^{J}\hat{g}_{ x_1, x_2}(0)dx_2dx_1\\
		&=2\int_{0}^{J}\int_{x_1}^{J}\hat{g}_{x_1, x_2}(0)dx_2dx_1\\
		&=2\int_{0}^{J}\int_{x_1}^{J}g_{x_1, x_2}\bigg(D(x_1,x_2)\bigg)dx_2dx_1\\
		&=C_1\int_{0}^{J}\int_{x_1}^{J}\frac{1}{\sigma(x_1, x_2)}\exp\bigg(-\frac{D(x_1, x_2)^2}{2\sigma(x_1, x_2)^2}\bigg)dx_2dx_1.
	\end{split}
\end{equation*}
where $C_1$ is a constant,  $\sigma$ is the standard deviation of $\tilde{U}(0,x_1)-\tilde{U}(0,x_2)$ and $D(x_1,x_2)$ is the drift term and $D(x_1, x_2)=d(t)\left(\varphi_1(x_1)-\varphi_1(x_2)\right)$. In Appendix \ref{driftterm}, we showed that 
\begin{equation*}
	d(t)=\begin{cases}
		\dfrac{a}{\omega_1}\left(\dfrac{2}{1-\omega_1}-\dfrac{2}{1+\omega_1}\right) & J>2\pi \\
		2a & J=2\pi \\
		\dfrac{4a}{5\omega_1} & 0<J<2\pi .
	\end{cases}
\end{equation*}

Since $\exp\bigg(-\frac{D(x_1, x_2)^2}{2\sigma(x_1, x_2)^2}\bigg)\leq 1$ for all $0\leq x_1\leq x_2\leq J$, in order to get an upper bound of $\hat{\E}\bigg[\int_{-\infty}^{\infty}l_t(y)^2dy\bigg]$, it is enough to find a lower bound of $\sigma$.

\begin{lemma}
	$\sigma(x_1,x_2)^2$ is bounded below by $\tilde{C}\frac{\left|x_1-x_2\right|}{J^2}$ for some constant $\tilde{C}$.
\end{lemma}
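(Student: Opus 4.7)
The plan is to decompose $\sigma(x_1,x_2)^2$ into its Fourier components indexed by $n$ and then extract a lower bound by isolating a band of frequencies on which both the mode variance and the spatial-difference factor are well controlled.

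First, since the white noises $\{W_n\}_{n\geq 1}$ are independent, the Gaussian variables $\{\tilde a_n(0)\}_{n\geq 1}$ are independent as well, so
\[
  \sigma(x_1,x_2)^2 \;=\; \sum_{n\geq 1} \sigma_n^2\bigl(\varphi_n(x_1)-\varphi_n(x_2)\bigr)^2,\qquad \sigma_n^2:=\mathrm{Var}\bigl(\tilde a_n(0)\bigr).
\]
An application of It\^o's isometry to each of the three cases in \eqref{a_tilde} yields a closed form for $\sigma_n^2$. In the underdamped regime $n>J/(2\pi)$, one obtains
\[
  \sigma_n^2 \;=\; \frac{\gamma_n^2}{\omega_n^2}\int_0^\infty e^{-u}\sin^2(\omega_n u)\,du \;=\; \frac{2\gamma_n^2}{1+4\omega_n^2},
\]
and since $\omega_n^2=(2n\pi/J)^2-1\asymp n^2/J^2$ for such $n$, this gives $\sigma_n^2\gtrsim \gamma_n^2 J^2/n^2$; the overdamped and critically-damped regimes yield bounds of at least comparable order.

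Next, the sum-to-product identity $\cos A-\cos B=-2\sin\frac{A+B}{2}\sin\frac{A-B}{2}$ rewrites the spatial factor as
\[
  \bigl(\varphi_n(x_1)-\varphi_n(x_2)\bigr)^2 \;=\; \frac{8}{J}\,\sin^2\!\Bigl(\tfrac{n\pi r}{2J}\Bigr)\,\sin^2\!\Bigl(\tfrac{n\pi s}{2J}\Bigr),
\]
with $r=|x_1-x_2|$ and $s=x_1+x_2$. I would then restrict the sum to the block $I=\{n\in\mathbb{N}_+:n\pi r/(2J)\in[\pi/4,\pi/2]\}\subset[J/(2r),J/r]$, of cardinality $\asymp J/r$, on which $\sin^2(n\pi r/(2J))\geq 1/2$. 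For $n\in I$ one has $n\asymp J/r$, and hence $\sigma_n^2\gtrsim \gamma_n^2 r^2$.

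The main obstacle is the residual factor $\sin^2(n\pi s/(2J))$, which can vanish at individual $n$. The plan is to handle it via a pigeonhole/equidistribution argument: on any run of consecutive integers of length comparable to $|I|$, a positive fraction of $n$ must satisfy $\sin^2(n\pi s/(2J))\geq c$ uniformly in $s\in[r,2J-r]$, since the sequence $\{n\theta\bmod\pi\}$ cannot concentrate on an arbitrarily short arc. Retaining only those $n\in I$ that also satisfy this, using monotonicity of $\gamma_n^2$ to extract $\gamma_{\lceil J/r\rceil}^2$ as a common factor, and combining $|I|\asymp J/r$ with $\sigma_n^2\gtrsim \gamma_n^2 r^2$ and the $8/J$ from the trig identity, yields
\[
  \sigma(x_1,x_2)^2 \;\geq\; \tilde C\,\frac{|x_1-x_2|}{J^2},
\]
with $\tilde C$ depending only on the noise coefficients $(\gamma_n)$. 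If the equidistribution step becomes delicate, a fallback is to first average the lower bound over $s$ and then recover a pointwise estimate via continuity of $s\mapsto\sigma(x_1,x_1+r)^2$ combined with a Chebyshev-type inequality.
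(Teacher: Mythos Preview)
Your decomposition and use of the sum-to-product identity match the paper exactly, so the divergence is entirely in how the factor $\sin^2\bigl(\tfrac{n\pi s}{2J}\bigr)$ is handled. The paper does \emph{not} restrict to a high-frequency window: it keeps the full block $1\le n\le N$ with $N=[2h^{-1}(1-\delta_1)]\approx 2J/r$, lower-bounds the $r$-factor linearly via $\sin(\tfrac{\pi}{2}nh)\ge c\,nh$, and then evaluates $\sum_{n=1}^{N}\sin^2(n\pi x)$ in closed form using the Dirichlet kernel identity. The crucial arithmetic is that $\bigl|\sin((2N+1)\pi x)/\sin(\pi x)\bigr|\le 1/\sin(\pi x)\le \tfrac{2}{\pi h}$, while $2N\approx \tfrac{4}{h}$, so the kernel is bounded by roughly $\tfrac{1}{\pi}\cdot 2N<2N$; this is why the sum is $\gtrsim N$. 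The factor $\pi>1$ coming from the length of the block is what makes the estimate close.

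Your window $I\subset[J/(2r),J/r]$ has length only $\approx J/(2r)$, a quarter of the paper's. Running the same averaging,
\[
\sum_{n\in I}\sin^2(n\theta)=\tfrac{|I|}{2}-\tfrac12\mathrm{Re}\sum_{n\in I}e^{2in\theta},\qquad \Bigl|\sum_{n\in I}e^{2in\theta}\Bigr|\le\frac{1}{|\sin\theta|},
\]
and the worst case is $s=r$ (one endpoint at $0$) or $s=2J-r$, where $|\sin\theta|\approx r/J$, hence $1/|\sin\theta|\approx J/r\approx 2|I|$. The error term then dominates the main term, so the averaging bound gives nothing; the informal pigeonhole statement that ``$\{n\theta\bmod\pi\}$ cannot concentrate on a short arc'' fails precisely here, because the arc traced out has length only $\pi/4$ and the spacing $\theta$ can be as large as $\pi/2$ after reduction. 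Your fallback of averaging in $s$ and then invoking Chebyshev yields only a bound for \emph{most} $s$, not a pointwise lower bound, so it does not close the gap either. The simplest repair is to enlarge the block to start at $n=1$ and use the Dirichlet kernel identity as in the paper; the point is that one can afford the weaker bound $\sin^2(\tfrac{\pi}{2}nh)\ge c^2h^2$ on the $r$-factor in exchange for a block long enough to control the $s$-sum.

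A second, independent issue: your final bookkeeping produces $\sigma^2\gtrsim \tfrac{1}{J}\cdot|I|\cdot\gamma_{\lceil J/r\rceil}^2\,r^2\asymp \gamma_{\lceil J/r\rceil}^2\,r$, not $\tilde C\,r/J^2$. Since $\gamma_n^2\to 0$, the implied constant degenerates as $r\to 0$, contradicting your claim that $\tilde C$ depends only on $(\gamma_n)$. (The paper's argument hides an analogous dependence inside $m_1=\min_{n\le N}c_n$, so this is a shared defect of the lemma rather than of your route specifically; but it is worth flagging that your arithmetic does not give the stated form.)
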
\label{lowerboundlemma3.1}
Proof of Lemma \ref{lowerboundlemma3.1}  is in Appendix \ref{lowerboundofsigmax1x2square}.

Now we go back to $\hat{\E}\bigg[\int_{-\infty}^{\infty}l_t(y)^2dy\bigg]$. 

\subsubsection{for $0<J\leq 1$}

\begin{equation*}
	\begin{split}
		\hat{\E}\bigg[\int_{-\infty}^{\infty}l_t(y)^2dy\bigg]
		&=2\int_{0}^{J}\int_{x_1}^{J}g_{x_1, x_2}\bigg(D(x_1, x_2)\bigg)dx_2dx_1\\
		&\leq C_1\int_{0}^{J}\int_{x_1}^{J}\frac{1}{\sigma(x_1, x_2)}\exp\bigg(-\frac{D(x_1, x_2)^2}{\sigma(x_1, x_2)^2}\bigg)dx_2dx_1\\
		\leq C_1\mathcal{I}_1+C_1\mathcal{I}_2
	\end{split}
\end{equation*}
where 
\begin{equation*}
	\mathcal{I}_1=\int_0^J\int_{x_1}^J\frac{1}{\sigma(x_1, x_2)}\exp\bigg(-\frac{D(x_1, x_2)^2}{\sigma(x_1, x_2)^2}\bigg)\mathbbm{1}_{\{x_1+x_2\leq J\}}dx_2dx_1
\end{equation*}
and 
\begin{equation*}
	\mathcal{I}_2=\int_0^J\int_{x_1}^J\frac{1}{\sigma(x_1, x_2)}\exp\bigg(-\frac{D(x_1, x_2)^2}{\sigma(x_1, x_2)^2}\bigg)\mathbbm{1}_{\{x_1+x_2\geq J\}}dx_2dx_1.
\end{equation*}

Then 
\begin{equation*}
	\begin{split}
		\mathcal{I}_1
		&\leq C_1\int_0^J\int_{x_1}^J\frac{J}{\sqrt{\left|x_2-x_1\right|}}\mathbbm{1}_{\{x_1+x_2\leq J\}}dx_2dx_1\\
		&\leq C_1J\int_0^{J}\int_{0}^{J-x_1}\frac{1}{\sqrt{p}}dpdx_1\\
		&=\dfrac{4}{3}C_1J^{\frac{5}{2}}\\
		&=C_2J^{\frac{5}{2}}.
	\end{split}
\end{equation*}
where $C_2=\frac{4}{3}C_1$. In the second inequality, we take $p=x_2-x_1$. 

Now we work with $\mathcal{I}_2$.

Since in this case, $x_2\geq\min\{x_1, J-x_1\}$, we have the following
\begin{equation*}
	\begin{split}
		\mathcal{I}_2
		\leq \int_0^J\int_{x_1}^J\frac{J}{\sqrt{\left|x_2-x_1\right|}}dx_2dx_1.
	\end{split}
\end{equation*}

Let $p=x_2-x_1$, then we have
\begin{equation*}
	\begin{split}
		\int_0^J\int_{x_1}^J\frac{J}{\sqrt{\left|x_2-x_1\right|}}dx_2dx_1
		&\leq C_1J\int_0^{J}\int_{0}^{J-x_1}\frac{1}{\sqrt{p}}dpdx_1\\
		&=C_2J^\frac{5}{2}.
	\end{split}
\end{equation*}

We have 
\begin{equation*}
	\hat{\E}\bigg[\int_{-\infty}^{\infty}l_t(y)^2dy\bigg]\leq C_3J^\frac{5}{2}
\end{equation*}
where $C_3=2C_2$ is a constant.

\newpage
\subsubsection{for $J>1$}

Starting on page 7

\begin{equation*}
	(\ast)
	:=
	\hat{E}\left[\int_{-\infty}^{+\infty}l_t(y)^2dy\right]
	=
	C_1\int_{0}^{J}\int_{x_1}^{J}
	\frac{1}{\sigma(x_1, x_2)}
	\exp\left(-\frac{D(x_1, x_2)^2}{2\sigma^2(x_1, x_2)}\right)
	dx_2dx_1,
\end{equation*}
where $C_1=\frac{1}{\sqrt{2\pi}}$.

By Lemma 3.1, let $C_2=C_1\cdot\tilde{C}^{-\frac{1}{2}}$, we have
\begin{equation*}
	(\ast)
	\leq
	C_2\int_{0}^{J}\int_{x_1}^{J}
	\frac{J}{\sqrt{|x_2-x_1|}}
	\exp\left(-\frac{D(x_1, x_2)^2J^2}{32\tilde{C}}\right)
	dx_2dx_1.
\end{equation*}

Since $d(t)=C'a$ where$C'=\begin{cases}
\frac{1}{\omega_1}\left(\frac{2}{1-\omega_1}-\frac{2}{1+\omega_1}\right)
&\quad J>2\pi;\\
2
&\quad J=2\pi;\\
\frac{4}{5\omega_1}
&\quad 0<J<2\pi
\end{cases}$ from (\ref{C'driftconstant}), 
let $C_3=\frac{(C')^2}{32\tilde{C}}$, then
\begin{equation*}
	(\ast)
	\leq
	C_2J\int_{0}^{J}\int_{x_1}^{J}
	\frac{J}{\sqrt{|x_2-x_1|}}
	\exp\left(-C_3a^2J^2
	\left(\varphi_1(x_1)-\varphi_1(x_2)\right)^2
	\right)
	dx_2dx_1.
\end{equation*}

Recall $\varphi_1(x)=\sqrt{\frac{2}{J}}\cos\left(\frac{\pi}{J}x\right)$, let $C_4=2C_3$, 
\begin{equation*}
	(\ast)
	\leq
	C_2J\int_{0}^{J}\int_{x_1}^{J}
	\frac{J}{\sqrt{|x_2-x_1|}}
	\exp\left(-C_3a^2J
	\left(\cos\left(\frac{\pi}{J}x_1\right)-\cos\left(\frac{\pi}{J}x_2\right)\right)^2
	\right)
	dx_2dx_1.
\end{equation*}

By the proof of Lemma 3.1, there is a constant $C_5$ such that
\begin{equation*}
	\left(\cos\left(\frac{\pi}{J}x_1\right)-\cos\left(\frac{\pi}{J}x_2\right)\right)^2
	\geq C_5\frac{|x_1-x_2|^2}{J^2}.
\end{equation*}

Let $C_6=C_4\cdot C_5$, we get
\begin{equation*}
	(\ast)
	\leq
	C_2J\int_{0}^{J}\int_{x_1}^{J}
	\frac{J}{\sqrt{|x_2-x_1|}}
	\exp\left(-C_6\frac{a^2}{J}(x_2-x_1)^2\right)
	dx_2dx_1.
\end{equation*}

Let $p=x_2-x_1$, then 
\begin{equation*}
	(\ast)
	\leq
	C_2J\int_{0}^{J}\int_{0}^{J}
	\frac{1}{\sqrt{p}}\exp\left(-C_6\frac{a^2}{J}p^2\right)dpdx_1
\end{equation*}

Let $A=C_6\frac{a^2}{J}$, then by change of variable, we define $y=Ap^2$, and get
$p=\left(\frac{y}{A}\right)^{\frac{1}{2}}$ and $dp=A^{-\frac{1}{2}}\cdot\frac{1}{2}y^{-\frac{1}{2}}dy$.
Then
\begin{align*}
	(\ast)
	&\leq
	C_2J\int_{0}^{J}\int_{0}^{AJ^2}
	\frac{A^{1/4}}{y^{1/4}}\exp(-y)A^{-\frac{1}{2}}\cdot\frac{1}{2}y^{-\frac{1}{2}}dydx_1.
\end{align*}

Let $C_7=\frac{1}{2}C_2$, then we have
\[
(\ast)\leq
C_7J^2A^{-\frac{1}{4}}\gamma\left(\frac{1}{4}, AJ^2\right),
\]
where $\gamma(s,z)$ is the incomplete gamma function. 

An expansion of $\gamma(s,z)$ is 
\[
\gamma(s,z)=\frac{z^s}{s}\mathcal{M}(s,s+1,-z)
\]
where $\mathcal{M}(a,b,z)$ is the Kummer's confluent hypergeometric function. Then

\[
(\ast)
\leq
C_7J^2A^{-\frac{1}{4}}
\left(4\left(AJ^2\right)^{1/4}\mathcal{M}\left(\frac{1}{4},\frac{5}{4},-AJ^2\right)\right).
\]

For large $|z|$, we have
\[
\mathcal{M}(a,b,z)\sim
\Gamma(b)\left(\frac{e^z\cdot z^{a-b}}{\Gamma(a)}+\frac{(-z)^{(-a)}}{\Gamma(b-a)}\right).
\]

For $\left\lvert AJ^2\right\rvert$ large enough, 
\[
\mathcal{M}\left(\frac{1}{4},\frac{5}{4},-AJ^2\right)\sim
\Gamma\left(\frac{5}{4}\right)
\left(
\frac{e^{-AJ^2}\left(-AJ^2\right)^{-1}}{\Gamma\left(\frac{1}{4}\right)}
+
\frac{\left(AJ^2\right)^{-\frac{1}{4}}}{\Gamma(1)}
\right)
\]

and 

\[
(\ast)
\leq
\frac{4C_7J^2\Gamma\left(\frac{5}{4}\right)}{\Gamma(1)}A^{-\frac{1}{4}}
=
C_8\frac{J^{9/4}}{a^{1/2}}
\]\label{Jgeq1}
where $C_8=\frac{4C_6C_7\Gamma(\frac{5}{4})}{\Gamma(1)}.$

Since 
\begin{equation*}
	\hat{E}\bigg[\int_0^T\int_{-\infty}^{\infty}l_t(y)^2dydt\bigg]=\hat{E}\bigg[\int_0^T\int_{-\infty}^{\infty}l_0(y)^2dydt\bigg]=T\hat{E}\bigg[\int_{-\infty}^{\infty}l_0(y)^2dy\bigg],
\end{equation*}
back to inequality (\ref{logZT}), we have
\begin{equation*}
	\log Z_T\geq 
	-\beta T\hat{E}\bigg[\int_{-\infty}^{\infty}l_0(y)^2dy\bigg]
	-\hat{E}\bigg[\log\frac{d\hat{P}_T^{a}}{dP_T}\bigg].
\end{equation*}

Now we work on $\hat{E}\bigg[\log\frac{d\hat{P}_T^{a}}{dP_T}\bigg]$.

By Theorem 5.1 of \cite{Da78}, we have
\begin{equation*}
	\frac{d\hat{P}_T^{a}}{dP_T}=\exp\bigg(\int_0^T\int_0^Ja\varphi_1(x)F(dxdt)-\frac{1}{2}\int_0^T\int_0^J\int_0^Ja^2\varphi_1(x)\varphi_1(y)f(x,y)dxdydt\bigg)
\end{equation*}
where $f(x,y)=\sum_{n=0}^{\infty}\gamma_n^2\varphi_n(x)\varphi_n(y)$.

Then
\begin{equation*}
	\hat{\E}\bigg[\log\frac{d\hat{P}_T^{a}}{dP_T} \bigg]
	=\hat{\E}\bigg[\int_0^T\int_0^Ja\varphi_1(x)F(dxdt)\bigg]
	-\frac{1}{2}\int_0^T\int_0^J\int_0^Ja^2\varphi_1(x)\varphi_1(y)f(x-y)dxdydt.
\end{equation*}

Let 
\begin{equation*}
	\begin{split}
		\hat{\zeta}(T,a)&=\exp\bigg(\frac{1}{2}\int_0^T\int_0^J\int_0^Ja^2\varphi_1(x)\varphi_1(y)f(x-y)dxdydt\bigg),\\
		F(T, a\varphi_1)&=\int_0^T\int_0^Ja\varphi_1(x)F(dxdt).
	\end{split}
\end{equation*}

Then 
\begin{equation}
	\begin{split}
		\hat{\E}\bigg[\log\frac{d\hat{P}_T^{a}}{dP_T} \bigg]
		&=\hat{\E}\big[F(T, a\varphi_1)\big]-\log\hat{\zeta}(T,a)\\
		&=\E\big[F(T,a\varphi_1)\frac{d\hat{P}_T^{a}}{dP_T}\big]-\log\hat{\zeta}(T,a)\\
		&=\frac{a}{\hat{\zeta}(T,a)}E\big[F(T,\varphi_1)\exp(aF(T,\varphi_1))\big]-\log\hat{\zeta}(T,a)\\
		&=\frac{a}{\hat{\zeta}(T,a)}\frac{d}{da}\E\big[\exp(aF(T,\varphi_1))\big]-\log\hat{\zeta}(T,a).\label{exp(aF)}
	\end{split}
\end{equation}

Now, let $X=F(T,\varphi_1)$ and $\psi(a)=\E[\exp(aX)]$. Then $X$ is normally distributed with mean zero and variance $\sigma_X^2$, where
\begin{equation*}
	\sigma_X^2=\int_0^T\int_0^J\int_0^J\varphi_1(x)\varphi_1(y)f(x-y)dxdydt=\frac{2}{a^2}\log\hat{\zeta}(T,a).
\end{equation*}

Then 
\begin{equation*}
	\begin{split}
		\psi(a)&=\exp\bigg(\frac{a^2}{2}\sigma_X^2\bigg),\\
		\frac{d}{da}\psi(a)&=\sigma_X^2a\exp(\frac{a^2}{2}\sigma_X^2).
	\end{split}
\end{equation*}

We bring these two expressions to (\ref{exp(aF)}), we get
\begin{equation*}
	\begin{split}
		\hat{\E}\big[F(T, a\varphi_1)\big]
		&=\frac{a}{\hat{\zeta}(T,a)}\sigma_X^2a\exp\bigg(\frac{a^2}{2}\sigma_X^2\bigg)\\
		&=\frac{a^2\exp\bigg(\frac{a^2}{2}\sigma_X^2\bigg)}{\hat{\zeta}(T,a)}\cdot\frac{2}{a^2}\log\hat{\zeta}(T,a)\\
		&=2\exp\bigg(\frac{a^2}{2}\sigma_X^2\bigg)\frac{\log\hat{\zeta}(T,a)}{\hat{\zeta}(T,a)},
	\end{split}
\end{equation*}
and 
\begin{equation*}
	\begin{split}
		\hat{\E}\bigg[\log\frac{d\hat{P}_T^{a}}{dP_T} \bigg]
		&=\hat{\E}\big[F(T, a\varphi_1)\big]-\log\hat{\zeta}(T,a)\\
		&=2\exp\bigg(\frac{a^2}{2}\sigma_X^2\bigg)\frac{\log\hat{\zeta}(T,a)}{\hat{\zeta}(T,a)}-\log\hat{\zeta}(T,a)\\
		&=2\exp\bigg(\log\hat{\zeta}(T,a)\bigg)\frac{\log\hat{\zeta}(T,a)}{\hat{\zeta}(T,a)}-\log\hat{\zeta}(T,a)\\
		&=\log\hat{\zeta}(T,a).
	\end{split}
\end{equation*}

Since
\begin{equation*}
	\begin{split}
		\log\hat{\zeta}(T,a)
		&=\frac{1}{2}\int_0^T\int_0^J\int_0^Ja^2\varphi_1(x)\varphi_1(y)f(x,y)dxdydt\\
		&=\frac{a^2T}{2}\int_0^J\int_0^J\varphi_1(x)\varphi_1(y)f(x,y)dxdy\\
		&=\frac{a^2TJ\gamma_1^2}{4},
	\end{split}
\end{equation*}

we have
\begin{equation*}
	\hat{\E}\bigg[\log\frac{d\hat{P}_T^a}{dP_T}\bigg]=\frac{a^2TJ\gamma_1^2}{4}.
\end{equation*}

Due to the inequality (\ref{logZT}), we get
\begin{equation}
	\underset{T\rightarrow\infty}{\liminf}\frac{1}{T}\log Z_T
	\geq
	-C_3\beta J^\frac{5}{2}-\frac{a^2J\gamma_1^2}{4}.\label{1/TlogZT}
\end{equation}

\subsection{Estimate}

Recall
\begin{equation*}
	\bar{u}(t)=\frac{1}{J}\int_{0}^{J}u(t,x)dx.
\end{equation*}

In	\cite{MN22}, $\theta_u$ and $R_\varphi$ are defined as the following
\begin{equation*}
	\begin{split}
		\theta_u(t,J)&:=\bigg[\frac{1}{J}\int_{0}^{J}\big(u(t,x)-\bar{u}(t)\big)^2dx\bigg]^{1/2}, \quad 0\leq t\leq T,\\
		R_\varphi(T,J)&=\bigg(\frac{1}{T}\int_{0}^{T}\theta_\varphi(t,J)^2dt\bigg)^{1/2}.
	\end{split}
\end{equation*}

We define the event
\begin{equation*}
	A=A_{K,T,J}=\{R(T,J)\leq K\}.
\end{equation*}

\begin{lemma}
	On the set $A$, we have
	\begin{equation*}
		\left|\{t\in[0,T]:\theta_u^2(t,J)\leq2K^2\}\right|\geq\frac{T}{2}.
	\end{equation*}
\end{lemma}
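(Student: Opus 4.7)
The plan is to recognize this as a one-line Chebyshev/Markov-type estimate once the definitions are unpacked. First I would observe that by combining the definitions of $R(T,J)$ in (\ref{radius_R}) and of $\theta_u(t,J)$ given just above the lemma, we have the identity
\begin{equation*}
R(T,J)^2 = \frac{1}{T}\int_{0}^{T}\theta_u^2(t,J)\, dt,
\end{equation*}
since the spatial integral $\frac{1}{J}\int_0^J (u(t,x)-\bar u(t))^2\,dx$ appearing in $R^2$ is precisely $\theta_u^2(t,J)$.

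Next, on the event $A = \{R(T,J)\le K\}$, this identity gives $\int_0^T \theta_u^2(t,J)\,dt \le K^2 T$. Setting $B = \{t\in[0,T]:\theta_u^2(t,J) > 2K^2\}$, Markov's inequality applied to Lebesgue measure on $[0,T]$ yields
\begin{equation*}
2K^2\,|B| \;\le\; \int_{B}\theta_u^2(t,J)\,dt \;\le\; \int_{0}^{T}\theta_u^2(t,J)\,dt \;\le\; K^2 T,
\end{equation*}
so $|B|\le T/2$. Taking complements inside $[0,T]$, the set $\{t\in[0,T]:\theta_u^2(t,J)\le 2K^2\}$ has measure at least $T/2$, which is the claim.

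There is no real obstacle here; the only thing to be careful about is not confusing the normalizations (the $1/T$ and $1/J$ factors in the definitions of $R$ and $\theta_u$) so that the identity in the first step is clean. The lemma is essentially a deterministic pigeonhole statement that will later be combined with stochastic estimates to control $Q_T(A^{(1)}_{T,J})$.
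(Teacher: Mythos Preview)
Your proof is correct and is essentially the same argument as the paper's: both reduce to the inequality $2K^2\,|\{t:\theta_u^2>2K^2\}|\le \int_0^T\theta_u^2\,dt\le K^2T$ on $A$. The only cosmetic difference is that the paper phrases it as a proof by contradiction rather than invoking Markov's inequality directly.
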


\begin{proof}
	We prove this by contradiction.\\
	Suppose on $A$,
	\begin{equation*}
		\left|\{t\in[0,T]:\theta_u^2(t,J)>2K^2\}\right|\geq\frac{T}{2}.
	\end{equation*}
	
	Then 
	\begin{equation*}
		\begin{split}
			\int_{0}^{T}\theta_u^2(t,J)dt
			&>\int_{0}^{T}\theta_u^2(t,J)\mathbbm{1}_{\{\theta_u^2(t,J)>2K^2\}}dt\\
			&>2K^2\cdot\frac{T}{2}\\
			&=K^2T.
		\end{split}
	\end{equation*}
	
	But 
	\begin{equation*}
		R(T,J)\leq K
	\end{equation*}
	is equivalent to 
	\begin{equation*}
		\frac{1}{T}\int_{0}^{T}\theta_u^2(t,J)dt\leq K^2,
	\end{equation*}
	and it is equivalent to
	\begin{equation*}
		\int_{0}^{T}\theta_u^2(t,J)dt\leq K^2T.
	\end{equation*}
	We have the contradiction.
\end{proof}

\begin{lemma}
	If $\theta_u(t,J)^2\leq 2K^2$, we have 
	\begin{equation*}
		\left|\{x\in[0, J]: u(t,x)\in[\bar{u}(t)-2K, \bar{u}(t)+2K]\}\right|>\frac{J}{2}.
	\end{equation*}
\end{lemma}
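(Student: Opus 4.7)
The plan is to mimic the contradiction argument used in the preceding lemma, applying a Chebyshev-type bound on the spatial integral of $(u(t,x) - \bar{u}(t))^2$. Unfolding the definition of $\theta_u$, the hypothesis $\theta_u(t,J)^2 \leq 2K^2$ means exactly that
\[
\int_0^J \bigl(u(t,x) - \bar{u}(t)\bigr)^2 dx \leq 2K^2 J.
\]
So the total $L^2$ deviation of $u(t,\cdot)$ from its spatial mean is controlled.

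Next, I would introduce the bad set $B = \{x \in [0,J] : |u(t,x) - \bar{u}(t)| > 2K\}$, which is the complement of the set appearing in the statement. The goal becomes showing $|B| < J/2$. I would argue by contradiction: assume $|B| \geq J/2$. Then, restricting the integral to $B$ and using that the integrand exceeds $4K^2$ pointwise on $B$,
\[
\int_0^J \bigl(u(t,x) - \bar{u}(t)\bigr)^2 dx \geq \int_B \bigl(u(t,x) - \bar{u}(t)\bigr)^2 dx > 4K^2 \cdot |B| \geq 4K^2 \cdot \frac{J}{2} = 2K^2 J,
\]
which contradicts the hypothesis. Hence $|B| < J/2$, so $|B^c| > J/2$, which is the desired conclusion.

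There is really no obstacle here; the statement is a direct Markov/Chebyshev inequality adapted to the one-dimensional integral, entirely parallel in spirit to the preceding lemma (which applies the same device to the time variable with $\theta_u^2$ in place of $(u - \bar{u})^2$). The only thing to be careful about is that the bound on $B$ is strict, matching the strict inequality in the conclusion $|B^c| > J/2$, which is preserved because $|u - \bar{u}| > 2K$ strictly on $B$.
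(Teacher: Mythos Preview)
Your proof is correct and is essentially identical to the paper's: both argue by contradiction, assume the bad set $B=\{x:|u(t,x)-\bar u(t)|>2K\}$ has measure at least $J/2$, and bound $\int_0^J(u-\bar u)^2\,dx$ from below by $4K^2\cdot|B|\ge 2K^2J$, contradicting $\theta_u(t,J)^2\le 2K^2$. If anything, you are slightly more careful than the paper in tracking why the inequality is strict.
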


\begin{proof}
	We prove this by contradiction.
	Suppose when $\theta_u(t,J)^2\leq 2K^2$, then
	\begin{equation*}
		\left|\{x\in[0, J]: u(t,x)\in[\bar{u}(t)-2K, \bar{u}(t)+2K]\}\right|\leq\frac{J}{2}.
	\end{equation*}
	We also have 
	\begin{equation*}
		\left|\{x\in[0, J]: \left|u(t,x)-\bar{u}(t)\right|>2K\}\right|>\frac{J}{2}.
	\end{equation*}
	
	Then we get
	\begin{equation*}
		\int_{0}^{J}\big(u(t,x)-\bar{u}(t)\big)^2dx>(2K)^2\cdot\frac{J}{2}=2K^2J.
	\end{equation*}
	
	But by definition of $\theta_u$, if $\theta_u^2(t,J)\leq 2K^2$, we have 
	\begin{equation*}
		\frac{1}{J}\int_{0}^{J}\big(u(t,x)-\bar{u}(t)\big)^2dx\leq2K^2.
	\end{equation*}
	We have
	\begin{equation*}
		\int_{0}^{J}\big(u(t,x)-\bar{u}(t)\big)^2dx\leq2K^2J.
	\end{equation*}
	This is a contradiction.
\end{proof}

Let $d_t^{\pm}=\bar{u}(t)\pm2K$, then $d_t^+-d_t^-=4K$. Recall
\begin{equation*}
	\begin{split}
		L_t(A)&=m\{x\in[0,J]:u(t,x)\in A\},\\
		l_t^u(y)&=L_t(dy)/dy.
	\end{split}
\end{equation*}

Then
\begin{equation*}
	\begin{split}
		\bigg|&\left\{t\in[0,T]:\int_{d_k^-}^{d_k^+}l_t^u(x)dx\geq\frac{J}{2}\right\}\bigg|
		=\left|\left\{t\in[0,T]:L_t\big((d_k^-,d_k^+)\big)\geq\frac{J}{2}\right\}\right|\\
		&=\left|\left\{t\in[0,T]:\left|\{x\in[0, J]: u(t,x)\in[\bar{u}(t)-2K, \bar{u}(t)+2K]\}\right|>\frac{J}{2}\right\}\right| \\
		&\geq\frac{T}{2}.
	\end{split}
\end{equation*}

Then we get
\begin{equation*}
	\begin{split}
		\int_{0}^{T}\int_{-\infty}^{+\infty}l_t^u(y)^2dydt
		&\geq 4K\int_{-\infty}^{+\infty}\bigg(\int_{d_k^-}^{d_k^+}l_t^u(y)^2\frac{dy}{4K}\bigg)dt\\
		&\geq 4K\int_{-\infty}^{+\infty}\bigg(\int_{d_k^-}^{d_k^+}l_t^u(y)\frac{dy}{4K}\bigg)^2dt\\
		&\geq \frac{TJ^2}{32K}.
	\end{split}
\end{equation*}
The third inequality is due to Jensen's inequality.

We let $K=\epsilon_0J^{5/3}$ where $C$ is a positive constant, then 
\begin{equation*}
	\int_{0}^{T}\int_{-\infty}^{+\infty}l_t^u(y)^2dydt\geq \frac{TJ^2}{32\epsilon_0J^{5/3}}.
\end{equation*}

Recall the definitions
\begin{equation*}
	\begin{split}
		\mathcal{E}_{T, J, \beta}&=\exp\bigg(-\beta\int_{0}^{T}\int_{-\infty}^{+\infty}l_t^u(y)^2dydt\bigg),\\
		Z_{T, J, \beta}&=\E\big[\mathcal{E}_{T,J,\beta}\big],\\
		Q_{T,J,\beta}(A)&=\frac{1}{Z_{T, J, \beta}}\E\big[\mathcal{E}_{T,J,\beta}\mathbbm{1}_A\big].
	\end{split}
\end{equation*}

Then we get

\begin{equation*}
	\begin{split}
		\E\big[\mathcal{E}_{T,J,\beta}&\mathbbm{1}_{\{R(T,J)<\epsilon_0J^{5/3}\}}\big]\\
		&=\exp\bigg(-\beta\int_{0}^{T}\int_{-\infty}^{+\infty}l_t^u(y)^2\mathbbm{1}_{\{R(T,J)<\epsilon_0J^{5/3}\}}dydt\bigg)\\
		&\leq\exp\bigg(-\beta\frac{TJ^2}{32\epsilon_0J^{5/3}}\bigg).
	\end{split}
\end{equation*}

By (\ref{1/TlogZT}), we have
\begin{equation*}
	\begin{split}
		\underset{T\rightarrow\infty}{\lim}\frac{1}{T}\log Q_T(A_{T,J}^{(1)})
		&\leq\underset{T\rightarrow\infty}{\lim}\frac{1}{T}\log\E\big[\mathcal{E}_{T,J,\beta}\mathbbm{1}_{\{R(T,J)<\epsilon_0J^{5/3}\}}\big]-\underset{T\rightarrow\infty}{\liminf}\frac{1}{T}\log Z_T\\
		&\leq\underset{T\rightarrow\infty}{\lim}\frac{1}{T}\bigg(-\beta\frac{TJ^2}{32\epsilon_0J^{5/3}}\bigg)+C_3\beta J^{\frac{5}{2}}+\frac{a^2J\gamma_1^2}{4}\\
		&=-\beta\frac{J^\frac{1}{3}}{32\epsilon_0}+C_3\beta J^{\frac{5}{2}}+\frac{a^2J\gamma_1^2}{4}.
	\end{split}
\end{equation*}

When $0<J\leq1$, let $a^2=\beta$, then 
\begin{align*}
	\underset{T\rightarrow\infty}{\lim}\frac{1}{T}\log Q_T(A_{T,J}^{(1)})
	&\leq-\beta\frac{J^\frac{1}{3}}{32\epsilon_0}+C_3\beta J^{\frac{5}{2}}+\frac{\gamma_1^2}{4}\beta J.
\end{align*}
We have 
\begin{align*}
	\underset{T\rightarrow\infty}{\lim}\frac{1}{T}\log Q_T(A_{T,J}^{(1)})
	&\leq\left(-\dfrac{1}{32\epsilon_0}+C_3+\dfrac{\gamma_1^2}{4}\right)\beta J^{\frac{1}{3}}.
\end{align*}
Hence by choosing $\epsilon_0$ small enough we get the result. 

When $J\geq1$ and $a^2J$ large enough,  by (\ref{Jgeq1}), we have
\[
\underset{T\rightarrow\infty}{\lim}\frac{1}{T}\log Q_T(A_{T,J}^{(1)})
\leq
-\beta\frac{J^\frac{1}{3}}{32\epsilon_0}
+C_8\beta\frac{J^{9/4}}{a^{1/2}}
+\frac{a^2J\gamma_1^2}{4}.
\]

By taking $a^2=\beta J^{-2/3}$, when $\beta\geq J^{\frac{25}{3}}$, we have
\begin{align*}
	\underset{T\rightarrow\infty}{\lim}\frac{1}{T}\log Q_T(A_{T,J}^{(1)})
	&\leq
	-\beta\frac{J^\frac{1}{3}}{32\epsilon_0}
	+
	C_8\beta\frac{J^{29/12}}{\beta^{1/4}}
	+
	\frac{\beta J^{1/3}\gamma_1^2}{4}\\
	&\leq
	\left(
	-\frac{1}{32\epsilon_0}+C_8+\frac{\gamma_1^2}{4}
	\right)
	\beta J^{1/3}.
\end{align*}

By choosing $\epsilon_0$ small enough, we get the result.

\section{Upper bound}

In this section, we will  show $	\underset{T\rightarrow\infty}{\lim}Q_T\left(A^{(i)}_{T,J}\right)=0$.
Since $\mathcal{E}_{T,J,\beta}\leq 1$ and
we have found a lower bound of $\log Z_T$ in section \ref{lowerbound}, it is enough to prove that for $K:=CJ^{5/3}$ where $C$ is a constant independent from $J$, we have $\underset{T\rightarrow+\infty}{\lim}P\left(R(T,J)\geq K\right)^{1/T}=0.$

Recall from (\ref{a_tilde}) and (\ref{U_tilde}),
\begin{equation*}
	\tilde{a}_n(t)=\begin{cases}
		\frac{\gamma_{n}}{w_n}\int_{-\infty}^{t}e^{\frac{-1+w_n}{2}(t-s)}-e^{\frac{-1-w_n}{2}(t-s)}dW_n(s) & J>2\pi n \\
		\gamma_{n}\int_{-\infty}^{t}e^{-\frac{1}{2}(t-s)}(t-s)dW_n(s) & J=2\pi n \\
		\frac{\gamma_{n}}{\omega_n}\int_{-\infty}^{t}e^{-\frac{1}{2}(t-s)}\sin(\omega_n(t-s))dW_n(s) & 0<J<2\pi n.
	\end{cases}
\end{equation*}
and $\tilde{U}(t,x)=\sum_{n\neq 0}\tilde{a}_n(t)\varphi_n(x)$, we define
\begin{equation*}
	S_T^{n}=\int_{0}^{T}\big(\tilde{a}_n(t)\big)^2dt.
\end{equation*}
Then 
\begin{equation*}
	\begin{split}
		R^2(T,J)
		&=\sum_{n=1}^{\infty}\frac{1}{TJ}S_T^{n}.
	\end{split}
\end{equation*}

\begin{prop}
	Let $K=CJ^{5/3}$ where $C$ is a constant independent from $J$, then
	\begin{equation*}
		\underset{T\rightarrow+\infty}{\lim}P\left(R(T,J)\geq K\right)^{1/T}=0.
	\end{equation*}
\end{prop}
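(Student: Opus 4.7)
The plan is to apply the exponential Chebyshev (Chernoff) inequality to the decomposition $R^2(T,J) = \frac{1}{TJ}\sum_{n\geq 1} S_T^n$, exploiting the independence of the Fourier modes $\{\tilde{a}_n\}$ inherited from the independence of the $\{W_n\}$. For any admissible $\lambda > 0$,
\begin{equation*}
P\bigl(R(T,J) \geq K\bigr)
\;=\; P\Bigl(\sum_{n\geq 1} S_T^n \geq TJK^2\Bigr)
\;\leq\; e^{-\lambda TJK^2}\prod_{n\geq 1} E\bigl[\exp(\lambda S_T^n)\bigr].
\end{equation*}

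For each $n$, $\tilde{a}_n$ is a stationary centred Gaussian process (the first coordinate of the 2D Ornstein-Uhlenbeck process for $(\tilde{a}_n, \dot{\tilde{a}}_n)$), and $S_T^n$ is a quadratic functional of it. A Karhunen--Lo\`eve expansion gives
\begin{equation*}
E[\exp(\lambda S_T^n)] \;=\; \prod_k (1 - 2\lambda\mu_k^{(n,T)})^{-1/2},
\end{equation*}
where $\{\mu_k^{(n,T)}\}$ are the eigenvalues of the covariance operator of $\tilde{a}_n$ on $L^2([0,T])$. A Szeg\H{o}/Toeplitz-type bound gives $\sup_k\mu_k^{(n,T)} \leq 2\pi\|f_n\|_\infty$ uniformly in $T$, where $f_n(\omega) = \gamma_n^2\bigl[((n\pi/J)^2 - \omega^2)^2 + \omega^2\bigr]^{-1}$ is the spectral density of $\tilde{a}_n$ read off from its defining SDE; a direct calculation gives $\|f_n\|_\infty \lesssim \gamma_n^2 J^2/n^2$ across the three regimes of \eqref{a_tilde}, so the binding constraint on $\lambda$ comes from $n=1$ and has the form $\lambda \lesssim 1/J^2$. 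On this range, the inequality $-\log(1-x) \leq 2x$ for $x \in [0,1/2]$ yields
\begin{equation*}
\log E[\exp(\lambda S_T^n)] \;\leq\; 2\lambda\sum_k \mu_k^{(n,T)} \;=\; 2\lambda T\,\mathrm{Var}(\tilde{a}_n(0)) \;=\; \tfrac{4\lambda T \gamma_n^2}{1 + 4\omega_n^2},
\end{equation*}
and summing using $1 + 4\omega_n^2 \asymp n^2/J^2$ for large $n$ together with $\sum_n \gamma_n^2 < \infty$ gives $\sum_n \log E[\exp(\lambda S_T^n)] \leq C_0\lambda TJ^2$ for some $C_0$ depending only on $(\gamma_n)$.

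Putting the pieces together and dividing by $T$,
\begin{equation*}
\tfrac{1}{T}\log P\bigl(R(T,J) \geq K\bigr) \;\leq\; -\lambda J\bigl(K^2 - C_0 J\bigr).
\end{equation*}
With $K = CJ^{5/3}$ and $C$ large, $K^2 - C_0 J$ is positive of order $J^{10/3}$ in the relevant range, and choosing $\lambda \asymp 1/J^2$ at the edge of admissibility gives a negative rate of order $-C^2 J^{7/3}$ that can be made as large in absolute value as needed by enlarging $C$, yielding the claim. The main technical obstacle will be making rigorous the uniform-in-$T$ Szeg\H{o}-type eigenvalue estimate, and covering all three regimes (overdamped $n < J/(2\pi)$, critical $n = J/(2\pi)$, underdamped $n > J/(2\pi)$) of \eqref{a_tilde} in one stroke. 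A cleaner alternative is to exploit the 2D Ornstein-Uhlenbeck structure of $(\tilde{a}_n, \dot{\tilde{a}}_n)$ and compute the Laplace transform of $S_T^n$ via the associated matrix Riccati ODE, which produces the same mode-by-mode estimate directly without invoking spectral asymptotics.
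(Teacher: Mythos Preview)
Your approach via the exponential Chebyshev inequality and the spectral density of the stationary modes is genuinely different from the paper's. The paper does not use the Laplace transform of $S_T^n$ at all; instead it writes each $\tilde a_n(t)$ as a functional of time-changed Brownian motion, bounds $\int_0^T e^{-t}\tilde B_{e^t}^2\,dt$ by a block decomposition over unit intervals using the iterated inequality $\tilde B_t\le 3\tilde B_s+\tilde B_{s,t}$ (Lemma~4.1), and then applies Markov's inequality with a \emph{$T$-dependent} parameter $\tau\asymp(8e)^{-T}$. The resulting product of $T-1$ independent factors produces exponents of order $T^2$ (ratios like $7^{T^2}/(8e)^{T^2}$), and it is precisely this $T^2$ that makes the $T$-th root tend to zero.

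Your argument, by contrast, only proves $\limsup_{T\to\infty}P\bigl(R(T,J)\ge K\bigr)^{1/T}\le e^{-c}<1$, not that the limit is zero. The bound
\[
\tfrac{1}{T}\log P\bigl(R(T,J)\ge K\bigr)\;\le\;-\lambda J\bigl(K^{2}-C_{0}J\bigr)
\]
holds for any \emph{fixed} admissible $\lambda$, and your own eigenvalue constraint $2\lambda\,\sup_k\mu_k^{(n,T)}<1$, i.e.\ $\lambda\lesssim 1/\|f_1\|_\infty$, is uniform in $T$, so you cannot send $\lambda\to\infty$ with $T$. Enlarging $C$ improves the finite rate but never makes it infinite for any fixed $C$; the sentence ``can be made as large in absolute value as needed by enlarging $C$, yielding the claim'' conflates a large rate with the super-exponential decay the proposition asserts. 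This is in fact the behaviour one expects from large-deviation theory for time averages of a stationary Gaussian process: $P^{1/T}$ should converge to a strictly positive constant $e^{-I(K)}$. Your weaker conclusion would actually suffice for the main theorem---one only needs the exponential rate of $P(A^{(2)}_{T,J})$ to beat the finite lower bound on $\tfrac1T\log Z_T$, which can be arranged by taking $C$ large---but it does not establish the proposition as written, and to match the paper's statement you would have to abandon the fixed-$\lambda$ Laplace bound and introduce some $T$-dependent mechanism as the paper does.
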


\begin{proof}
	
	Let $c_0=\sum_{n=1}^{+\infty}\frac{1}{n^2}=\frac{6}{\pi^2}$. We have
	\begin{equation*}
		\begin{split}
			P\left(R(T,J)\geq K\right)
			&=P\left(R(T,J)^2\geq K^2\right)\\
			&\leq \sum_{n=1}^{\infty}P\left(\dfrac{1}{TJ}S_T^n>\frac{1}{c_0}K^2n^{-2}\right).
		\end{split}
	\end{equation*}

	We will show the work with three cases: (1) $J<2\pi n$; (2) $J=2\pi n$; and (3) $J>2\pi n$. The proofs from each case are similar. We will omit some details in case (2) and (3).
	
	\textbf{Case (1) When $J<2\pi n$} , 
	
	\textbf{Step 1.} For each $n>\frac{J}{2\pi}$, we will find an upper bound of $P\left(\dfrac{1}{TJ}S_T^n>Kn^{-2}\right).$
	
	Let $A_n(t)=\int_{-\infty}^{t}e^{-\frac{1}{2}(t-s)}\sin(\omega_n(t-s))dW_n(s)$. Then
	\begin{equation*}
		\tilde{a}_n(t)=\frac{\gamma_n}{\omega_n}A_n(t)
	\end{equation*}
	and 
	\begin{equation*}
		\dfrac{1}{TJ}S_T^n=\dfrac{1}{TJ}\bigg(\dfrac{\gamma_n}{\omega_n}\bigg)^2\int_{0}^{T}A_n^2(t)dt.
	\end{equation*}
	
	We have
	\begin{equation*}
		\begin{split}
			A_n(t)
			&=\int_{-\infty}^{t}e^{-\frac{1}{2}(t-s)}\sin(\omega_n(t-s))dW_n(s)\\
			&=e^{-\frac{1}{2}t}\sin(\omega_nt)\int_{-\infty}^{t}e^{\frac{1}{2}s}\cos(\omega_ns)dW_n(s)
			-e^{-\frac{1}{2}t}\cos(\omega_nt)\int_{-\infty}^{t}e^{\frac{1}{2}s}\sin(\omega_ns)dW_n(s).
		\end{split}
	\end{equation*}
	
	Starting with the first integral, $\int_{-\infty}^{t}e^{\frac{1}{2}s}\cos(\omega_ns)dW_n(s)$ is a time-changed two-sided Brownian motion. Since
	\begin{equation*}
		E\big[\big(\int_{-\infty}^{t}e^{\frac{1}{2}s}\cos(\omega_ns)dW_n(s)\big)^2\big]
		=
		\int_{-\infty}^{t}e^{s}\cos^2(\omega_ns)ds
		\leq 
		e^t,
	\end{equation*}
	when $t\geq 0$, we have 
	\begin{equation*}
		\int_{-\infty}^{t}e^{\frac{1}{2}s}\cos(\omega_ns)dW_n(s)\overset{D}{=}B_{\int_{-\infty}^{t}e^{s}\cos^2(\omega_ns)ds}\leq \underset{0\leq r\leq t}{\sup}B_{e^r}
	\end{equation*}
	where $\{B_t\}$ is a standard Brownian motion. 
	
	Similarly, we get
	\begin{equation*}
		\int_{-\infty}^{t}e^{\frac{1}{2}s}\sin(\omega_ns)dW_n(s)\overset{D}{=}B_{\int_{-\infty}^{t}e^{s}\sin^2(\omega_ns)ds}\leq \underset{0\leq r\leq t}{\sup}B_{e^r}.
	\end{equation*}
	
	To ease notation, we define
	\begin{equation*}
		\begin{split}
			B^c_t&:=\int_{-\infty}^{t}e^{\frac{1}{2}s}\cos(\omega_ns)dW_n(s)\\
			B^s_t&:=\int_{-\infty}^{t}e^{\frac{1}{2}s}\sin(\omega_ns)dW_n(s)\\
			\tilde{B}_{T}&:=\underset{0\leq t\leq T}{\sup}|B_t|.
		\end{split}
	\end{equation*}
	
	Let $\lambda$ be a real number, 
	\begin{equation*}
		\begin{split}
			P\bigg(\big(A_n(t)\big)^2>\lambda\bigg)
			&=P\bigg(\bigg(e^{-\frac{1}{2}t}\sin(\omega_nt)\int_{-\infty}^{t}e^{\frac{1}{2}s}\cos(s)dW_n(s)\\
			&\qquad-e^{-\frac{1}{2}t}\cos(\omega_nt)\int_{-\infty}^{t}e^{\frac{1}{2}s}\sin(s)dW_n(s)\bigg)^2>\lambda\bigg)\\
			&\leq P\bigg(2e^{-t}(B^c_t)^2+2e^{-t}(B^s_t)^2>\lambda\bigg)\\
			&\leq P\bigg(16e^{-t}\tilde{B}_{e^t}^2>\lambda\bigg).
		\end{split}
	\end{equation*}
	
	Given $0\leq s\leq t$, 
	\begin{equation}
		\begin{split}
			\tilde{B}_t
			&=\underset{0\leq r\leq t}{\sup}|B_r|=\underset{0\leq r\leq t}{\sup}|B_r-B_s+B_s|\\
			&\leq \underset{0\leq r\leq t}{\sup}|B_r-B_s|+|B_s|\\
			&\leq \underset{ 0\leq r\leq s}{\sup}|B_r-B_s|+\underset{s\leq r\leq t}{\sup}|B_r-B_s|+|B_s|\\
			&\leq 2\underset{0\leq r\leq s}{\sup}|B_r|+\underset{s\leq r\leq t}{\sup}|B_r-B_s|+|B_s|\\
			&\leq 3\underset{0\leq r\leq s}{\sup}|B_r|+\underset{s\leq r\leq t}{\sup}|B_r-B_s|.\label{ineqoftildeB}
		\end{split}
	\end{equation}
	
	Let
	\begin{equation*}
		\tilde{B}_{s,t}:=\underset{s\leq r\leq t}{\sup}|B_r-B_s|.
	\end{equation*}
	
	Note that $\tilde{B}_{s,t}$ is independent from $\mathcal{F}_s$.  In conclusion, we have the following relations
	\begin{equation}
		\tilde{B}_t\leq \tilde{B}_{s,t}+3\tilde{B}_s, \quad \text{and }\quad \tilde{B}_{s,t}\overset{D}{=}\tilde{B}_{t-s}.\label{Btildeupperbound}
	\end{equation}
	
	Let $\tau >0$. By Markov's inequality,
	\begin{equation}
		\begin{split}
			P\bigg(\frac{1}{TJ}S_n>Kn^{-2}\bigg)
			&= P\bigg(\int_{0}^{T}A_n^2(t)dt>\frac{K}{n^2}\big(\frac{\omega_n}{\gamma_n}\big)^2TJ\bigg)\\
			&=P\bigg(e^{\tau\int_{0}^{T}A_n^2(t)dt}>e^{\tau\frac{K}{n^2}(\frac{\omega_n}{\gamma_n})^2TJ}\bigg)\\
			&\leq E\big[e^{\tau\int_{0}^{T}A_n^2(t)dt}\big]\cdot e^{-\tau\frac{K}{n^2}(\frac{\omega_n}{\gamma_n})^2TJ}\\
			&\leq E\big[e^{16\tau\int_{0}^{T}e^{-t}\tilde{B}_{e^t}^2dt}\big]\cdot e^{-\tau\frac{K}{n^2}(\frac{\omega_n}{\gamma_n})^2TJ}.\label{tauMI}
		\end{split} 
	\end{equation}
	
	\textbf{Step 2.} We will find an upper bound of $\int_{0}^{T}e^{-t}\tilde{B}_{e^t}^2dt$.
	
	\begin{lemma}
		Let $T>2$ be a positive integer and  $a,b>0$. Then 
		\begin{align*}
			\int_{0}^{T}e^{-at}\tilde{B}^2_{e^{bt}}dt
			&=
			\int_{0}^{1}e^{-at}\tilde{B}^2_{e^{bt}}dt+\tilde{B}_{e^{b}}^2
			\sum_{k=1}^{T-1}18^k\int_{k}^{k+1}e^{-at}dt\\
			&\quad +\sum_{m=1}^{T-2}
			\bigg[
			\bigg(
			\sum_{k=m+1}^{T-1}2\cdot18^{k-m}\int_{k}^{k+1}
			e^{-at}\tilde{B}^2_{e^{bm},e^{b(m+1)}}dt
			\bigg)\\
			&\hspace{4.5em}
			+2\int_{m}^{m+1}e^{-at}\tilde{B}^2_{e^{bm},e^{bt}}dt
			\bigg].
		\end{align*}
	\end{lemma}\label{upperboundlemma1}
	
	The proof of Lemma 4.1 is in Appendix \ref{proofofintegraleexpbrownianmotion}.
	
	Now, we take $a=b=1$. Then Lemma 4.1 gives
	\begin{equation}
		\begin{split}
			\int_{0}^{T}e^{-t}\tilde{B}^2_{e^{t}}dt
			&=
			\int_{0}^{1}e^{-t}\tilde{B}^2_{e^{t}}dt+
			\tilde{B}_{e}^2\sum_{k=1}^{T-1}18^k\int_{k}^{k+1}e^{-t}dt\\
			&\quad 
			+\sum_{m=1}^{T-2}
			\bigg[\bigg(
			\sum_{k=m+1}^{T-1}2\cdot18^{k-m}\int_{k}^{k+1}
			e^{-t}\tilde{B}^2_{e^{m},e^{(m+1)}}dt
			\bigg)\\
			&\hspace{4.5em}
			+2\int_{m}^{m+1}e^{-t}\tilde{B}^2_{e^{m},e^{t}}dt
			\bigg].
		\end{split}\label{lemma4.1case1}
	\end{equation}
	
	Let 
	\begin{equation*}
		\begin{split}
			(\text{I})&:=\int_{0}^{1}e^{-t}\tilde{B}^2_{e^{t}}dt
			+\tilde{B}_{e}^2\sum_{k=1}^{T-1}18^k\int_{k}^{k+1}e^{-t}dt\\
			(\text{II})&:=\sum_{m=1}^{T-2}\bigg[
			\bigg(\sum_{k=m+1}^{T-1}
			2\cdot 18^{(k-m)}\int_{k}^{k+1}
			e^{-t}\tilde{B}^2_{e^{m},e^{(m+1)}}dt\bigg)\\
			&\qquad\qquad+2\int_{m}^{m+1}e^{-t}\tilde{B}^2_{e^{m},e^{t}}dt
			\bigg].
		\end{split}
	\end{equation*}

	Recall that  $\mathcal{F}_t$ is the filtration generated by $\{B_s | \hspace{0.05cm} s\leq t\}$.
	
	Clearly, $(\text{I})\in\mathcal{F}_e$. And in (I), since $\frac{18}{e}<7$
	\begin{equation*}
			\sum_{k=1}^{T-1}18^k\int_{k}^{k+1}e^{-t}dt\\
			\leq \sum_{k=1}^{T-1}\big(\frac{18}{e}\big)^k\\
			\leq\frac{7^T-7}{6}.
	\end{equation*}
	
	Then we have 
	\begin{equation*}
		(\text{I})\leq \int_{0}^{1}e^{-t}\tilde{B}^2_{e^t}dt+\bigg(\frac{7^T-7}{6}\bigg)\tilde{B}^2_e\leq \int_{0}^{1}e^{-t}\tilde{B}^2_{e^t}dt+7^T\tilde{B}^2_e.
	\end{equation*}
	Since $\tilde{B}_{e^t}\leq \tilde{B}_e, \forall t\in[0,1]$, 
	\begin{equation*}
		(\text{I})\leq \tilde{B}^2_e(1+7^T).
	\end{equation*}
	
	Now we work with (II).
	
	For each $m$ in $\{1,\dots, T-2\}$,
	\begin{equation*}
		\bigg[\bigg(\sum_{k=m+1}^{T-1}
		2\cdot18^{(k-m)}\int_{k}^{k+1}
		e^{-t}
		\tilde{B}^2_{e^m,e^{m+1}}dt\bigg)+2\int_{m}^{m+1}e^{-t}\tilde{B}^2_{e^m,e^t}dt\bigg]\in\mathcal{F}_{e^{m+1}}
	\end{equation*} 
	and 
	\begin{equation*}
		\bigg[\bigg(\sum_{k=m+1}^{T-1}
		2\cdot18^{(k-m)}\int_{k}^{k+1}
		e^{-t}
		\tilde{B}^2_{e^m,e^{m+1}}dt\bigg)+2\int_{m}^{m+1}e^{-t}\tilde{B}^2_{e^m,e^t}dt\bigg]\indep\mathcal{F}_{e^{m}}.
	\end{equation*} 
	
	Also we have
	\begin{equation*}
		\begin{split}
			\sum_{k=m+1}^{T-1}&
			2\cdot18^{(k-m)}\int_{k}^{k+1}e^{-t}\tilde{B}^2_{e^m,e^{m+1}}dt\\
			&=\tilde{B}^2_{e^m,e^{m+1}}\sum_{k=m+1}^{T-1}2\cdot18^{(k-m)}
			\int_{k}^{k+1}e^{-t}dt\\
			&=\tilde{B}^2_{e^m,e^{m+1}}\cdot2\cdot18^{-m}\sum_{k=m+1}^{T-1}18^k\int_{k}^{k+1}e^{-t}dt\\
			&\leq \tilde{B}^2_{e^m,e^{m+1}}\cdot2\cdot18^{-m}\sum_{k=m+1}^{T-1}(\frac{18}{e})^k\\
			&\leq \tilde{B}^2_{e^m,e^{m+1}}\cdot2\cdot18^{-m}\cdot\frac{7^T-7^{m+1}}{6}\\
			&\leq \tilde{B}^2_{e^m,e^{m+1}}\cdot2\cdot18^{-m}\cdot 7^T.
		\end{split}
	\end{equation*}
	
	Let $\alpha_m=2\cdot18^{-m}\cdot 7^T$. 
	
	Recall $\tilde{B}_{s,t}=\underset{s\leq r\leq t}{\sup}|B_r-B_s|$. Then if $t\leq p$, we have $\tilde{B}_{s,t}\leq\tilde{B}_{s,p}$.
	
	Therefore,
	\begin{equation*}
		\begin{split}
			(\text{II})
			&\leq\sum_{m=1}^{T-2}\bigg[\tilde{B}^2_{e^m,e^{m+1}}\cdot\alpha_m+2\int_{m}^{m+1}e^{-t}\tilde{B}^2_{e^m,e^t}dt\bigg]\\
			&\leq\sum_{m=1}^{T-2}\tilde{B}^2_{e^m,e^{m+1}}\big(\alpha_m+2e^{-m}\big).
		\end{split}
	\end{equation*}
	
	Back to (\ref{lemma4.1case1}), we get
	\begin{equation*}
		\begin{split}
			\int_{0}^{T}e^{-t}\tilde{B}^2_{e^t}dt 
			&\leq \tilde{B}^2_e(1+7^T)+\sum_{m=1}^{T-2}\tilde{B}^2_{e^m,e^{m+1}}\big(\alpha_m+2e^{-m}\big).
		\end{split}
	\end{equation*}
	
	\textbf{Step 3.} We will choose a proper value for $\tau$ and find an explicit upper bound of\\ $E\big[e^{16\tau\int_{0}^{T}e^{-t}\tilde{B}_{e^t}^2dt}\big]$.\\

	Back to (\ref{tauMI}), we have 
	\begin{equation}
		\begin{split}
			E\big[&e^{16\tau\int_{0}^{T}e^{-t}\tilde{B}_{e^t}^2dt}\big]
			\leq E\big[e^{16\tau\left(\tilde{B}^2_e(1+7^T)+\sum_{m=1}^{T-2}\tilde{B}^2_{e^m,e^{m+1}}\big(\alpha_m+2e^{-m}\big)\right)}\big]\\
			&=E\big[e^{16\tau\tilde{B}^2_e(1+7^T)}\big]
			\cdot \prod_{m=1}^{T-2}E\big[e^{16\tau\tilde{B}^2_{e^m,e^{m+1}}\big(\alpha_m+2e^{-m}\big)}\big].\label{mterms}
		\end{split}
	\end{equation}
	
	Since $e^{16\tau\tilde{B}^2_e(1+7^T)}$ is a nonnegative random variable, we have
	\begin{equation*}
		\begin{split}
			E\big[e^{16\tau(1+7^T)\tilde{B}_e^2}\big]
			&=\int_{1}^{+\infty}P\big(e^{16\tau(1+7^T)\tilde{B}_e^2}\geq x\big)dx\\
			&=\int_{1}^{+\infty}4P\bigg(B_e\geq \sqrt{\frac{\log x}{16\tau(1+7^T)}}\bigg)dx.
		\end{split}
	\end{equation*}
	
	\begin{lemma}
		Let $X\sim \mathcal{N}(0, \sigma^2)$ and $\gamma>0$ with $1-2\gamma\sigma^2>0$, then 
		\begin{equation}
			\int_{1}^{+\infty}P\left(X\geq\sqrt{\dfrac{\log x}{\gamma}}\right)dx\leq\dfrac{\sigma^2\gamma}{\sqrt{1-2\gamma\sigma^2}}.
		\end{equation}
	\end{lemma}
	
	The proof of the lemma is in Appendix  \ref{integraloftailprobability}.
	
	Let $\gamma=16\tau(1+7^T)$ with $1-2\gamma e>0$. We choose $\tau\in\left(0,\dfrac{1}{32e(1+7^T)}\right)$. Since $B_e\sim\mathcal{N}(0, e)$, by Lemma 4.2, we have 
	\begin{equation*}
		E\big[e^{16\tau(1+7^T)\tilde{B}_e^2}\big]
		\leq4\cdot\dfrac{e\gamma}{\sqrt{1-2\gamma e}}=\dfrac{64e\tau(1+7^T)}{\sqrt{1-32e\tau(1+7^T)}}.
	\end{equation*}

	Now we work on the other factors in (\ref{mterms}). For $m\in\{1, 2, \dots, T-3, T-2\}$, we have
	
	\begin{equation*}
		\begin{split}
			E\Bigg[\exp\bigg(16\tau\tilde{B}^2_{e^m,e^{m+1}}\big(\alpha_m+2e^{-m}\big)\bigg)\Bigg]
			&=\int_{1}^{+\infty}P\big(\exp\big(16\tau\tilde{B}^2_{e^m,e^{m+1}}(\alpha_m+2e^{-m})\big)\geq x\big)dx\\
			&=\int_{1}^{+\infty}4P\bigg(B_{e^{m+1}-e^m}\geq \sqrt{\frac{\log x}{16\tau(\alpha_m+2e^{-m})}}\bigg)dx.
		\end{split}
	\end{equation*}
	
	Let $\gamma_m=16\tau(\alpha_m+2e^{-m})$ and $\sigma_m=e^{m+1}-e^m$.
	
	When $1-2\sigma_m^2\gamma_m>0$, we choose $\tau\in\left(0,\dfrac{1}{32\sigma_m^2(\alpha_m+2e^{-m})}\right)$. By Lemma 4.2, we have
	\begin{equation*}
		\begin{split}
			E\big[\exp\bigg(16\tau\tilde{B}^2_{e^m,e^{m+1}}\big(\alpha_m+2e^{-m}\big)\bigg)\big]
			&\leq 4\cdot \dfrac{\sigma_m^2\gamma_m}{\sqrt{1-2\gamma_m\sigma_m^2}}\\
			&=\dfrac{64\tau(e^{m+1}-e^m)(\alpha_m+2e^{-m})}{\sqrt{1-32\tau(e^{m+1}-e^m)(\alpha_m+2e^{-m})}}.
		\end{split}
	\end{equation*}

	Let $\tau=\frac{1}{32e^T\cdot8^T}$. Then $\tau<\frac{1}{32(1+7^T)e}$ and $\tau<\frac{1}{32\sigma_m^2(\alpha_m+2e^{-m})}$ for all $m$. According to (\ref{mterms}),
	\begin{equation*}
		\begin{split}
			E\big[&e^{16\tau\int_{0}^{T}e^{-t}\tilde{B}_{e^t}^2dt}\big]
			\leq
			\dfrac{64e\tau(1+7^T)}{\sqrt{1-32e\tau(1+7^T)}}
			\cdot\prod_{m=1}^{T-2}
			\dfrac{64\sigma_m^2\tau(\alpha_m+2e^{-m})}{\sqrt{1-32\sigma_m^2\tau(\alpha_m+2e^{-m})}}\\
			&=\dfrac{\frac{2(1+7^T)}{e^{T-1}8^T}}{\sqrt{1-\frac{(1+7^T)}{e^{T-1}8^T}}}\cdot\prod_{m=1}^{T-2}\dfrac{\frac{2\alpha_m\sigma_m^2}{e^T8^T}}{\sqrt{1-\frac{\alpha_m\sigma_m^2}{e^T8^T}}}\\
			&=\dfrac{2(1+7^T)}{\sqrt{e^{T-1}8^T}\sqrt{e^{T-1}8^T-(1+7^T)}}\\
			&\quad\cdot\prod_{m=1}^{T-2}\dfrac{2(2\cdot18^{-m}\cdot 7^T+2e^{-m})(e^{m+1}-e^m)}{\sqrt{e^{T}8^T}\sqrt{e^{T}8^T-(2\cdot18^{-m}\cdot 7^T+2e^{-m})(e^{m+1}-e^m)}}.
		\end{split}
	\end{equation*}
	
	Since $1+7^T<8^T$ and $\forall m\in\{1,\dots, T-2\}$, $(2^{1-m}\cdot9^{-m}\cdot 7^T+2e^{-m})<8^T$, when $T>2$, we have
	\begin{equation*}
		\begin{split}
			E\big[e^{16\tau\int_{0}^{T}e^{-t}\tilde{B}_{e^t}^2dt}\big]
			&\leq\dfrac{2(1+7^T)}{\sqrt{e^{T-1}8^T}\sqrt{e^{T-1}8^T-8^T}}\\
			&\quad\cdot\prod_{m=1}^{T-2}\dfrac{2(2\cdot18^{-m}\cdot 7^T+2e^{-m})e^{m+1}}{\sqrt{e^{T}8^T}\sqrt{e^{T}8^T-8^Te^{T-1}}}\\
			&=\dfrac{2\cdot2^{T-2}(1+7^T)}{8^T\sqrt{e^{2T-2}-e^{T-1}}}\cdot\dfrac{1}{8^{T(T-2)}(\sqrt{e^{2T}-e^{2T-1}})^{(T-2)}}\\
			&\quad\cdot\prod_{m=1}^{T-2}\bigg[\big(2\cdot18^{-m}\cdot e^{m+1}\big)\cdot 7^T+2e\bigg].
		\end{split}
	\end{equation*}
	
	Since $2\cdot18^{-m}\cdot e^{m+1}\leq 1$ and $2e\leq7^{T}$ when $T\geq3$, we have
	\begin{equation*}
		\prod_{m=1}^{T-2}\bigg[\big(2\cdot18^{-m}\cdot e^{m+1}\big)\cdot 7^T+2e\bigg]
		\leq2^T\cdot7^{T(T-2)}=8^{T/3}\cdot7^{T(T-2)}.
	\end{equation*}
	
	Then we have 
	\begin{equation*}
		\begin{split}
			E\big[e^{16\tau\int_{0}^{T}e^{-t}\tilde{B}_{e^t}^2dt}\big]
			&\leq \dfrac{2\cdot2^{T-2}(1+7^T)8^{T/3}\cdot7^{T(T-2)}}{8^{T^2-T}\cdot\sqrt{e^{2T-2}-e^{T-1}}\cdot e^{T^2-2T}\cdot(\sqrt{1-e^{-1}})^{T-2}}\\
			&\leq \dfrac{2(1+7^T)\cdot7^{T(T-2)}}{8^{T^2-\frac{5}{3}T}\cdot\sqrt{e^{2T-2}-e^{T-1}}\cdot e^{T^2-2T}\cdot(\sqrt{1-e^{-1}})^{T-2}}.
		\end{split}
	\end{equation*}
	
	
	\textbf{Step 4.} We are ready to find upper bounds of $P\bigg(\frac{1}{TJ}S_n>Kn^{-2}\bigg)$ for each $n>\frac{J}{2\pi}$ and show that  $\underset{T\rightarrow+\infty}{\lim}\sum_{n\geq N}P\bigg(\frac{1}{TJ}S_n>Kn^{-2}\bigg)^{1/T}=0$.\\

	Going back to (\ref{tauMI}), 
	\begin{equation*}
		\begin{split}
			P\bigg(&\frac{1}{TJ}S_n>Kn^{-2}\bigg)
			\leq E\big[e^{16\tau\int_{0}^{T}e^{-t}\tilde{B}_{e^T}^2dt}\big]\cdot e^{-\tau\frac{K}{n^2}(\frac{\omega_n}{\gamma_n})^2TJ}\\
			&\leq \dfrac{2(1+7^T)\cdot7^{T^2-2T}}{8^{T^2-\frac{5}{3}T}\cdot\sqrt{e^{2T-2}-e^{T-1}}\cdot e^{T^2-2T}\cdot(\sqrt{1-e^{-1}})^{T-2}}\cdot e^{-\tau\frac{K}{\gamma_n^2}(\frac{\omega_n}{n})^2TJ}
		\end{split} 
	\end{equation*}
	where $\tau=\dfrac{1}{32e^T8^T}$.\\
	
	We denote $\ceil{x}$ as the ceiling of all real number $x$.\\
	
	Since $\omega_n=\dfrac{\sqrt{4k_n^2-1}}{2}$ and $k_n=\dfrac{n\pi}{J}$, we consider the following two cases
	\begin{enumerate}
	\item[(1).] 	if $\ceil[\big]{\dfrac{J}{2\pi}}>\dfrac{J}{2\pi}$, for all $n>\dfrac{J}{2\pi}$ and $n\in\mathbb{Z}$:
	\begin{equation*}
		\dfrac{\omega_n^2}{n^2}=\dfrac{4k_n^2-1}{4n^2}=\dfrac{\frac{4n^2\pi^2}{J^2}-1}{4n^2}=\dfrac{4n^2\pi^2-J^2}{4n^2J^2}=\dfrac{\pi^2}{J^2}-\dfrac{1}{4n^2}\geq\dfrac{\pi^2}{J^2}-\dfrac{1}{(\ceil{J/\pi})^2}>0.
	\end{equation*}

	\item[(2).] If $\ceil[\big]{\dfrac{J}{2\pi}}=\dfrac{J}{2\pi}$, the minimum value that $n$ can take is $\ceil[\big]{\dfrac{J}{2\pi}}+1$:
	\begin{equation*}
		\dfrac{\omega_n^2}{n^2}=\dfrac{4k_n^2-1}{4n^2}=\dfrac{\frac{4n^2\pi^2}{J^2}-1}{4n^2}=\dfrac{4n^2\pi^2-J^2}{4n^2J^2}=\dfrac{\pi^2}{J^2}-\dfrac{1}{4n^2}\geq\dfrac{\pi^2}{J^2}-\dfrac{1}{(\ceil{J/\pi}+1)^2}>0.
	\end{equation*}
	\end{enumerate}
	
	We define $\epsilon(J)=
	\begin{cases}
	\frac{\pi^2}{J^2}-\frac{1}{(\ceil{J/\pi})^2}, 
	\quad &\ceil[\big]{\frac{J}{2\pi}}>\frac{J}{2\pi};\\
	\frac{\pi^2}{J^2}-\frac{1}{4n^2}\geq\frac{\pi^2}{J^2}-\frac{1}{(\ceil{J/\pi}+1)^2},
	\quad & \ceil[\big]{\frac{J}{2\pi}}=\frac{J}{2\pi}
	\end{cases}
	$,

	then
	\begin{align*}
		P\bigg(\frac{1}{TJ}S_n>Kn^{-2}\bigg)
		&\leq \dfrac{2(1+7^T)7^{T^2-2T}}{8^{T^2-\frac{5}{3}T}\cdot\sqrt{e^{2T-2}-e^{T-1}}\cdot e^{T^2-2T}\cdot(\sqrt{1-e^{-1}})^{T-2}}\\
		&\quad\cdot e^{-\tau K\epsilon(J)(\frac{1}{\gamma_n})^2TJ}.
	\end{align*}
	
	Since $\gamma_{n}^2\leq c'n^{-\alpha}$ for some positive constant $c'$ and $\alpha$,  $-\dfrac{1}{\gamma_{n}^2}\leq -cn^{\alpha}$ where $c c'=1$. 
	
	Let $N=
	\begin{cases}
	\frac{J}{2\pi}+1, 
	&\quad\text{ if }\ceil[\big]{\frac{J}{2\pi}}=\frac{J}{2\pi};\\
	\ceil[\big]{\frac{J}{2\pi}}, 
	&\quad \text{ if }\ceil[\big]{\frac{J}{2\pi}}>\frac{J}{2\pi}
	\end{cases}$,
	then 
	\begin{equation*}
		\begin{split}
			\sum_{n\geq N}&P\bigg(\frac{1}{TJ}S_n>Kn^{-2}\bigg)
			\leq \dfrac{2(1+7^T)7^{T^2-2T}}{8^{T^2-\frac{5}{3}T}\cdot\sqrt{e^{2T-2}-e^{T-1}}\cdot e^{T^2-2T}\cdot(\sqrt{1-e^{-1}})^{T-2}}\\
			&\qquad\qquad\qquad\qquad\cdot\bigg(\exp\bigg(-\tau K\epsilon(J)\big(\frac{1}{\gamma_{N}}\big)^2TJ\bigg)+\int_{N}^{+\infty}\exp(-\tau K\epsilon(J) cx^{\alpha}TJ)dx\bigg)\\
			&=\dfrac{2(1+7^T)7^{T^2-2T}}{8^{T^2-\frac{5}{3}T}\cdot\sqrt{e^{2T-2}-e^{T-1}}\cdot e^{T^2-2T}\cdot(\sqrt{1-e^{-1}})^{T-2}\cdot\exp(\frac{1}{32e^T8^T}K\epsilon(J)(\frac{1}{\gamma_{N}})^2TJ) }\\
			&\quad +\left(\dfrac{2(1+7^T)7^{T^2-2T}}{8^{T^2-\frac{5}{3}T}\cdot\sqrt{e^{2T-2}-e^{T-1}}\cdot e^{T^2-2T}\cdot(\sqrt{1-e^{-1}})^{T-2}}\right)\\
			&\qquad\cdot\bigg(\int_{N}^{+\infty}\exp(-\tau K\epsilon(J) cx^{\alpha}TJ)dx\bigg)\\
			&=:\circled{1}+\circled{2}.
		\end{split}
	\end{equation*}
	
	
	First, we note that
	\begin{equation*}
		\underset{T\rightarrow+\infty}{\lim}\left(\exp\bigg(\frac{1}{32e^T8^T}K\epsilon(J)\bigg(\frac{1}{\gamma_{N}^2}\bigg)TJ\bigg)\right)^{1/T}=1.
	\end{equation*}
	
	Secondly, for $T$ large enough,  $\sqrt{e^{2T-2}-e^{T-1}}\geq\sqrt{e^{T-1}}$, we have
	\begin{equation*}
		\left(\dfrac{1}{\sqrt{e^{2T-2}-e^{T-1}}\cdot(\sqrt{1-e^{-1}})^{T-2}}\right)^{1/T}
		\leq\left(\frac{1}{\sqrt{e^{T-1}}}\right)^{1/T}\cdot\left(\frac{1}{\sqrt{1-e^{-1}}}\right)^{1-\frac{2}{T}}.
	\end{equation*}

	Now, we work on $\circled{1}^{1/T}$.
	
	\begin{align*}
		\underset{T\rightarrow+\infty}{\lim}\circled{1}^{1/T}
	&= \underset{T\rightarrow+\infty}{\lim}\bigg[\dfrac{1}{\sqrt{e^{2T-2}-e^{T-1}}\cdot(\sqrt{1-e^{-1}})^{T-2}\cdot
		\exp\left(\frac{1}{32e^T8^T}K\epsilon\left(\frac{1}{\gamma_{N}}\right)^2TJ\right)}\\
	&\qquad\qquad\cdot\dfrac{2(1+7^T)7^{T^2-2T}}{8^{T^2-\frac{5}{3}T}\cdot e^{T^2-2T}}\bigg]^{1/T}.
	\end{align*}

	We write the right hand side of the equation as a multiplication of three limits.
	
	\begin{align*}
	\underset{T\rightarrow+\infty}{\lim}\circled{1}^{1/T}
	&=\bigg(\underset{T\rightarrow+\infty}{\lim}\hspace{0.1in}\left(\dfrac{1}{\exp(\frac{1}{32e^T8^T}K\epsilon(\frac{1}{\gamma_{N}})^2TJ)}\right)^{1/T}\bigg)\\
	&\quad\cdot \bigg(\underset{T\rightarrow+\infty}{\lim}\hspace{0.1in}\left(\dfrac{1}{\sqrt{e^{2T-2}-e^{T-1}}\cdot(\sqrt{1-e^{-1}})^{T-2}}\right)^{1/T}\bigg)\\
	&\quad\cdot\bigg[\underset{T\rightarrow+\infty}{\lim}\bigg(\dfrac{2\cdot7^{T^2-2T}}{8^{T^2-\frac{5}{3}T}\cdot e^{T^2-2T}}+\dfrac{2\cdot7^{T^2-T}}{8^{T^2-\frac{5}{3}T}\cdot e^{T^2-2T}}\bigg)^{1/T}\bigg].
	\end{align*}

	Then by the previous dicussion, we derive an upper bound of $	\underset{T\rightarrow+\infty}{\lim}\circled{1}^{1/T}$. That is 
	\begin{align*}
		\underset{T\rightarrow+\infty}{\lim}\circled{1}^{1/T}
		&\leq\bigg(\underset{T\rightarrow+\infty}{\lim}\hspace{0.1in}\left(\dfrac{1}{\exp(\frac{1}{32e^T8^T}K\epsilon(\frac{1}{\gamma_{N}})^2J)}\right)\bigg)\\
		&\quad\cdot \bigg(\underset{T\rightarrow+\infty}{\lim}\hspace{0.1in}\left(\frac{1}{\sqrt{e^{T-1}}}\right)^{1/T}\cdot\left(\frac{1}{\sqrt{1-e^{-1}}}\right)^{1-\frac{2}{T}}\bigg)\\
		&\quad\cdot\bigg[\underset{T\rightarrow+\infty}{\lim}\bigg(\dfrac{4\cdot7^{T^2-T}}{8^{T^2-\frac{5}{3}T}\cdot e^{T^2-2T}}\bigg)^{1/T}\bigg].
	\end{align*}

	Since
	\begin{equation}
		\underset{T\rightarrow+\infty}{\lim}\bigg(\dfrac{4\cdot7^{T^2-T}}{8^{T^2-\frac{5}{3}T}\cdot e^{T^2-2T}}\bigg)^{1/T}=0,
	\end{equation}
	we derive that $ \underset{T\rightarrow+\infty}{\lim}\circled{1}^{1/T}=0$.
	
	Next, we consider $\circled{2}$. According to the moments of stretched exponential function, we have the following inequality,
	\begin{equation*}
		\int_{N}^{+\infty}\exp(-\tau TK\epsilon(J) cx^{\alpha}J)dx\leq\tilde{K}\cdot\Gamma\bigg(\dfrac{1}{\alpha}\bigg)(8^Te^T)^{\frac{1}{\alpha}}\left(\dfrac{1}{T}\right)^{1/\alpha}.
	\end{equation*}
	where $\tilde{K}=\dfrac{1}{\alpha}\bigg(\dfrac{32}{K\epsilon(J) cJ}\bigg)^{\frac{1}{\alpha}}$ and $\Gamma$ is the gamma function.
	
	When $T>1$, $T^{1/\alpha}>1$ for all $\alpha>0$, 
	\begin{equation*}
		\int_{N}^{+\infty}\exp(-\tau TK\epsilon(J) cx^{\alpha}J)dx\leq\tilde{K}\cdot\Gamma\bigg(\dfrac{1}{\alpha}\bigg)(8^Te^T)^{\frac{1}{\alpha}}.
	\end{equation*}
	
	Then we have
	\begin{equation*}
		\begin{split}
			\circled{2}
			&=\dfrac{2(1+7^T)7^{T^2-2T}}{8^{T^2-\frac{5}{3}T}\cdot\sqrt{e^{2T-2}-e^{T-1}}\cdot e^{T^2-2T}\cdot(\sqrt{1-e^{-1}})^{T-2}}\cdot\bigg(\int_{N}^{+\infty}\exp(-\tau K\epsilon(J) cx^{\alpha}TJ)dx\bigg)\\
			&\leq\tilde{K}\Gamma\bigg(\dfrac{1}{\alpha}\bigg)\dfrac{2(1+7^T)7^{T^2-2T}8^{\frac{T}{\alpha}}e^{\frac{T}{\alpha}}}{8^{T^2-\frac{5}{3}T}\cdot\sqrt{e^{2T-2}-e^{T-1}}\cdot e^{T^2-2T}\cdot(\sqrt{1-e^{-1}})^{T-2}}\\
			&=:\tilde{K}\Gamma\bigg(\dfrac{1}{\alpha}\bigg)\circled{2a}.
		\end{split}
	\end{equation*}
	
	Since $\tilde{K}$ and $\Gamma\left(\dfrac{1}{\alpha}\right)$ are independent from $T$, $\underset{T\rightarrow+\infty}{\lim}\circled{2}^{1/T}=\underset{T\rightarrow+\infty}{\lim}\circled{2a}^{1/T}$.
	
	Then
	\begin{equation*}
		\begin{split}
			\underset{T\rightarrow+\infty}{\lim}\circled{2a}^{1/T}
			&=\underset{T\rightarrow+\infty}{\lim}\dfrac{2^{\frac{1}{T}}(1+7^T)^{\frac{1}{T}}7^{T-2}\cdot8^{\frac{1}{\alpha}}\cdot e^{\frac{1}{\alpha}}}{8^{T-\frac{5}{3}}\cdot\left(\sqrt{e^{2T-2}-e^{T-1}}\right)^{\frac{1}{T}}\cdot e^{T-2}\cdot(\sqrt{1-e^{-1}})^{1-\frac{2}{T}}}\\
			&\leq \dfrac{e^2\cdot8^{\frac{5}{3}}\cdot8^{\frac{1}{\alpha}}\cdot e^{\frac{1}{\alpha}}}{7^2}
			\left(\underset{T\rightarrow+\infty}{\lim}
			\dfrac{2^{\frac{2}{T}}\cdot\left(7^{T\cdot\frac{1}{T}}\right)\cdot7^{T}}{8^T\cdot\left(\sqrt{e^{T-1}}\right)^{\frac{1}{T}}\cdot e^T\cdot (\sqrt{1-e^{-1}})^{1-\frac{2}{T}}}\right)
		\end{split}
	\end{equation*}
	
	Since
	\begin{align*}
		\underset{T\rightarrow+\infty}{\lim}\dfrac{2^{\frac{2}{T}}\cdot\left(7^{T\cdot\frac{1}{T}}\right)\cdot7^{T}}{8^T\cdot\left(\sqrt{e^{T-1}}\right)^{\frac{1}{T}}\cdot e^T\cdot (\sqrt{1-e^{-1}})^{1-\frac{2}{T}}}
		&=	\underset{T\rightarrow+\infty}{\lim}\left[\dfrac{7\cdot2^{\frac{2}{T}}}{e^{\frac{1}{2}-\frac{1}{2T}}\cdot (\sqrt{1-e^{-1}})^{1-\frac{2}{T}}}\cdot\left(\dfrac{7}{8\cdot e}\right)^T\right]\\
		&=0.
	\end{align*}
	
	We know that
	
	\begin{equation*}
		\underset{T\rightarrow+\infty}{\lim}\circled{2a}^{1/T}=0.
	\end{equation*}
	
	Hence 
	\begin{equation*}
		\underset{T\rightarrow+\infty}{\lim}\circled{2}^{1/T}=0.
	\end{equation*}
	
	Let $G^{(1)}_T:=2\cdot\max\left\{\circled{1}^{1/T}, \circled{2}^{1/T}\right\}$
	
	We showed that 
	\begin{equation*}
		\underset{T\rightarrow+\infty}{\lim}\left(\sum_{n> J/2\pi}P\bigg(\dfrac{1}{TJ}S_n>Kn^{-2}\bigg)\right)^{1/T}\leq	\underset{T\rightarrow+\infty}{\lim}G^{(1)}_T=0
	\end{equation*}
	
	\textbf{Case (2) When $J>2\pi n$} 
	
	\textbf{Step 1.}  For each $n<\frac{J}{2\pi}$, we will find an upper bound of $P\left(\dfrac{1}{TJ}S_T^n>Kn^{-2}\right).$
	
	Let $C_n(t)=\int_{-\infty}^{t}e^{\frac{-1+\omega_n}{2}(t-s)}-e^{\frac{-1-\omega_n}{2}(t-s)}dW_n(s)$, then 
	\begin{equation*}
		C_n(t)=e^{\frac{-1+\omega_n}{2}(t)}\int_{-\infty}^{t}e^{\frac{1-\omega_n}{2}s}dW_n(s)-e^{\frac{-1-\omega_n}{2}(t)}\int_{-\infty}^{t}e^{\frac{1+\omega_n}{2}s}dW_n(s).
	\end{equation*}
	
	Each integral above has mean 0 and variance as the following
	\begin{equation*}
		\begin{split}
			\E\big[\big(\int_{-\infty}^{t}e^{\frac{1-\omega_n}{2}(s)}dW_n(s)\big)^2\big]&=\int_{-\infty}^{t}e^{(1-\omega_n)(s)}dW_n(s)=\dfrac{1}{1-\omega_n}e^{(1-\omega_n)t},\\
			\E\big[\big(\int_{-\infty}^{t}e^{\frac{1+\omega_n}{2}(s)}dW_n(s)\big)^2\big]&=\int_{-\infty}^{t}e^{(1+\omega_n)(s)}dW_n(s)=\dfrac{1}{1+\omega_n}e^{(1+\omega_n)t}.
		\end{split}
	\end{equation*}
	
	Then 
	\begin{equation*}
		\begin{split}
			\int_{-\infty}^{t}e^{\frac{1-\omega_n}{2}\cdots}dW_n(s)
			&\stackrel{D}{=}
			B_{\int_{-\infty}^{t}e^{(1-\omega_n)\cdot s}ds}
			\stackrel{D}{=}
			B_{\frac{1}{1-\omega_n}e^{(1-\omega_n)t}}=:\tilde{B}_t^-,\\
			\int_{-\infty}^{t}e^{\frac{1+\omega_n}{2}\cdot s}dW_n(s)
			&\stackrel{D}{=}
			B_{\int_{-\infty}^{t}e^{(1+\omega_n)\cdot s}ds}
			\stackrel{D}{=}B_{\frac{1}{1+\omega_n}e^{(1+\omega_n)t}}=:\tilde{B}_t^+.
		\end{split}
	\end{equation*}
	
	We define
	\begin{equation*}
		\tilde{B}_t:=\underset{0\leq s\leq t}{\sup}\left|B_{\frac{1}{1-\omega_n}s}\right|.
	\end{equation*}
	
	Since 
	\begin{equation*}
		\tilde{B}_t^\pm\leq\underset{0\leq s\leq t}{\sup}B_{\frac{1}{1-\omega_n}e^{(1+\omega_n)s}}\leq\underset{0\leq s\leq t}{\sup}\left|_{\frac{1}{1-\omega_n}e^{(1+\omega_n)s}}\right|=\tilde{B}_{e^{(1+\omega_n)t}},
	\end{equation*}
	given any $\lambda\in\mathbb{R}$, 
	\begin{equation*}
		\begin{split}
			P(C_n^2(t)>\lambda)
			\leq P\big(4e^{(-1+\omega_n)t}\tilde{B}^2_{e^{(1+\omega_n)t}}>\lambda\big).
		\end{split}
	\end{equation*}
	
	By (\ref{ineqoftildeB}), given $0\leq s\leq t$,
	\begin{equation*}
		\tilde{B}_t\leq 3\tilde{B}_s+\tilde{B}_{s,t}
	\end{equation*}
	where 
	\begin{equation*}
		\tilde{B}_{s,t}:=\underset{s\leq r\leq t}{\sup}\left|B_{\frac{1}{1-\omega_n}r}-B_{\frac{1}{1-\omega_n}s}\right|
	\end{equation*}
	and 
	\begin{equation*}
		\tilde{B}_{s,t}\stackrel{D}{=}\tilde{B}_{t-s}.
	\end{equation*}
	
	For any $\tau>0$, then by Markov inquality,
	\begin{equation}
		\begin{split}
			P\bigg(\dfrac{1}{TJ}S_n>Kn^{-2}\bigg)
			&=P\bigg(\int_{0}^{T}C_n^2(t)dt>\dfrac{K}{n^2}\bigg(\dfrac{\omega_n}{\gamma_n}\bigg)^2TJ\bigg)\\
			&=P\bigg(e^{\tau\int_{0}^{T}C_n^2(t)dt}>e^{\tau\frac{K}{n^2}(\frac{\omega_n}{\gamma_n})^2TJ}\bigg)\\
			&\leq \E\bigg[e^{\tau\int_{0}^{T}C_n^2(t)dt}\bigg]e^{-\tau\frac{K}{n^2}(\frac{\omega_n}{\gamma_n})^2TJ}\\
			&\leq\E\bigg[e^{4\tau\int_{0}^{T}e^{(-1+\omega_n)t}\tilde{B}^2_{e^{(1+\omega_n)t}}dt}\bigg]e^{-\tau\frac{K}{n^2}(\frac{\omega_n}{\gamma_n})^2TJ}.\label{probofJ>2pin}
		\end{split}
	\end{equation}
	
	\textbf{Step 2.} We will find an upper bound of $\int_{0}^{T}e^{(-1+\omega_n)t}\tilde{B}^2_{e^{(1+\omega_n)t}}dt$.
	
	Let $T>2$ be an integer, by Lemma 4.1,
	\begin{equation}
		\begin{split}
			\int_{0}^{T}&e^{(-1+\omega_n)t}\tilde{B}^2_{e^{(1+\omega_n)t}}dt
			\leq\int_{0}^{1}e^{(-1+\omega_n)t}\tilde{B}^2_{e^{(1+\omega_n)t}}dt\\
			&+\tilde{B}^2_{e^{(1+\omega_n)}}\sum_{k=1}^{T-1}
			2\cdot18^k\int_{k}^{k+1}
			e^{(-1+\omega_n)t}dts\\
			& +\sum_{m=1}^{T-2}\bigg[\bigg(\sum_{k=m+1}^{T-1}
			2\cdot18^{k-m}\int_{k}^{k+1}
			e^{(-1+\omega_n)t}\tilde{B}^2_{e^{(1+\omega_n)m},e^{(1+\omega_n)(m+1)}}dt\bigg)\\
			&\qquad\qquad+2\int_{m}^{m+1}e^{(-1+\omega_n)t}\tilde{B}^2_{e^{m(1+\omega_n)},e^{(1+\omega_n)t}}\bigg].\label{J>2pininteB}
		\end{split}
	\end{equation}
	
	Let 
	\begin{equation*}
		\begin{split}
			(\text{I})&=\int_{0}^{1}e^{(-1+\omega_n)t}\tilde{B}^2_{e^{(1+\omega_n)t}}dt
			+\tilde{B}^2_{e^{(1+\omega_n)}}\sum_{k=1}^{T-1}
			2\cdot 18{k}\int_{k}^{k+1}
			e^{(-1+\omega_n)t}dt,\\
			(\text{II})&=\sum_{m=1}^{T-2}\bigg[\bigg(\sum_{k=m+1}^{T-1}
			2\cdot18^{k-m}\int_{k}^{k+1}
			e^{(-1+\omega_n)t}\tilde{B}^2_{e^{(1+\omega_n)m},e^{(1+\omega_n)(m+1)}}dt\bigg)\\
			&\qquad\qquad+2\int_{m}^{m+1}e^{(-1+\omega_n)t}\tilde{B}^2_{e^{m(1+\omega_n)},e^{(1+\omega_n)t}}\bigg].
		\end{split}
	\end{equation*}
	
	We start with (I). Note that (I)$\in\mathcal{F}_{e^{(1+\omega_n)}}$. We have
	\begin{equation*}
		\begin{split}
			\sum_{k=1}^{T-1}2\cdot18^{k}\int_{k}^{k+1}e^{(-1+\omega_n)t}dt
			\leq\dfrac{1}{1-\omega_n}\sum_{k=1}^{T-1}\bigg(\dfrac{18}{e^{1-\omega_n}}\bigg)^k.
		\end{split}
	\end{equation*}
	
	Since $\omega_n=\dfrac{\sqrt{1-4k_n^2}}{2}$ where $k_n=\dfrac{n\pi}{J}$, we have $1-\omega_n>0$, and
	\begin{equation*}
		\sum_{k=1}^{T-1}2\cdot18^{k}\int_{k}^{k+1}e^{(-1+\omega_n)t}dt
		\leq\dfrac{1}{1-\omega_n}\sum_{k=1}^{T-1}18^k=\dfrac{1}{1-\omega_n}\dfrac{18^T-18}{17}.
	\end{equation*}
	
	Then we have 
	\begin{equation*}
		\begin{split}
			(\text{I})
			\leq \int_{0}^{1}e^{(-1+\omega_n)t}\tilde{B}^2_{e^{(1+\omega_n)t}}dt+\dfrac{18^T}{1-\omega_n}\tilde{B}^2_{e^{1+\omega_n}}.
		\end{split}
	\end{equation*}
	
	Since $e^{(-1+\omega_n)t}\leq1$ and $\tilde{B}^2_{e^{(1+\omega_n)t}}\leq\tilde{B}^2_{e^{(1+\omega_n)}}, \forall t\in[0,1]$,
	\begin{equation*}
		(\text{I})\leq \tilde{B}^2_{e^{(1+\omega_n)}}\bigg(1+\dfrac{18^T}{1-\omega_n}\bigg).
	\end{equation*}
	
	Next, we consider 
	\begin{equation*}
		\begin{split}
			(\text{II})&=\sum_{m=1}^{T-2}\bigg[\bigg(\sum_{k=m+1}^{T-1}
			2\cdot18^{k-m}\int_{k}^{k+1}
			e^{(-1+\omega_n)t}\tilde{B}^2_{e^{(1+\omega_n)m},e^{(1+\omega_n)(m+1)}}dt\bigg)\\
			&\qquad\qquad+2\int_{m}^{m+1}e^{(-1+\omega_n)t}\tilde{B}^2_{e^{m(1+\omega_n)},e^{(1+\omega_n)t}}\bigg].
		\end{split}
	\end{equation*}
	
	For each $m\in\{1, 2,\dots, T-2\}$, let
	\begin{equation*}
		\begin{split}
			M_m=&\bigg[\bigg(\sum_{k=m+1}^{T-1}
			2\cdot18^{k-m}\int_{k}^{k+1}
			e^{(-1+\omega_n)t}\tilde{B}^2_{e^{(1+\omega_n)m},e^{(1+\omega_n)(m+1)}}dt\bigg)\\
			&\qquad\qquad+2\int_{m}^{m+1}e^{(-1+\omega_n)t}\tilde{B}^2_{e^{m(1+\omega_n)},e^{(1+\omega_n)t}}\bigg].
		\end{split}
	\end{equation*}
	
	Then $M_m\in\mathcal{F}_{e^{(1+\omega_n)(m+1)}}$ and $M_m\indep\mathcal{F}_{e^{(1+\omega_n)m}}$.
	
	For a fixed $m$, we have 
	\begin{equation*}
		\sum_{k=m+1}^{T-1}2\cdot18^{k-m}\int_{k}^{k+1}e^{(-1+\omega_n)t}\tilde{B}^2_{e^{(1+\omega_n)m},e^{(1+\omega_n)(m+1)}}dt
	\end{equation*}
	\begin{equation*}
		\hspace{1.5in}\leq \dfrac{\tilde{B}^2_{e^{(1+\omega_n)m},e^{(1+\omega_n)(m+1)}}\cdot2\cdot18^{T-m}}{1-\omega_n}.\hspace{3.2cm}
	\end{equation*}
	
	Let $\alpha'_m=2\cdot18^{T-m}$. Then we have
	\begin{equation*}
		\begin{split}
			(\text{II})&\leq \sum_{m=1}^{T-2}\left[\dfrac{\tilde{B}^2_{e^{(1+\omega_n)m},e^{(1+\omega_n)(m+1)}}\cdot\alpha'_m}{1-\omega_n}+2\tilde{B}^2_{e^{(1+\omega_n)m},e^{(1+\omega_n)(m+1)}}\dfrac{e^{(-1+\omega_n)m}}{1-\omega_n}\right]\\
			&=\dfrac{1}{1-\omega_n}\sum_{m=1}^{T-2}\tilde{B}^2_{e^{(1+\omega_n)m},e^{(1+\omega_n)(m+1)}}\left(\alpha'_m+2e^{(-1+\omega_n)m}\right).
		\end{split}
	\end{equation*}
	
	\textbf{Step 3.} We will choose a proper value for $\tau$ and find an explicit upper bound of\\ $E\big[e^{4\tau\int_{0}^{T}e^{(-1+\omega_n)t}\tilde{B}_{e^{(1+\omega_n)t}}^2dt}\big]$.\\
	
	Back to (\ref{J>2pininteB}), we have
	\begin{equation*}
		\begin{split}
			\int_{0}^{T}e^{(-1+\omega_n)t}\tilde{B}^2_{e^{(1+\omega_n)t}}dt
			&\leq\left(\dfrac{18^T}{1-\omega_n}+1\right)\tilde{B}^2_{e^{1+\omega_n}}\\
			&\quad+\dfrac{1}{1-\omega_n}\sum_{m=1}^{T-2}\tilde{B}^2_{e^{(1+\omega_n)m},e^{(1+\omega_n)(m+1)}}\left(\alpha'_m+2e^{(-1+\omega_n)m}\right).
		\end{split}
	\end{equation*}
	
	Then 
	\begin{equation}
		\begin{split}
			\E&\left[e^{4\tau\int_{0}^{T}e^{(-1+\omega_n)t}\tilde{B}^2_{e^{(1+\omega_n)t}}dt}\right]
			\leq\E\left[\exp\left(4\tau\left(\dfrac{18^T}{1-\omega_n}+1\right)\tilde{B}^2_{e^{1+\omega_n}} \right)\right]\\
			&\quad\cdot\prod_{m=1}^{T-2}\E\left[\exp\left(4\tau\dfrac{1}{1-\omega_n}\tilde{B}^2_{e^{(1+\omega_n)m},e^{(1+\omega_n)(m+1)}}\left(\alpha'_m+2e^{(-1+\omega_n)m}\right)\right)\right].\label{J>2pinexpectation}
		\end{split}
	\end{equation}
	
	Let
	\begin{equation*}
		\begin{split}
			\text{(i)}&=\E\left[\exp\left(4\tau\left(\dfrac{18^T}{1-\omega_n}+1\right)\tilde{B}^2_{e^{1+\omega_n}} \right)\right]\\
			\text{(ii)}&=\prod_{m=1}^{T-2}\E\left[\exp\left(4\tau\dfrac{1}{1-\omega_n}\tilde{B}^2_{e^{(1+\omega_n)m},e^{(1+\omega_n)(m+1)}}\left(\alpha'_m+2e^{(-1+\omega_n)m}\right)\right)\right].
		\end{split}
	\end{equation*}
	
	We start with (i).
	\begin{equation*}
		\begin{split}
			\E\bigg[\exp\bigg(4\tau\left(\dfrac{18^T}{1-\omega_n}+1\right)&\tilde{B}^2_{e^{1+\omega_n}}\bigg)\bigg]\\
			&=\int_{1}^{+\infty}P\left(\exp\left(4\tau\left(\dfrac{18^T}{1-\omega_n}+1\right)\tilde{B}^2_{e^{1+\omega_n}}\right)\geq x\right)dx\\
			&=4\int_{1}^{+\infty}P\left(B_{e^{(1+\omega_n)}}\geq\sqrt{\dfrac{\log x}{4\tau\left(\dfrac{18^T}{1-\omega_n}+1\right)}}\right)dx.
		\end{split}
	\end{equation*}
	
	When $1-\dfrac{1}{8\tau e^{1+\omega_n}(\frac{18^T}{1-\omega_n}+1)}<0$, that is $\tau<\dfrac{1}{8\left(\frac{18^T}{1-\omega_n}+1\right)e^{1+\omega_n}}$, by Lemma 4.1, we have
	\begin{equation*}
		\E\left[\exp\left(4\tau\left(\dfrac{18^T}{1-\omega_n}+1\right)\tilde{B}^2_{e^{1+\omega_n}}\right)\right]
		\leq \dfrac{16\tau e^{1+\omega_n}\left(\frac{18^T}{1-\omega_n}+1\right)}{\sqrt{1-8\tau\left(\frac{18^T}{1-\omega_n}+1\right)e^{(1+\omega_n)}}}.
	\end{equation*}
	
	Next we work on (ii).
	
	For each $m\in\{1, 2, \dots, T-2\}$, 
	\begin{equation*}
		\begin{split}
			\E&\left[\exp\left(4\tau\dfrac{1}{1-\omega_n}\tilde{B}^2_{e^{(1+\omega_n)m},e^{(1+\omega_n)(m+1)}}\left(\alpha'_m+2e^{(-1+\omega_n)m}\right)\right)\right]\\
			&=4\int_{1}^{\infty}P\left(B_{e^{(1+\omega_n)(m+1)}-e^{(1+\omega_n)m}}\geq\sqrt{\dfrac{\log x}{4\tau\frac{1}{1-\omega_n}(\alpha'_m+2e^{(-1+\omega_n)m})}}\right)dx.
		\end{split}
	\end{equation*}
	
	Let $\sigma_m^2=e^{(1+\omega_n)(m+1)}-e^{(1+\omega_n)m}$ and $\gamma_m=4\tau\frac{1}{1-\omega_n}(\alpha'_m+2e^{(-1+\omega_n)m})$. When $1-\dfrac{1}{2\gamma_m\sigma_m^2}<0$, that is 
	\begin{center}
		$\tau<\dfrac{1}{8\frac{1}{1-\omega_n}(2\cdot18^{T-m}+2e^{(-1+\omega_n)m})(e^{(1+\omega_n)(m+1)}-e^{(1+\omega_n)m})}$.
	\end{center}
	
	By Lemma 4.2, we have
	\begin{equation*}
		\begin{split}
			\E&\left[\exp\left(4\tau\dfrac{1}{1-\omega_n}\tilde{B}^2_{e^{(1+\omega_n)m},e^{(1+\omega_n)(m+1)}}\left(\alpha'_m+2e^{(-1+\omega_n)m}\right)\right)\right]\\
			&\leq\dfrac{16\tau\frac{1}{1-\omega_n}(\alpha'_m+2e^{(-1+\omega_n)m})\sigma_m^2}{\sqrt{1-8\tau\frac{1}{1-\omega_n}(\alpha'_m+2e^{(-1+\omega_n)m})\sigma_m^2}}.
		\end{split}
	\end{equation*}
	
	Considering (i) and (ii) together, we want $\tau$ to satisfiy the following conditions: $\forall m\in\{1,\dots, T-2\}$,
	\[
	\begin{cases}
		\tau<\dfrac{1}{8\frac{1}{1-\omega_n}(2\cdot18^{T-m}+2e^{(-1+\omega_n)m})(e^{(1+\omega_n)(m+1)}-e^{(1+\omega_n)m})},\\
		\tau<\dfrac{1}{8\left(\frac{18^T}{1-\omega_n}+1\right)e^{1+\omega_n}}.
	\end{cases}
	\]
	
	So, we let
	\begin{equation*}
		\tau=\dfrac{1}{8\cdot\frac{1}{1-\omega_n}\cdot19^T\cdot e^{(1+\omega_n)T}}.
	\end{equation*}
	
	Back to (\ref{J>2pinexpectation}),
	\begin{equation*}
		\begin{split}
			\E\bigg[\exp\bigg(4\tau\int_{0}^{T}e^{(-1+\omega_n)t}&\tilde{B}^2_{e^{(1+\omega_n)t}}dt\bigg)\bigg]\\
			&\leq\dfrac{\frac{16 e^{1+\omega_n}\left(\frac{18^T}{1-\omega_n}+1\right)}{8\cdot\frac{1}{1-\omega_n}\cdot19^T\cdot e^{(1+\omega_n)T}}}{\sqrt{1-8\left(\frac{18^T}{1-\omega_n}+1\right)e^{(1+\omega_n)}\cdot\frac{1}{8\cdot\frac{1}{1-\omega_n}\cdot19^T\cdot e^{(1+\omega_n)T}}}}\\
			&\quad\cdot\prod_{m=1}^{T-2}\dfrac{\frac{16\frac{1}{1-\omega_n}(\alpha'_m+2e^{(-1+\omega_n)m})\sigma_m^2}{8\cdot\frac{1}{1-\omega_n}\cdot19^T\cdot e^{(1+\omega_n)T}}}{\sqrt{1-\frac{1}{19^T\cdot e^{(1+\omega_n)T}}(\alpha'_m+2e^{(-1+\omega_n)m})\sigma_m^2}}.
		\end{split}
	\end{equation*}
	
	Then we simplify the above inequality and get
	\begin{equation*}
		\begin{split}
			\E&\left[\exp\left(4\tau\int_{0}^{T}e^{(-1+\omega_n)t}\tilde{B}^2_{e^{(1+\omega_n)t}}dt\right)\right]\\
			&\leq\dfrac{2\left(\frac{18^T}{1-\omega_n}+1\right)}{\sqrt{\frac{1}{1-\omega_n}\cdot19^T\cdot e^{(1+\omega_n)(T-1)}}\sqrt{\frac{1}{1-\omega_n}\cdot19^T\cdot e^{(1+\omega_n)(T-1)}-\frac{19^T}{1-\omega_n}}}\\
			&\quad\cdot\prod_{m=1}^{T-2}\dfrac{2(\alpha'_m+2e^{(-1+\omega_n)m})\cdot e^{(1+\omega_n)(m+1)}}{\sqrt{19^T\cdot e^{(1+\omega_n)T}}\sqrt{19^T\cdot e^{(1+\omega_n)T}-19^T\cdot e^{(1+\omega_n)(T-1)}}}.
		\end{split}
	\end{equation*}
	
	Since $0<\omega_n<1$, as $T>5$, we have the following
	\begin{equation*}
		\begin{split}
			\E&\left[\exp\left(4\tau\int_{0}^{T}e^{(-1+\omega_n)t}\tilde{B}^2_{e^{(1+\omega_n)t}}dt\right)\right]\\
			&\leq \dfrac{2(18^T+1-\omega_n)}{19^T\sqrt{e^{(1+\omega_n)(2T-2)}-e^{(1+\omega_n)(T-1)}}}\prod_{m=1}^{T-2}\dfrac{2\left(2\cdot18^Te^{(1+\omega_n)}\frac{e^{(1+\omega_n)m}}{18^m}+2e^{(1+\omega_n)}\cdot e^{2\omega_nm}\right)}{19^T\sqrt{e^{(1+\omega_n)2T}-e^{(1+\omega_n)(2T-1)}}}.
		\end{split}
	\end{equation*}
	
	Since $\frac{e^{(1+\omega_n)m}}{18^m}<1$ and $e^{2\omega_nm}<18^T$ as $T$ large enough, 
	\begin{equation*}
		\begin{split}
			\E&\left[\exp\left(4\tau\int_{0}^{T}e^{(-1+\omega_n)t}\tilde{B}^2_{e^{(1+\omega_n)t}}dt\right)\right]\\
			&\leq \dfrac{4\cdot18^T}{19^T\sqrt{e^{(1+\omega_n)(2T-2)}-e^{(1+\omega_n)(T-1)}}}\cdot\dfrac{8^{T-2}\cdot e^{(1+\omega_n)(T-2)}\cdot 18^{T(T-2)} }{19^{T(T-2)}e^{(1+\omega_n)T(T-2)}\left(\sqrt{1-e^{-(1+\omega_n)}}\right)^{T-2}}.
		\end{split}
	\end{equation*}
	
	\textbf{Step 4.} We are ready to get an upper bound of $P\left(\dfrac{1}{TJ}S_n>Kn^{-2}\right)$ and  will show that 
	\begin{equation*}
		\underset{T\rightarrow+\infty}{\lim}P\left(\dfrac{1}{TJ}S_n>Kn^{-2}\right)^{1/T}=0.
	\end{equation*}
	
	Back to (\ref{probofJ>2pin}), we have
	\begin{equation*}
		\begin{split}
			P\left(\dfrac{1}{TJ}S_n>Kn^{-2}\right)
			&\leq\dfrac{4\cdot18^T\cdot 8^{T-2}\cdot e^{(1+\omega_n)(T-2)}}{19^T\sqrt{e^{(1+\omega_n)(2T-2)}-e^{(1+\omega_n)(T-1)}}}\\
			&\quad\cdot\dfrac{18^{T(T-2)}}{19^{T(T-2)}e^{(1+\omega_n)T(T-2)}\left(\sqrt{1-e^{-(1+\omega_n)}}\right)^{T-2}}\\
			&\quad\cdot\dfrac{1}{\exp\left(\frac{1}{8\frac{1}{1-\omega_n}19^Te^{(1+\omega_n)T}}\frac{K}{n^2}\left(\frac{\omega_n}{\gamma_n}\right)^2TJ\right)}.
		\end{split}
	\end{equation*}
	

	For $0<n<\frac{J}{2\pi}$, let
	\begin{equation*}
		\text{(1)}_n=\dfrac{18^{T(T-2)}}{19^{T(T-1)}},\quad \text{(2)}_n=\dfrac{4\cdot18^T\cdot 8^{T-2}\cdot e^{(1+\omega_n)(T-2)}}{e^{(1+\omega_n)T(T-2)}},
	\end{equation*}
	\begin{equation*}
		\text{(3)}_n=\dfrac{1}{\sqrt{e^{(1+\omega_n)(2T-2)}-e^{(1+\omega_n)(T-1)}}\cdot\left(\sqrt{1-e^{-(1+\omega_n)}}\right)^{T-2}},
	\end{equation*}
	and
	\begin{equation*}
		\text{(4)}_n=\dfrac{1}{\exp\left(\frac{1}{8\frac{1}{1-\omega_n}19^Te^{(1+\omega_n)T}}\frac{K}{n^2}\left(\frac{\omega_n}{\gamma_n}\right)^2TJ\right)}.
	\end{equation*}
	
	Then
	\begin{equation*}
		P\left(\dfrac{1}{TJ}S_n>Kn^{-2}\right)\leq \text{(1)}_n\cdot\text{(2)}_n\cdot\text{(3)}_n\cdot\text{(4)}_n.
	\end{equation*}
	
	For fixed $n$ such that $0<n<\dfrac{J}{2\pi}$,
	\begin{equation*}
		\underset{T\rightarrow+\infty}{\lim}\text{(4)}_n^{1/T}=	\underset{T\rightarrow+\infty}{\lim}\dfrac{1}{\exp\left(\frac{1}{8\frac{1}{1-\omega_n}19^Te^{(1+\omega_n)T}}\frac{K}{n^2}\left(\frac{\omega_n}{\gamma_n}\right)^2J\right)}=\dfrac{1}{e^0}=1.
	\end{equation*}
	
	Then for $(3)_n$,
	\begin{equation*}
		\begin{split}
			(3)_n^{1/T}&=\dfrac{1}{\left(\sqrt{e^{(1+\omega_n)(2T-2)}-e^{(1+\omega_n)(T-1)}}\right)^{\frac{1}{T}}\cdot \left(\sqrt{1-e^{-(1+\omega_n)}}\right)^{1-\frac{2}{T}}}\\
			&\leq\dfrac{1}{e^{(1+\omega_n)(\frac{1}{2}-\frac{1}{2T})}\cdot \left(\sqrt{1-e^{-(1+\omega_n)}}\right)^{1-\frac{2}{T}}}.
		\end{split}
	\end{equation*}
	
	We have
	\begin{equation*}
		\underset{T\rightarrow+\infty}{\lim}(3)_n^{1/T}=	\dfrac{1}{e^{(1+\omega_n)\cdot\frac{1}{2}}\cdot \left(\sqrt{1-e^{-(1+\omega_n)}}\right)}.
	\end{equation*}
	
	Also
	\begin{equation*}
		\underset{T\rightarrow+\infty}{\lim}(1)_n^{1/T}=\underset{T\rightarrow+\infty}{\lim}\dfrac{18^{T-1}}{19^{T-1}}=0.
	\end{equation*}
	
	Finally,
	\begin{equation*}
		\underset{T\rightarrow+\infty}{\lim}(2)_n^{1/T}
		=\underset{T\rightarrow+\infty}{\lim}\dfrac{8^{1-\frac{2}{T}}\cdot e^{(1+\omega_n)(1-\frac{2}{T})}}{e^{(1+\omega_n)(T-2)}}=0.
	\end{equation*}
	
	Let $G^{(2)}_T=\dfrac{J}{2\pi}\cdot \left(\underset{0<n<\frac{J}{2\pi}}{\max}\left\{(1)_n\cdot(2)_n\cdot(3)_n\cdot(4)_n\right\}\right)^{1/T}.$
	
	We showed that 
	\begin{equation*}
		\underset{T\rightarrow+\infty}{\lim}\left(\sum_{0<n<\frac{J}{2\pi}}P\left(\dfrac{1}{TJ}S_n>Kn^{-2}\right)\right)^{1/T}\leq\underset{T\rightarrow+\infty}{\lim}G^{(2)}_T=0.
	\end{equation*}

	
	\textbf{Case (3) When $J=2\pi n$,}
	recall 
	\begin{equation*}
		\begin{split}
			\tilde{a}_n(t)
			&=\int_{-\infty}^{t}e^{-\frac{1}{2}(t-s)}\gamma_n(t-s)dW_n(s)\\
			&=\gamma_n\int_{-\infty}^{t}e^{-\frac{1}{2}(t-s)}(t-s)dW_n(s).
		\end{split}
	\end{equation*}
	
	\textbf{Step 1.}  We will find an upper bound of $P\left(\dfrac{1}{TJ}S_T^n>Kn^{-2}\right)$ when  $n<\frac{J}{2\pi}$.
	
	Let
	\begin{equation*}
		D_n(t)=\int_{-\infty}^{t}e^{-\frac{1}{2}(t-s)}(t-s)dW_n(s),
	\end{equation*}
	then 
	\begin{equation}
		\begin{split}
			D_n(t)
			&=e^{-\frac{1}{2}t}\int_{-\infty}^{t}e^{\frac{1}{2}s}(t-s)dW_n(s)\\
			&=te^{-\frac{1}{2}t}\int_{-\infty}^{t}e^{\frac{1}{2}s}dW_n(s)-e^{-\frac{1}{2}t}\int_{-\infty}^{t}s\cdot e^{\frac{1}{2}s}dW_n(s).\label{covJ=2pin}
		\end{split}
	\end{equation}
	
	The covariance of each Ito's itegral from (\ref{covJ=2pin}):
	\begin{equation*}
		\E\left[\left(\int_{-\infty}^{t}e^{\frac{1}{2}s}dW_n(s)\right)^2\right]
		=\int_{-\infty}^{t}e^sds=e^t.
	\end{equation*}
	and 
	\begin{equation*}
		\begin{split}
			\E\left[\left(\int_{-\infty}^{t}s\cdot e^{\frac{1}{2}s}dW_n(s)\right)^2\right]
			=e^t(t-1)^2.
		\end{split}
	\end{equation*}
	
	Next,  we let
	\begin{equation*}
		I_t^{(1)}:=\int_{-\infty}^{t}e^{\frac{1}{2}s}dW_n(s)
	\end{equation*}
	and 
	\begin{equation*}
		I_t^{(2)}:=\int_{-\infty}^{t}s\cdot e^{\frac{1}{2}s}dW_n(s).
	\end{equation*}
	
	Then 
	\begin{equation*}
		\begin{split}
			D_n(t)&=te^{-\frac{1}{2}t}I_t^{(1)}-e^{-\frac{1}{2}t}I_t^{(2)}\\
			I_t^{(1)}&\stackrel{D}{=}B_{e^t}\\
			I_t^{(2)}&\stackrel{D}{=}B_{e^t(t-1)^2}.
		\end{split}
	\end{equation*}
	
	Since
	\begin{equation*}
		e^t+e^t(t-1)^2\leq e^t+e^{2t}\leq 2e^{2t},
	\end{equation*}
	we define
	\begin{equation*}
		\tilde{B}_t:=\underset{0\leq s\leq t}{\sup}\left|B_{2s}\right|,
	\end{equation*}
	then for each $i\in\{1, 2\}$, we have
	\begin{equation*}
		\left|I_t^{(i)}\right|\leq\underset{0\leq s\leq t}{\sup}\left|B_{2e^{2t}}\right|=\tilde{B}_{e^{2t}}.
	\end{equation*}
	
	Recall
	\begin{equation*}
		\begin{split}
			S_n(T)
			&=\int_{0}^{T}\left(\tilde{a}_n(t)\right)^2dt\\
			&=\gamma_n^2\int_{0}^{T}D_n^2(t)dt.
		\end{split}
	\end{equation*}
	
	Let $\tau>0$,
	\begin{equation}
		\begin{split}
			P\left(\dfrac{1}{TJ}S_n>Kn^{-2}\right)
			\leq\E\big[\exp\left(\tau\int_{0}^{T}D_n^2(t)dt\right)\big]\cdot \exp\left(-\dfrac{\tau KTJ}{n^2\gamma_n^2}\right).\label{markovJ=2pin}
		\end{split}
	\end{equation}
	
	Since $\exp\left(\tau\int_{0}^{T}D_n^2(t)dt\right)$ is a nonnegative random variable, 
	
	\begin{equation}
		\begin{split}
			\E\left[\exp\left(\tau\int_{0}^{T}D_n^2(t)dt\right)\right]
			=\int_{0}^{+\infty}P\left(\int_{0}^{T}\left(te^{-\frac{1}{2}t}I_t^{(1)}-e^{-\frac{1}{2}t}I_t^{(2)}\right)^2dt>\dfrac{\log s}{\tau}\right)ds.\label{markovJ=2pin-1}
		\end{split}	
	\end{equation}
	
	By Cauchy-Schwarz inequality, 
	\begin{equation}
		\begin{split}
			\E\left[\exp\left(\tau\int_{0}^{T}D_n^2(t)dt\right)\right]
			\leq \int_{0}^{+\infty}2P\left(\int_{0}^{T}2(t^2+1)e^{-t}\left(\tilde{B}_{e^{2t}}\right)^2dt>\dfrac{\log s}{2\tau}\right)ds.
		\end{split}\label{markovJ=2pin-2}
	\end{equation}
	
	\begin{lemma}
		$\exp\left(\dfrac{9}{10}t\right)\leq t^2+1$, for all $t\in\mathbb{R}$.
	\end{lemma}
	The proof of Lemma 4.3 is in Appendix \ref{exp9/10t}.

	By Lemma 4.3, we have
	\begin{equation*}
		\begin{split}
			P\left(\int_{0}^{T}2(t^2+1)e^{-t}\left(\tilde{B}_{e^{2t}}\right)^2dt>\dfrac{\log s}{2\tau}\right)
			\leq P\left(4\tau\int_{0}^{T}e^{-\frac{1}{10}t}\left(\tilde{B}_{e^{2t}}\right)^2dt>\log s\right).
		\end{split}
	\end{equation*}
	
	Back to (\ref{markovJ=2pin-1}) and (\ref{markovJ=2pin-2}), we have
	\begin{equation*}
		\begin{split}
			\E\left[\exp\left(\tau\int_{0}^{T}D_n^2(t)dt\right)\right]
			\leq 2\E\left[\exp\left(4\tau\int_{0}^{T}e^{-\frac{1}{10}t}\left(\tilde{B}_{e^{2t}}\right)^2dt\right)\right].
		\end{split}
	\end{equation*}
	
	Then continuing from (\ref{markovJ=2pin}), 
	\begin{equation}
		P\left(\dfrac{1}{TJ}S_n>Kn^{-2}\right)
		\leq2\E\left[\exp\left(4\tau\int_{0}^{T}e^{-\frac{1}{10}t}\left(\tilde{B}_{e^{2t}}\right)^2dt\right)\right]\cdot \exp\left(-\dfrac{\tau KTJ}{n^2\gamma_n^2}\right).\label{markovJ=2pin_2}
	\end{equation}
	
	\textbf{Step 2.} We will find an upper bound of $\int_{0}^{T}e^{-\frac{1}{10}t}\tilde{B}^2_{e^{2t}}dt$.

	Recall 
	\begin{equation*}
		\tilde{B}_t:=\underset{0\leq s\leq t}{\sup}\left|B_{2s}\right|.
	\end{equation*}
	
	Since $n$ is fixed, by (\ref{ineqoftildeB}), given $0\leq s\leq t$,
	\begin{equation*}
		\tilde{B}_t\leq 3\tilde{B}_s+\tilde{B}_{s,t}
	\end{equation*}
	where 
	\begin{equation*}
		\tilde{B}_{s,t}:=\underset{s\leq r\leq t}{\sup}\left|B_{2r}-B_{2s}\right|
	\end{equation*}
	and 
	\begin{equation*}
		\tilde{B}_{s,t}\stackrel{D}{=}\tilde{B}_{t-s}.
	\end{equation*}
	
	Let $T>2$ be an integer, by Lemma 4.1,
	\begin{equation}
		\begin{split}
			\int_{0}^{T}e^{-\frac{1}{10}t}\tilde{B}^2_{e^{2t}}dt
			&=\int_{0}^{1}e^{-\frac{1}{10}t}\tilde{B}^2_{e^{2t}}dt
			+\tilde{B}^2_{e^{2}}\sum_{k=1}^{T-1}18^{k}\int_{k}^{k+1}e^{-\frac{1}{10}t}dts\\
			&\quad +\sum_{m=1}^{T-2}\bigg[\bigg(\sum_{k=m+1}^{T-1}2\cdot18^{k-m}\int_{k}^{k+1}e^{-\frac{1}{10}t}\tilde{B}^2_{e^{2m},e^{2(m+1)}}dt\bigg)\\
			&\quad+2\int_{m}^{m+1}e^{-\frac{1}{10}t}\tilde{B}^2_{e^{2m},e^{2t}}\bigg].\label{J=2pininteB}
		\end{split}
	\end{equation}
	
	Let 
	\begin{equation*}
		\begin{split}
			(\text{I})&=\int_{0}^{1}e^{-\frac{1}{10}t}\tilde{B}^2_{e^{2t}}dt
			+\tilde{B}^2_{e^{2}}\sum_{k=1}^{T-1}18^{k}\int_{k}^{k+1}e^{-\frac{1}{10}t}dts\\
			(\text{II})&=\sum_{m=1}^{T-2}\bigg[\bigg(\sum_{k=m+1}^{T-1}2\cdot18^{k-m}\int_{k}^{k+1}e^{-\frac{1}{10}t}\tilde{B}^2_{e^{2m},e^{2(m+1)}}dt\bigg)
			+2\int_{m}^{m+1}e^{-\frac{1}{10}t}\tilde{B}^2_{e^{2m},e^{2t}}\bigg].
		\end{split}
	\end{equation*}
	
	We start with (I). Note that (I)$\in\mathcal{F}_{e^2}$.
	\begin{equation*}
		\begin{split}
			\sum_{k=1}^{T-1}18^{k}\int_{k}^{k+1}e^{-\frac{1}{10}t}dt
			\leq10\sum_{k=1}^{T-1}\bigg(\dfrac{18}{e^{\frac{1}{10}}}\bigg)^k.
		\end{split}
	\end{equation*}
	
	Then we have 
	\begin{equation*}
		\sum_{k=1}^{T-1}18^{k}\int_{k}^{k+1}e^{-\frac{1}{10}t}dt
		\leq10\sum_{k=1}^{T-1}18^k=10\cdot\dfrac{18^T-18}{17}
	\end{equation*}
	and
	\begin{equation*}
		\begin{split}
			(\text{I})
			\leq \int_{0}^{1}e^{-\frac{1}{10}t}\tilde{B}^2_{e^{2t}}dt+10\cdot18^T\tilde{B}^2_{e^{2}}.
		\end{split}
	\end{equation*}
	
	Since $e^{-\frac{1}{10}t}\leq1$ and $\tilde{B}^2_{e^{2t}}\leq\tilde{B}^2_{e^{2}}, \forall t\in[0,1]$,
	\begin{equation*}
		(\text{I})\leq \tilde{B}^2_{e^{2}}\bigg(1+10\cdot18^T\bigg).
	\end{equation*}
	
	Next, we consider 
	\[
	(\text{II})=\sum_{m=1}^{T-2}\bigg[\bigg(\sum_{k=m+1}^{T-1}2\cdot18^{k-m}\int_{k}^{k+1}e^{-\frac{1}{10}t}\tilde{B}^2_{e^{2m},e^{2(m+1)}}dt\bigg)
	+2\int_{m}^{m+1}e^{-\frac{1}{10}t}\tilde{B}^2_{e^{2m},e^{2t}}\bigg].
	\]

	For each $m\in\{1, 2,\dots, T-2\}$, let
	\[
	M_m=\bigg[\bigg(\sum_{k=m+1}^{T-1}2\cdot18^{k-m}\int_{k}^{k+1}e^{-\frac{1}{10}t}\tilde{B}^2_{e^{2m},e^{2(m+1)}}dt\bigg)
	+2\int_{m}^{m+1}e^{-\frac{1}{10}t}\tilde{B}^2_{e^{2m},e^{2t}}\bigg].
	\]

	Then $M_m\in\mathcal{F}_{e^{2(m+1)}}$ and $M_m\indep\mathcal{F}_{e^{2m}}$.
	
	\begin{equation*}
			\sum_{k=m+1}^{T-1}2\cdot18^{k-m}\int_{k}^{k+1}e^{-\frac{1}{10}t}\tilde{B}^2_{e^{2m},e^{2(m+1)}}dt
			\leq 10\tilde{B}^2_{e^{2m},e^{2(m+1)}}\cdot2\cdot18^{T-m}.
	\end{equation*}
	
	We have
	\begin{equation*}
		\begin{split}
			(\text{II})&\leq \sum_{m=1}^{T-2}\left[10\tilde{B}^2_{e^{2m},e^{2(m+1)}}\cdot2\cdot18^{T-m}+2\tilde{B}^2_{e^{2m},e^{2(m+1)}}\cdot10e^{-\frac{1}{10}m}\right]\\
			&=10\sum_{m=1}^{T-2}\tilde{B}^2_{e^{2m},e^{2(m+1)}}\left(2\cdot18^{T-m}+2e^{-\frac{1}{10}m}\right).
		\end{split}
	\end{equation*}
	
	Going back to (\ref{J=2pininteB}), we have
	\[
	\int_{0}^{T}e^{-\frac{1}{10}t}\tilde{B}^2_{e^{2t}}dt
	\leq\left(10\cdot18^T+1\right)\tilde{B}^2_{e^{2}}\\
	+10\sum_{m=1}^{T-2}\tilde{B}^2_{e^{2m},e^{2(m+1)}}\left(2\cdot18^{T-m}+2e^{-\frac{1}{10}m}\right).
	\]

	\textbf{Step 3.} We will choose a proper value for $\tau$ and find an explicit upper bound of\\ $E\big[e^{4\tau\int_{0}^{T}e^{-\frac{1}{10}t}\tilde{B}^2_{e^{2t}}dt}\big]$.\\
	
	From (\ref{markovJ=2pin_2})
	\begin{equation}
		\begin{split}
			\E&\left[e^{4\tau\int_{0}^{T}e^{-\frac{1}{10}t}\tilde{B}^2_{e^{2t}}dt}\right]
			\leq\E\left[\exp\left(4\tau\left(10\cdot18^T+1\right)\tilde{B}^2_{e^{2}} \right)\right]\\
			&\quad\cdot\prod_{m=1}^{T-2}\E\left[\exp\left(4\tau\cdot10\cdot\tilde{B}^2_{e^{2m},e^{2(m+1)}}\left(2\cdot18^{T-m}+2e^{-\frac{1}{10}m}\right)\right)\right].\label{318}
		\end{split}
	\end{equation}
	
	Let
	\begin{equation*}
		\begin{split}
			\text{(i)}&=\E\left[\exp\left(4\tau\left(10\cdot18^T+1\right)\tilde{B}^2_{e^{2}} \right)\right]\\
			\text{(ii)}&=\prod_{m=1}^{T-2}\E\left[\exp\left(4\tau\cdot10\cdot\tilde{B}^2_{e^{2m},e^{2(m+1)}}\left(2\cdot18^{T-m}+2e^{-\frac{1}{10}m}\right)\right)\right].
		\end{split}
	\end{equation*}
	
	We start with (i).
	\begin{equation*}
		\begin{split}
			\E\left[\exp\left(4\tau\left(10\cdot18^T+1\right)\tilde{B}^2_{e^{2}}\right)\right]
			&=\int_{1}^{+\infty}P\left(\exp\left(4\tau\left(10\cdot18^T+1\right)\tilde{B}^2_{e^{2}}\right)\geq x\right)dx\\
			&=4\int_{1}^{+\infty}P\left(B_{e^{2}}\geq\sqrt{\dfrac{\log x}{4\tau\left(10\cdot18^T+1\right)}}\right)dx.
		\end{split}
	\end{equation*}
	
	When $1-\dfrac{1}{8e^2\tau\left(10\cdot18^T+1\right)}<0$, that is $\tau<\dfrac{1}{8\left(10\cdot18^T+1\right)e^{2}}$, by Lemma 4.2, 
	\begin{equation*}
		\E\left[\exp\left(4\tau\left(10\cdot18^T+1\right)\tilde{B}^2_{e^{2}}\right)\right]\leq \dfrac{16\tau e^{2}\left(10\cdot18^T+1\right)}{\sqrt{1-8\tau\left(10\cdot18^T+1\right)e^{2}}}.
	\end{equation*}
	
	Next, we work on (ii). Let $\alpha'_m=2\cdot18^{T-m}$ and $\sigma_m^2=e^{2(m+1)}-e^{2m}$.
	
	For each $m\in\{1, 2, \dots, T-2\}$, 
	\begin{equation*}
		\begin{split}
			\E&\left[\exp\left(4\tau\cdot10\cdot\tilde{B}^2_{e^{2m},e^{2(m+1)}}\left(2\cdot18^{T-m}+2e^{-\frac{1}{10}m}\right)\right)\right]\\
			&=4\int_{1}^{\infty}P\left(B_{e^{2(m+1)}-e^{2m}}\geq\sqrt{\dfrac{\log x}{4\tau\cdot10\cdot(\alpha'_m+2e^{-\frac{1}{10}m})}}\right)dx.
		\end{split}
	\end{equation*}
	
	When $1-\dfrac{1}{2\sigma_m^2(4\tau\cdot10\cdot(\alpha'_m+2e^{-\frac{1}{10}m}))}<0$, that is 
	\begin{equation*}
		\tau<\dfrac{1}{8\cdot10\cdot(2\cdot18^{T-m}+2e^{-\frac{1}{10}m})(e^{2(m+1)}-e^{2m})},
	\end{equation*}
	by lemma 4.2, 
	\begin{equation*}
		\begin{split}
			\E&\left[\exp\left(4\tau\cdot10\cdot\tilde{B}^2_{e^{2m},e^{2(m+1)}}\left(2\cdot18^{T-m}+2e^{-\frac{1}{10}m}\right)\right)\right]\\
			&\leq\dfrac{16\tau\cdot10\cdot(\alpha'_m+2e^{-\frac{1}{10}m})\sigma_m^2}{\sqrt{1-8\tau\frac{1}{1-\omega_n}(\alpha'm+2e^{-\frac{1}{10}m})\sigma_m^2}}.
		\end{split}
	\end{equation*}
	
	By considering (i) and (ii), we want the $\tau$ to satisfiy the following conditions:
	\[
	\begin{cases}
		\tau<\dfrac{1}{8\cdot10\cdot(2^{1-m}\cdot9^{-m}\cdot18^T+2e^{-\frac{1}{10}m})(e^{2(m+1)}-e^{2m})}, & \forall m\in\{1,\dots, T-2\}\\
		\tau<\dfrac{1}{8\left(10\cdot18^T+1\right)e^{2}}.
	\end{cases}
	\]
	
	Then we let 
	\begin{equation*}
		\tau=\dfrac{1}{8\cdot10\cdot19^T\cdot e^{2T}}.
	\end{equation*}
	
	Back to (\ref{318}), we get
	\begin{equation*}
		\begin{split}
			\E\left[\exp\left(4\tau\int_{0}^{T}e^{-\frac{1}{10}t}\tilde{B}^2_{e^{2t}}dt\right)\right]
			&\leq\dfrac{16\tau e^{2}\left(10\cdot18^T+1\right)}{\sqrt{1-8\tau\left(10\cdot18^T+1\right)e^{2}}}\\
			&\quad\cdot\prod_{m=1}^{T-2}\dfrac{16\tau\cdot10\cdot(\alpha'_m+2e^{-\frac{1}{10}m})\sigma_m^2}{\sqrt{1-8\tau\cdot10\cdot(\alpha'_m+2e^{-\frac{1}{10}m})\sigma_m^2}}.
		\end{split}
	\end{equation*}
	
	Now we replace $\tau$ by its value, 
	\begin{equation}
		\begin{split}
			\E\left[\exp\left(4\tau\int_{0}^{T}e^{-\frac{1}{10}t}\tilde{B}^2_{e^{2t}}dt\right)\right]
			&\leq \dfrac{\frac{16 e^{2}\left(10\cdot18^T+1\right)}{8\cdot10\cdot19^T\cdot e^{2T}}}{\sqrt{1-8\left(10\cdot18^T+1\right)e^{2}\cdot\frac{1}{8\cdot10\cdot19^T\cdot e^{2T}}}}\\
			&\quad\cdot\prod_{m=1}^{T-2}\dfrac{\frac{16\cdot10\cdot(\alpha'_m+2e^{-\frac{1}{10}m})\sigma_m^2}{8\cdot10\cdot19^T\cdot e^{2T}}}{\sqrt{1-\frac{1}{19^T\cdot e^{2T}}(\alpha'_m+2e^{-\frac{1}{10}m})\sigma_m^2}}. \label{J=2pin2}
		\end{split}
	\end{equation}
	
	We simplify the right hand side of (\ref{J=2pin2}), 
	\begin{equation*}
		\begin{split}
			\E&\left[\exp\left(4\tau\int_{0}^{T}e^{-\frac{1}{10}t}\tilde{B}^2_{e^{2t}}dt\right)\right]\\
			&\leq \dfrac{2\left(10\cdot18^T+1\right)}{\sqrt{10\cdot19^T\cdot e^{2(T-1)}}\sqrt{10\cdot19^T\cdot e^{2(T-1)}-\left(10\cdot18^T+1\right)}}\\
			&\quad\cdot\prod_{m=1}^{T-2}\bigg[\dfrac{2(\alpha'_m+2e^{-\frac{1}{10}m})(e^{2(m+1)}-e^{2m})}{\sqrt{19^T\cdot e^{2T}}}\\
			&\qquad\cdot \dfrac{1}{\sqrt{19^T\cdot e^{2T}-(2^{1-m}\cdot9^{-m}\cdot18^T+2e^{-\frac{1}{10}m})(e^{2(m+1)}-e^{2m})}}\bigg].
		\end{split}
	\end{equation*}
	
	Since when $T\geq3$, $\left(10\cdot18^T+1\right)<10\cdot19^T$ and $\forall m\in\{1,\dots, T-2\}$,\\ $(2^{1-m}\cdot9^{-m}\cdot18^T+2e^{-\frac{1}{10}m})(e^{2(m+1)}-e^{2m})<19^T\cdot e^{2(T-1)}$,
	\begin{equation*}
		\begin{split}
			\E\bigg[\exp\bigg(&4\tau\int_{0}^{T}e^{-\frac{1}{10}t}\tilde{B}^2_{e^{2t}}dt\bigg)\bigg]\\
			&\leq	
			\dfrac{2\left(10\cdot18^T+1\right)}{\sqrt{10\cdot19^T\cdot e^{2(T-1)}}\sqrt{10\cdot19^T\cdot e^{2(T-1)}-10\cdot19^T}}\\
			&\quad\cdot\prod_{m=1}^{T-2}\dfrac{2(\alpha'_m+2e^{-\frac{1}{10}m})\cdot e^{2(m+1)}}{\sqrt{19^T\cdot e^{2T}}\sqrt{19^T\cdot e^{2T}-19^T\cdot e^{2(T-1)}}}\\
			&\leq	
			\dfrac{2\cdot10(18^T+\frac{1}{10})}{10\cdot19^T\sqrt{e^{2(2T-2)}-e^{2(T-1)}}}\\
			&\quad\cdot\prod_{m=1}^{T-2}\dfrac{2(\alpha'_m e^{2(m+1)}+2e^{(2-\frac{1}{10})m+2})}{19^T\sqrt{e^{2\cdot2T}-e^{2\cdot(2T-1)}}}	.
		\end{split}
	\end{equation*}
	
	Since $\forall m\in\{1,\dots, T-2\}$, 
	\begin{equation*}
		\alpha'_m=2^{1-m}\cdot9^{-m}\cdot18^T=2\cdot\dfrac{18^T}{18^m}.
	\end{equation*}
	
	We have
	\begin{equation*}
		\begin{split}
			\E&\left[\exp\left(4\tau\int_{0}^{T}e^{-\frac{1}{10}t}\tilde{B}^2_{e^{2t}}dt\right)\right]\\
			&\leq	\dfrac{2(18^T+\frac{1}{10})}{19^T\sqrt{e^{2(2T-2)}-e^{2(T-1)}}}
			\cdot\prod_{m=1}^{T-2}\dfrac{2\left(2\cdot18^Te^{2}\frac{e^{2m}}{18^m}+2e^{2}\cdot e^{(2-\frac{1}{10})m}\right)}{19^T\sqrt{e^{2\cdot2T}-e^{2\cdot(2T-1)}}}.
		\end{split}
	\end{equation*}
	
	Since $\dfrac{e^{2m}}{18^m}<1$, and  $e^{(2-\frac{1}{10})}m<18^T$, $\forall m\in\{1,\dots, T-2\}$. Then
	\begin{equation*}
		\begin{split}
			\E&\left[\exp\left(4\tau\int_{0}^{T}e^{-\frac{1}{10}t}\tilde{B}^2_{e^{2t}}dt\right)\right]\\
			&\leq\dfrac{2(18^T+\frac{1}{10})}{19^T\sqrt{e^{2(2T-2)}-e^{2(T-1)}}}\prod_{m=1}^{T-2}\dfrac{2\cdot2\cdot\left(2\cdot e^{2}\cdot18^T\right)}{19^Te^{2T}\sqrt{1-e^{-2}}}\\
			&\leq\dfrac{4\cdot18^T}{19^T\sqrt{e^{2(2T-2)}-e^{2(T-1)}}}\cdot\dfrac{8^{T-2}\cdot e^{2(T-2)}\cdot 18^{T(T-2)} }{19^{T(T-2)}e^{2T(T-2)}\left(\sqrt{1-e^{-2}}\right)^{T-2}}.
		\end{split}
	\end{equation*}
	
	\textbf{Step 4.} We are ready to get an upper bound of $P\left(\dfrac{1}{TJ}S_n>Kn^{-2}\right)$ and will show that 
	\begin{equation*}
		\underset{T\rightarrow+\infty}{\lim}P\left(\dfrac{1}{TJ}S_n>Kn^{-2}\right)^{1/T}=0.
	\end{equation*}
	
	Back to (\ref{markovJ=2pin_2}), 
	\begin{equation*}
		\begin{split}
			P\left(\dfrac{1}{TJ}S_n>Kn^{-2}\right)
			&\leq\dfrac{4\cdot18^T\cdot 8^{T-2}\cdot e^{2(T-2)}\cdot 18^{T(T-2)}}{19^T\sqrt{e^{2(2T-2)}-e^{2(T-1)}}}\\
			&\quad\cdot\dfrac{1}{19^{T(T-2)}e^{2T(T-2)}\left(\sqrt{1-e^{-2}}\right)^{T-2}}\\
			&\quad\cdot\dfrac{1}{\exp\left(\frac{1}{8\cdot10\cdot19^Te^{2T}}\frac{K}{n^2\gamma_n^2}TJ\right)}\\
			&=\dfrac{4\cdot18^T\cdot 8^{T-2}\cdot e^{2(T-2)}\cdot 18^{T(T-2)}}{19^{T^2-T}e^{2T(T-2)}}\\
			&\quad\cdot\dfrac{1}{\sqrt{e^{2(2T-2)}-e^{2(T-1)}}\cdot\left(\sqrt{1-e^{-2}}\right)^{T-2}}\\
			&\quad\cdot\dfrac{1}{\exp\left(\frac{1}{8\cdot10\cdot19^Te^{2T}}\frac{K}{n^2\gamma_n^2}TJ\right)}\\
			&=:(1)\cdot(2)\cdot(3).
		\end{split}
	\end{equation*}
	
	Since $n=\dfrac{J}{2\pi}$,  
	
	\begin{equation*}
		\underset{T\rightarrow+\infty}{\lim}(3)^{1/T}=\underset{T\rightarrow+\infty}{\lim}\dfrac{1}{\exp\left(\frac{1}{8\cdot10\cdot19^Te^{2T}}\frac{K}{n^2\gamma_n^2}J\right)}=\dfrac{1}{e^0}=1.
	\end{equation*}
	
	Next, we have
	\begin{equation*}
		\dfrac{1}{\sqrt{e^{2(2T-2)}-e^{2(T-1)}}\cdot \left(\sqrt{1-e^{-2}}\right)^{T-2}}\leq\dfrac{1}{\sqrt{e^{2(T-1)}}\cdot \left(\sqrt{1-e^{-2}}\right)^{T-2}}
	\end{equation*}
	
	Then 
	\begin{equation*}
		\underset{T\rightarrow+\infty}{\lim}(2)^{1/T}\leq\underset{T\rightarrow+\infty}{\lim}\dfrac{1}{e^{1-\frac{1}{T}}\cdot \left(\sqrt{1-e^{-2}}\right)^{1-\frac{2}{T}}}=\dfrac{1}{e\cdot \left(\sqrt{1-e^{-2}}\right)}.
	\end{equation*}
	
	We work on  $(1)^{1/T}$,
	\begin{equation}
		\underset{T\rightarrow+\infty}{\lim}	(1)^{1/T}
		=	\underset{T\rightarrow+\infty}{\lim}\dfrac{4^{\frac{1}{T}}\cdot18\cdot 8^{1-\frac{2}{T}}\cdot e^{2-\frac{4}{T}}\cdot 18^{T-2}}{19^{T-1}e^{2(T-2)}}=0
	\end{equation}
	
	Now, we let $G^{(3)}_T=\left((1)\cdot(2)\cdot(3)\right)^{1/T}$.
	
	Therefore, when $n=\dfrac{J}{2\pi}$, 
	\begin{equation}
		\underset{T\rightarrow+\infty}{\lim}\left(P\left(\dfrac{1}{TJ}S_n>Kn^{-2}\right)\right)^{1/T}=0.
	\end{equation}
	
	Then we know that 
	
	\begin{equation*}
		\underset{T\rightarrow+\infty}{\lim}P\left(R(T,J)\geq K\right)^{1/T}
		\leq 3\cdot \underset{T\rightarrow+\infty}{\lim}\left(\max\left\{G^{(1)}_T, G^{(2)}_T,G^{(3)}_T\right\}\right)=0.
	\end{equation*}

\end{proof}

	\section{Damped wave equation with the noise F and its solution} \label{dampedwaveequationwiththenoiseF}
	Let $\phi\in\mathcal{C}^\infty(\mathbb{R})$ with $\phi '(0)=\phi ' (J)=0$. By multiplying (\ref{main system}) by $\phi(x)$ and integrating over both variables, we get
	\begin{equation}
		\begin{split}
			\int_{0}^{J}[\partial_tu(t.x)&-\partial_tu(0,x)]\phi(x)dx+\int_{0}^{J}[u(t.x)-u(0,x)]\phi(x)dx\\
			&=\int_{0}^{t}\int_{0}^{J}u(s,x)\phi''(x)dxds+\int_{0}^{t}\int_{0}^{J}\phi(x)F(dxds).\label{integ_form}
		\end{split}
	\end{equation}
	
	Let $\{\varphi_{n}\}_{n\in\mathbb{Z}}$ be a complete set of  eigenfunctions of the Laplacian $\Delta$ satisfying the Neumann boundary condition:
	\begin{equation*}
		\begin{split}
			\varphi_{n}(x)&=c_n\cos\big(\frac{n\pi}{J}x\big)\quad n\in\mathbb{Z}\\
			\Delta\varphi_n&=\lambda_n\varphi_n
		\end{split}
	\end{equation*}
	where 
	\[
	c_n=
	\begin{cases}
		\sqrt{\frac{2}{J}},\quad n\neq 0\\
		\sqrt{\frac{1}{J}}, \quad  n=0
	\end{cases}
	\]
	
	and
	\[
	\lambda_n=
	\begin{cases}
		-\left(\dfrac{n\pi}{J}\right)^2,&\quad n\neq 0\\
		0, &\quad  n=0.
	\end{cases}
	\]
	
	Considering the Fourier series of $u$, 
	\begin{equation}
		u(t,x)=\underset{n\in\mathbb{N}}{\sum}a_n(t)\varphi_{n}(x),\text{ where  } \Delta\varphi_{n}=\lambda_{n}\varphi_{n}.\label{ufourierseries}
	\end{equation}
	The series converges in $L^2\left([0,J]\times\Omega\right)$. The proof is in Appendix \ref{fourierseriesofmildsolutionu}

	Let $\phi$ in (\ref{integ_form}) be the eigenfunction $\varphi_n$. According to the fourier series of $u$, we have that for each $n\in\mathbb{N}$,
	\begin{equation*}
		\frac{d}{dt}a_n(t)-\frac{d}{dt}a_n(0)+a_n(t)-a_n(0)=\int_{0}^{t}\lambda_na_n(s)ds+\int_{0}^{t}\gamma_nW_n(ds).
	\end{equation*}
	
	
	Let
	\[\begin{cases}
		X^n_t&=a_n=\text{the position of the above stochastic oscillator}\\
		V^n_t&=\frac{d}{dt}a_n=\text{velocity}.
	\end{cases}
	\]
	We have the following stochastic differential equation.
	\[\begin{cases}
		dX^n_t&=V^n_t dt\\
		dV^n_t&=\big(-V^n_t+\lambda_n X^n_t\big)dt+\gamma_ndW_n(t).
	\end{cases}\]
	
	\subsection{When n is nonzero}
	
	When $n\neq 0$, we have the following stochastic differential equation,
	\begin{equation}
		d\begin{pmatrix} X^n_t\\ V^n_t \end{pmatrix}=\begin{pmatrix}0 & 1\\ \lambda_n & -1\end{pmatrix}\begin{pmatrix}X^n_t \\ V^n_t\end{pmatrix}dt+\begin{pmatrix}0 \\ \gamma_n \end{pmatrix}dW_n(t). \label{SDE}
	\end{equation} 
	
	Defining $M_n=\begin{pmatrix}
		0&1\\\lambda_n&-1
	\end{pmatrix}$,
	$\alpha_n=\begin{pmatrix}
		0 \\ \gamma_n
	\end{pmatrix}$, 
	and  $\vec{X}^n_t=\begin{pmatrix}X^n_t\\V^n_t\end{pmatrix}$, we have 
	\begin{equation}
		d\vec{X}^n_t=M_n\cdot\vec{X}^n_tdt+\gamma_ndW_n(t).\label{SDEnnot0}
	\end{equation}
	
	Multiplying $e^{-M_nt}$ to both sides of (\ref{SDEnnot0}), we have
	\begin{equation*}
		\begin{split}
			e^{-M_nt}d\vec{X}_n^t&=e^{-M_nt}\cdot M_n\cdot\vec{X}_n^tdt+e^{-M_nt}\gamma_ndW_n(t)\\
			e^{-M_nt}d\vec{X}_n^t-e^{-M_nt}\cdot M_n\cdot\vec{X}_n^tdt&=e^{-M_nt}\gamma_ndW_n(t)\\
			d(e^{-M_nt}\vec{X}_n^t)&=e^{-M_nt}\gamma_ndW_n(t).
		\end{split}
	\end{equation*}
	
	Then we have
	\begin{equation}
		\begin{split}
			&d\bigg(e^{-M_nt}\begin{pmatrix}X^n_t\\V^n_t\end{pmatrix}\bigg)=e^{-M_nt}\begin{pmatrix}0\\\gamma_n\end{pmatrix}dW_n(t)\\
			&\begin{pmatrix}X^n_t\\V^n_t\end{pmatrix}=e^{M_nt}\begin{pmatrix}x^n_0\\v^n_0\end{pmatrix}+e^{M_nt}\int_{0}^{t}e^{-M_ns}\begin{pmatrix}0\\\gamma_n\end{pmatrix}dW_n(s).\label{Xsoln}
		\end{split}
	\end{equation}
	where
	$x^n_0$ and $v^n_0$ are initial data of $X^n$ and $Y^n$ respectively.\\
	Note that since $tM_n$ and $(-s)M_n$ commute for all $t, s\in\mathbb{R}$, $e^{tM_n}\cdot e^{-sM_n}=e^{M_n(t-s)}$.\\
	Now we start to solve for $X^n$. \\
	
	Recall that 
	$M_n=\begin{pmatrix}
		0&1\\\lambda_n&-1
	\end{pmatrix}$,
	where $\lambda_n=-k_n^2=-\big(\frac{n\pi}{J}\big)^2$. 
	
	First, we need to find $e^{M_nt}$ using the eigenvalues and eigenvectors of the matrix $M_n$, where $M_n$ has characteristic polynomial:
	\begin{equation*}
		p(t)=\det(M_n-tI)=\bigg|\begin{matrix}-t&1\\-k_n^2&-1-t\end{matrix}\bigg|=t^2+t+k_n^2.
	\end{equation*}
	
	Setting $p(t)=0$, we find that two eigenvalues of $M_n$ are:
	\begin{equation*}
		c_{n}^{(1)}=\frac{-1+\sqrt{1-4k_n^2}}{2},\text{ and }
		c_{n}^{(2)}=\frac{-1-\sqrt{1-4k_n^2}}{2}.
	\end{equation*}

	There are three cases to discuss:
	\begin{enumerate}
		\item	$1-4k_{n}^2>0$, which is equivalent to $J>2n\pi$;
		\item	$1-4k_{n}^2=0$, which is equivalent to $J=2n\pi$; 
		\item	$1-4k_{n}^2<0$. which is equivalent to $J<2n\pi$. 
	\end{enumerate}
	
	\begin{enumerate}
		\item[\textbf{Case 1}] when $1-4k_n^2>0$, i.e $J>2n\pi$.\\
		
		In this case, $c^{(1)}_n$ and $c^{(2)}_n$ are two distinct eigenvalues. Their corresponding eigenvectors are $v^{(1)}_n=\begin{pmatrix}
			1 ,&c^{(1)}_n
		\end{pmatrix}^T$
		and
		$v^{(2)}_n=\begin{pmatrix}
			1 , &c^{(2)}_n
		\end{pmatrix}^T
		$. \\
		Let $V_n=\begin{pmatrix}
			v^{(1)}_n ,& v^{(2)}_n
		\end{pmatrix}$, and $D_n$ be the diagonal matrix with diagonal entries $c^{(1)}_n$ and $c^{(2)}_n$. Then we have
		\begin{equation*}
			\begin{split}
				M_n=V_nD_nV^{-1}_n,\qquad
				e^{M_nt}=V_ne^{D_nt}V^{-1}_n
			\end{split}
		\end{equation*}
		
		We get
		\begin{equation*}
			\begin{split}
				X^n_t=
				&-\frac{1}{w_n}\bigg(\bigg(\frac{-1-w_n}{2}e^{\frac{-1+w_n}{2}t}-\frac{-1+w_n}{2}e^{\frac{-1-w_n}{2}t}\bigg)x^n_0+\bigg(e^{\frac{-1-w_n}{2}t}-e^{\frac{-1+w_n}{2}t}\bigg)v^n_0\bigg)\\
				&-\frac{\gamma_{n}}{w_n}\int_{0}^{t}e^{\frac{-1-w_n}{2}(t-s)}-e^{\frac{-1+w_n}{2}(t-s)}dW_n(s).
			\end{split}
		\end{equation*}
		
		\item[\textbf{Case 2}] when $1-4k_n^2=0$, i.e $J=2n\pi$.
		
		$\lambda_{n}=-k_n^2=-\frac{1}{4}$. Then $M_n=\begin{pmatrix}0&1\\-\frac{1}{4}&-1\end{pmatrix}$ has two repeated eigenvalues, $c^{(1)}_n=c^{(2)}_n=-\frac{1}{2}$. We can write $M_n=PAP^{-1}$. \\
		
		We have
		\begin{equation*}
			A=\begin{pmatrix}
				-\frac{1}{2}&1\\0&-\frac{1}{2}
			\end{pmatrix},
			\quad
			P=\begin{pmatrix}
				1&0\\-\frac{1}{2}&1
			\end{pmatrix},
			\quad
			P^{-1}=\begin{pmatrix}
				1&0\\\frac{1}{2}&1
			\end{pmatrix}.
		\end{equation*}
		
		Thus, we have 
		\begin{equation*}
			e^{M_n}=Pe^AP^{-1}=Pe^{c^{(1)}_nI+N}P^{-1}=Pe^{c^{(1)}_nI}e^NP^{-1}
		\end{equation*}
		where $A=c^{(1)}_nI+N$ and $N=\begin{pmatrix}
			0&1\\0&0
		\end{pmatrix}$ is nilpotent with index 2. $N$ commutes with $c^{(1)}_nI$.\\
		
		Since $(tN)^2=\begin{pmatrix}
			0&0\\0&0
		\end{pmatrix}$ for all $t\in\mathbb{R}$,
		\begin{equation*}
			e^{tN}=I+\sum_{k=1}^{+\infty}\dfrac{(tN)^k}{k!}=I+tN=\begin{pmatrix}
				1&t\\0&1
			\end{pmatrix}.
		\end{equation*}
		
		Then $\forall t\in\mathbb{R}$,
		\begin{equation*}
			e^{M_nt}=\frac{1}{4}e^{-\frac{1}{2}t}\begin{pmatrix}4+2t & 4t \\ -t & 4-2t\end{pmatrix}.
		\end{equation*}
		
		Then we have
		\begin{equation*}
			X^n_t=\frac{1}{4}e^{-\frac{1}{2}t}[(4+2t)x_0^n+4tv_0^n]+\int_{0}^{t}e^{-\frac{1}{2}(t-s)}\gamma_{n}(t-s)dW_n(s).
		\end{equation*}

		\item[\textbf{Case 3}] when $1-4k_n^2<0$, i.e $0<J<2\pi n.$
		
		$M_n=\begin{pmatrix}
			0 & 1\\ \lambda_{n} & -1
		\end{pmatrix}$
		has two complex eigenvalues $c_n^{(1)}=\frac{-1+(\sqrt{4k_n^2-1})i}{2}$ and $c_n^{(2)}=\frac{-1-(\sqrt{4k_n^2-1})i}{2}$, where $\lambda_n=-k_n^2$.\\
		
		Let $\alpha_n=-\frac{1}{2}$ and $\omega_n=\frac{\sqrt{4k_n^2-1}}{2}$.  Then $M_n=P_nD_nP_n^{-1}$, where
		\begin{equation*}
			D_n=\begin{pmatrix}
				\alpha_n & \omega_n\\ -\omega_n & \alpha_n
			\end{pmatrix},
			\quad
			P_n=\begin{pmatrix}
				1 & 0 \\ \alpha_n & \omega_n
			\end{pmatrix},
			\quad \text{ and }
			P_n^{-1}=\begin{pmatrix}
				1 & 0 \\ -\frac{\alpha_n}{\omega_n} & \frac{1}{\omega_n}
			\end{pmatrix}.
		\end{equation*}
		
		We have
		\begin{equation*}
			e^{D_nt}=e^{\alpha_nt}\begin{pmatrix}
				\cos(\omega_nt)&\sin(\omega_nt)\\-\sin(\omega_nt)&\cos(\omega_nt)
			\end{pmatrix}.
		\end{equation*}
		
		Then 
		\begin{equation*}
			\begin{split}
				e^{M_nt}
				&=P_ne^{D_nt}P_n^-1\\
				&=\begin{pmatrix}1 & 0 \\ \alpha_n & \omega_n\end{pmatrix}e^{\alpha_nt}
				\begin{pmatrix}\cos\omega_nt & \sin\omega_nt\\ -\sin\omega_nt & \cos\omega_nt\end{pmatrix}
				\begin{pmatrix}1 & 0 \\ -\frac{\alpha_n}{\omega_n} & \frac{1}{\omega_n}\end{pmatrix}\\
				&=e^{-\frac{1}{2}t}\begin{pmatrix}
					\cos(\omega_nt)+\frac{1}{2\omega_n}\sin(\omega_nt) & \frac{1}{\omega_n}\sin(\omega_nt)\\
					-\omega_n\sin(\omega_nt)-\frac{1}{4\omega_n}\sin(\omega_nt) & -\frac{1}{2\omega_n}\sin(\omega_nt)+\cos(\omega_nt)
				\end{pmatrix}.
			\end{split}
		\end{equation*}
		
		From (\ref{Xsoln}),
		\begin{equation*}
			\begin{split}
				X^n_t=
				&e^{-\frac{1}{2}t}\big[\big(\cos(\omega_nt)+\frac{1}{2\omega_n}\sin(\omega_nt)\big)x_0^n+\frac{1}{\omega_n}\sin(\omega_nt)v_0^n\big]\\
				&+\int_{0}^{t}e^{-\frac{1}{2}(t-s)}\frac{\gamma_{n}}{\omega_n}\sin(\omega_n(t-s))dW_n(s).
			\end{split}
		\end{equation*}
		
	\end{enumerate}
	
	\subsection{n=0}
	
	When $n=0$, we have the following,
	\[\begin{cases}
		dX^0_t&=V^0_t dt\\
		dV^0_t&=-V^0_tdt+\gamma_0dW_0(t).
	\end{cases}\]
	
	Multiplying $e^t$ to the equation of $dV^0_t$, we get
	\begin{equation*}
		d\big(e^tV^0_t\big)=\gamma_0e^tdW_0(t).
	\end{equation*}
	
	Taking integrals on both sides with respect to time, we have
	\begin{equation*}
		\begin{split}
			V^0_t&=\gamma_0\int_{0}^{t}e^{s-t}dW_0(s)+v_0e^{-t}\\
			X^0_t&=\int_{0}^{t}V_s^0ds\\
			&=\gamma_0\int_{0}^{t}\int_{0}^{s}e^{\alpha-s}dW_0(\alpha)ds+v_0(1-e^{-t})+x_0
		\end{split}
	\end{equation*}
	where $x_0$ and $v_0$ are initial data. \\ 
	
	Since for every $s\in[0,t]$ where $t\in[0,T]$, $e^{\alpha-s}\leq1$ uniformly for $\alpha\in[0,s]$, we can apply stochastic Fubini to the integral term of $X_t^0$. In other words, 
	\begin{equation*}
		X^0_t=\gamma_0\int_{0}^{t}\int_{\alpha}^{t}e^{\alpha-s}dsdW_0(\alpha)+v_0(1-e^{-t})+x_0.
	\end{equation*}
	
	\section{Fourier series of the mild solution u.}\label{fourierseriesofmildsolutionu}
	Since 
	\begin{equation*}
		G_t^{\mathbb{R}}(x)=\dfrac{1}{2}e^{-t/2}\text{sgn}(t)\text{I}_0\left(\dfrac{1}{2}\sqrt{t^2-x^2}\right)\chi_{[-|t|, |t|]}(x)
	\end{equation*}
	is supported on $\left[-|t|, |t|\right]$ for $t\in[0,T]$, 
	\begin{equation*}
		\begin{split}
			G_t(x,y)
			&=\sum_{n\in\mathbb{Z}}\left(G_t^{\mathbb{R}}(y+x-2nJ)+G_t^{\mathbb{R}}(y-x-2nJ)\right)\\
			&=\sum_{|n|\leq M_T, n\in\mathbb{Z}}\left(G_t^{\mathbb{R}}(y+x-2nJ)+G_t^{\mathbb{R}}(y-x-2nJ)\right)
		\end{split}
	\end{equation*}
	where $M_T=\ceil[\bigg]{\dfrac{T}{J}}+1$.
	
	Then in the mild form, we can expand $G$ to $G^\mathbb{R}$,
	\begin{equation*}
		\begin{split}
			u(t,x)
			&=\int_{0}^{J}\partial_{t}G_t(x,y)u_0(y)dy+\int_{0}^{J}G_t(x,y)\big(\frac{1}{2}u_0(y)+u_1(y)\big)dy\\
			&\quad+\int_{0}^{t}\int_{0}^{J}G_{t-s}(x,y)F(dyds)\\
			&=\sum_{|n|\leq M_T, n\in\mathbb{Z}}\bigg[\int_{0}^{J}\partial_{t}G_t^{\mathbb{R}}(y+x-2nJ)u_0(y)dy\\
			&\quad+\int_{0}^{J}\partial_{t}G_t^{\mathbb{R}}(y-x-2nJ)u_0(y)dy\\
			&\quad+\int_{0}^{J}G_t^{\mathbb{R}}(y+x-2nJ)\big(\frac{1}{2}u_0(y)+u_1(y)\big)dy\\
			&\quad+\int_{0}^{J}G_t^{\mathbb{R}}(y-x-2nJ)\big(\frac{1}{2}u_0(y)+u_1(y)\big)dy\\
			&\quad+\int_{0}^{t}\int_{0}^{J}G_{t-s}^{\mathbb{R}}(y+x-2nJ)F(dyds)\\
			&\quad+\int_{0}^{t}\int_{0}^{J}G_{t-s}^{\mathbb{R}}(y-x-2nJ)F(dyds)\bigg].
		\end{split}
	\end{equation*}
	
	By Theorem 5.3 of \cite{Nu20}, Young's inequality, and $\Vert I_0\Vert<+\infty$, we know that 
	\begin{equation*}
		\E\left[\int_{0}^{J}|u(t,x)|^2dx\right]<+\infty,
	\end{equation*}
	then the Fourier series of $u$ in (\ref{ufourierseries}) converges in $L^2\left([0,J]\times\Omega\right)$.
	
	\section{Drift term}\label{driftterm}
	
	We add the drift term $a\varphi_1$ to the noise $F$. Let $v$ be the solution of the following model
	\begin{equation*}
		\begin{split}
			\partial_{t}^{2}v+\partial_{t}v&=\Delta v+a\varphi_1\\
			v(0,x)=v_{0}(x), &\quad \partial_{t}v(0,x)=v_{1}(x) \quad (x, t)\in[0,J]\times\mathbb{R}_+\\
			\partial_{x}v(t,0)&=\partial_{x}(t,J)=0
		\end{split}
	\end{equation*}
	where $\varphi_1=\sqrt{\dfrac{2}{J}}\cos\left(\dfrac{\pi}{J}x\right)$.
	
	We can write $v(t,x)=d(t)\varphi_1(x)$ where $d$ is a function depending on $t$. 
	
	To solve the above model, we apply the same technique as in Appendix \ref{dampedwaveequationwiththenoiseF}. Considering the process in the future, we get
	\begin{equation*}
		d(t)=\begin{cases}
			\frac{a}{\omega_1}\int_{-\infty}^{t}e^{\frac{-1+\omega_1}{2}(t-s)}-e^{\frac{-1-\omega_1}{2}(t-s)}ds & J>2\pi \\
			a\int_{-\infty}^{t}e^{-\frac{1}{2}(t-s)}\gamma_{1}(t-s)ds & J=2\pi \\
			\frac{a}{\omega_1}\int_{-\infty}^{t}e^{-\frac{1}{2}(t-s)}\frac{\gamma_{1}}{\omega_1}\sin(\omega_1(t-s))ds & 0<J<2\pi .
		\end{cases}
	\end{equation*}

	That is 
	\begin{equation}
		d(t)=\begin{cases}
			\dfrac{a}{\omega_1}\left(\dfrac{2}{1-\omega_1}-\dfrac{2}{1+\omega_1}\right) & J>2\pi \\
			2a & J=2\pi \\
			\dfrac{4a}{5\omega_1} & 0<J<2\pi .
		\end{cases}\label{C'driftconstant}
	\end{equation}
	
	\section{Proof of Lemma \ref{lowerboundlemma3.1}}\label{lowerboundofsigmax1x2square}
	
	\begin{proof}
		The proof basically follows the proof of Lemma 2.7 of \cite{MN22}. The difference is that we need to set $x=\frac{x_1+x_2}{2J}$ and $h=\frac{x_1-x_2}{J}$.
		
		Let $\mathcal{U}(x_1, x_2)=\tilde{U}(0,x_1)-\tilde{U}(0,x_2)$. then
		\begin{equation*}
			\Var\big[\mathcal{U}(x_1, x_2)\big]=\E\big[\mathcal{U}(x_1, x_2)^2\big].
		\end{equation*}
		
		We have 
		\begin{equation*}
			\begin{split}
				\mathcal{U}(x_1, x_2)^2
				&=\bigg(\sum_{n\neq 0}\tilde{a}_n(0)\big(\varphi_n(x_1)-\varphi_n(x_2)\big)\bigg)^2\\
				&=\sum_{n, m\neq 0}\tilde{a}_n(0)\tilde{a}_m(0)\big(\varphi_n(x_1)-\varphi_n(x_2)\big)\big(\varphi_m(x_1)-\varphi_m(x_2)\big)\\
				&=\sum_{n\neq0}\tilde{a}_n(0)^2\big(\varphi_n(x_1)-\varphi_n(x_2)\big)^2\\
				&+2\sum_{m>n>0}\tilde{a}_n(0)\tilde{a}_m(0)\big(\varphi_n(x_1)-\varphi_n(x_2)\big)\big(\varphi_m(x_1)-\varphi_m(x_2)\big).
			\end{split}
		\end{equation*}
		
		Then we take expectation on both sides
		
		\begin{equation*}
			\begin{split}
				\E\bigg[\mathcal{U}(x_1, x_2)^2\bigg]
				&=\sum_{n\neq0}\E\bigg[\tilde{a}_n(0)^2\big(\varphi_n(x_1)-\varphi_n(x_2)\big)^2\bigg]\\
				&=\sum_{n\neq0}\E\bigg[\tilde{a}_n(0)^2\bigg]\big(\varphi_n(x_1)-\varphi_n(x_2)\big)^2\\
				&+2\sum_{m>n>0}\E\bigg[\tilde{a}_n(0)\tilde{a}_m(0)\bigg]\big(\varphi_n(x_1)-\varphi_n(x_2)\big)\big(\varphi_m(x_1)-\varphi_m(x_2)\big)\\
				&=\sum_{n\neq0}\E\bigg[\tilde{a}_n(0)^2\bigg]\big(\varphi_n(x_1)-\varphi_n(x_2)\big)^2
			\end{split}
		\end{equation*}
		where
		\begin{enumerate}
			\item[$\cdot$] When $J<2\pi n$, $\E[\mid\tilde{a}_n(t)\mid^2]=\frac{\gamma_n^2}{2(1+4\omega_n^2)}$.
			\item[$\cdot$] When $J=2\pi n$, $\E[\mid\tilde{a}_n(t)\mid^2]=2\gamma_n^2$.
			\item[$\cdot$] When $J>2\pi n$, $\E[\mid\tilde{a}_n(t)\mid^2]=\frac{2\gamma_n^2}{\omega_n(1-\omega_n^2)}$.
		\end{enumerate}
		
		Since $\gamma_n\rightarrow0$ as $n\rightarrow\infty$ and $\omega_n=\sqrt{\left|1-4\big(\frac{n\pi}{J}\big)^2\right|}$, for all $n$, $\E[\mid\tilde{a}_n(t)\mid^2]$ is bounded for all $t\in\mathbb{R}$. 
		
		Let $c_n=\E[\tilde{a}_n(0)^2]$, 
		we recall $\varphi_n(x)=\sqrt{\dfrac{2}{J}}\cos\left(\dfrac{n\pi}{J}x\right)$, then
		\begin{equation*}
			\begin{split}
				\sigma^2&=\E\bigg[\mathcal{U}(x_1, x_2)^2\bigg]=\sum_{n\neq0}\E\bigg[\tilde{a}_n(0)^2\bigg]\big(\varphi_n(x_1)-\varphi_n(x_2)\big)^2\\
				&=\dfrac{4}{J}\sum_{n\neq0}c_n\left[\cos\left(\dfrac{n\pi}{J}x_1\right)-\cos\left(\dfrac{n\pi}{J}x_2\right)\right]^2.
			\end{split}
		\end{equation*}
		
		Since $\cos(a)-\cos(b)=-2\sin\left(\frac{a-b}{2}\right)\sin\left(\frac{a+b}{2}\right)$, we have
		\begin{equation*}
			\sigma^2=\dfrac{16}{J}\sum_{n=1}^{+\infty}c_n\sin^2\left(\dfrac{n\pi}{2J}(x_1-x_2)\right)\sin^2\left(\dfrac{n\pi}{2J}(x_1+x_2)\right).
		\end{equation*}
		
		Let $x=\dfrac{x_1+x_2}{2J}$ and $h=\dfrac{x_1-x_2}{J}$. By symmetry, we assume $x\in\left[0,\dfrac{1}{2}\right]$.\\
		It suffices to prove the estimate for $h<\delta_0$ where $\delta_0>0$ is small.
		
		Since $c_n>0, \forall n$, for any $N>0$,
		\begin{equation*}
			\begin{split}
				\sigma^2
				&=\dfrac{16}{J}\sum_{n=1}^{+\infty}c_n\sin^2\left(\dfrac{n\pi}{2}h\right)\sin^2\left(n\pi x\right)\\
				&\geq\dfrac{16}{J}\sum_{n=1}^{N}c_n\sin^2\left(\dfrac{n\pi}{2}h\right)\sin^2\left(n\pi x\right).
			\end{split}
		\end{equation*}
		
		Note that $x_2=\left(x-\dfrac{h}{2}\right)J$ and $x_2\geq0$, we have $x\geq\dfrac{h}{2}$.
		
		Let $\delta_1>0$ be a small number, and 
		\begin{equation*}
			N=[2h^{-1}(1-\delta_1)], \text{ where }[\cdot] \text{ represents the greatest integer function.}
		\end{equation*}
		
		Then we have
		\begin{equation}
			\pi(1-\delta_1)-1\leq \pi N2^{-1}h\leq \pi(1-\delta_1). \label{*}
		\end{equation}
		
		Given any $n$ such that $1\leq n\leq N$, we have
		\begin{equation*}
			\sin\left(\dfrac{\pi}{2}nh\right)\geq cnh
		\end{equation*}
		where $c$ is a constant.
		
		Let $m_1=16\underset{1\leq n\leq N}{\min}c_n$, then 
		\begin{equation*}
			\begin{split}
				\sigma^2
				&\geq cm_1h^2\frac{1}{J}\sum_{n=1}^{N}\sin^2(n\pi x)\\
				&=cm_1h^2\frac{1}{J}\left[2N+1-\dfrac{\sin((2N+1)\pi x)}{\sin(\pi x)}\right].
			\end{split}
		\end{equation*}
		
		We want to show $2N+1-\dfrac{\sin((2N+1)\pi x)}{\sin(\pi x)}$ is of order $N$. So we need to show that for some small number $\delta_2>0$,
		\begin{equation}
			\dfrac{\sin((2N+1)\pi x)}{\sin(\pi x)}\leq 2N(1-\delta_2). \label{**}
		\end{equation}
		
		Let $\delta_3>0$, since $\dfrac{h}{2}\leq x\leq \dfrac{1}{2}$ and $h<\delta_0$, we can choose $\delta_0$ small enough such that
		\begin{equation*}
			\sin(\pi x)\geq\sin\left(\frac{\pi}{2}h\right)\geq\frac{\pi}{2}h(1-\delta_3).
		\end{equation*}
		
		By (\ref{*}), we have
		\begin{equation*}
			\begin{split}
				\dfrac{\sin((2N+1)\pi x)}{\sin(\pi x)}
				&\leq\dfrac{1}{\frac{\pi}{2}h(1-\delta_3)}\\
				&=N\dfrac{1}{(N\pi2^{-1}h)(1-\delta_3)}\\
				&\leq N\dfrac{1}{[\pi(1-\delta_1)-1](1-\delta_3)}.
			\end{split}
		\end{equation*}
		
		The above inequalitiy verifies (\ref{**}) provided $\delta_1$, $\delta_2$ and $\delta_3$ are small enough.  So we have
		\begin{equation*}
			\begin{split}
				\sigma^2
				&\geq cm_1h^2\frac{1}{J}[2N+1-\dfrac{\sin((2N+1)\pi x)}{\sin(\pi x)}]\\
				&\geq cm_1h^2N\delta_2\frac{1}{J}\\
				&\geq 2cm_1h\delta_2\frac{1}{J}, \quad\text{ by } (\ref{*})\\
				&=2\tilde{c}\dfrac{|x_1-x_2|}{J^2}\delta_2, \quad\text{ for all }|x_1-x_2|\leq \delta_0
			\end{split}
		\end{equation*}
		where $\tilde{c}(J)=cm_1$.
		
	\end{proof}
	
	\section{Proof of Lemma 4.1} \label{proofofintegraleexpbrownianmotion}
	
	\begin{proof}
		From (\ref{Btildeupperbound}), we have the following decomposition of the integral
		$\int_{0}^{T}e^{-at}\tilde{B}_{e^{bt}}^2dt$,
		\begin{equation*}
			\begin{split}
				\int_{0}^{T}&e^{-at}\tilde{B}_{e^{bt}}^2dt
				=\int_{0}^{1}e^{-at}\tilde{B}_{e^{bt}}^2dt+\int_{1}^{2}e^{-at}\tilde{B}_{e^{bt}}^2dt+\cdots+\int_{T-1}^{T}e^{-at}\tilde{B}_{e^{bt}}^2dt\\
				&\leq\int_{0}^{1}e^{-at}\tilde{B}^2_{e^{bt}}dt
				+\sum_{k=1}^{T-1}\int_{k}^{k+1}e^{-at}\left(\sum_{m=1}^{k-1}3^{k-m}\tilde{B}_{e^{mb},e^{(l+m)b}}+3^k\tilde{B}_{e^b}+\tilde{B}_{e^{kb},e^{bt}}\right)^2dt
			\end{split}
		\end{equation*}
		where the sum over $m$ is zero when $k=1$.
		
		Then we apply the Cauchy-Schwarz inequality to the integrals from above,
		\begin{equation*}
			\begin{split}
				\int_{0}^{T}&e^{-at}\tilde{B}_{e^{bt}}^2dt
				\leq \int_{0}^{1}e^{-at}\tilde{B}_{e^{bt}}^2dt\\
				&\hspace{2cm} +\sum_{k=1}^{T-1}\int_{k}^{k+1}e^{-at}
				\Bigg[\sum_{m=1}^{k-1}2^{k+1-m}\left(3^{k-m}\tilde{B}_{e^{mb},e^{(l+m)b}}\right)^2\\
				&\hspace{5cm}
				+2^k\left(3^k\tilde{B}_{e^b}\right)^2+2\left(\tilde{B}_{e^{kb},e^{bt}}\right)^2
				\Bigg]dt
			\end{split}
		\end{equation*}
		where the sum over $m$ is zero when $k=1$.
		
		We rearrange the order of the above integrals by grouping with similar integrands. 
		
		Then, we have
		\begin{equation*}
			\begin{split}
				\int_{0}^{T}e^{-at}\tilde{B}^2_{e^{bt}}dt
				&=\int_{0}^{1}e^{-at}\tilde{B}^2_{e^{bt}}dt+\tilde{B}_{e^{b}}^2\sum_{k=1}^{T-1}\int_{k}^{k+1}2^k\cdot3^{2k}e^{-at}dt\\
				&\quad +\sum_{m=1}^{T-2}\bigg[\bigg(\sum_{k=m+1}^{T-1}\int_{k}^{k+1}2^{k-m+1}\cdot3^{2(k-m)}e^{-at}\tilde{B}^2_{e^{bm},e^{b(m+1)}}dt\bigg)\\
				&\hspace{4.5em}
				+2\int_{m}^{m+1}e^{-at}\tilde{B}^2_{e^{bm},e^{bt}}dt\bigg].
			\end{split}
		\end{equation*}
		
		Simplifying constant coefficients, the integral becomes
		\begin{align*}
			\int_{0}^{T}e^{-at}\tilde{B}^2_{e^{bt}}dt
			&=
			\int_{0}^{1}e^{-at}\tilde{B}^2_{e^{bt}}dt+\tilde{B}_{e^{b}}^2
			\sum_{k=1}^{T-1}18^k\int_{k}^{k+1}e^{-at}dt\\
			&\quad +\sum_{m=1}^{T-2}
			\bigg[
			\bigg(
			\sum_{k=m+1}^{T-1}2\cdot18^{k-m}\int_{k}^{k+1}
			e^{-at}\tilde{B}^2_{e^{bm},e^{b(m+1)}}dt
			\bigg)\\
			&\hspace{4.5em}
			+2\int_{m}^{m+1}e^{-at}\tilde{B}^2_{e^{bm},e^{bt}}dt
			\bigg].
		\end{align*}
		
	\end{proof}
	
	\section{Proof of Lemma 4.2} \label{integraloftailprobability}
	
	\begin{proof}
		First, we start by changing the variable. Let $y=\log x$, then $dy=\dfrac{1}{x}dx$. That is $e^ydy=dx$. Then 
		\begin{equation*}
			\begin{split}
				\int_{1}^{+\infty}P\left(X\geq\sqrt{\dfrac{\log x}{\gamma}}\right)dx
				&=\int_{0}^{+\infty}P\left(X\geq\sqrt{\dfrac{y}{\gamma}}\right)e^ydy\\
				&=\int_{0}^{+\infty}\left(\int_{\sqrt{\frac{y}{\gamma}}}^{+\infty}\dfrac{1}{\sqrt{2\pi \sigma^2}}e^{-\frac{z^2}{2\sigma^2}}dz\right)e^ydy.
			\end{split}
		\end{equation*}
		From \cite{Dubook}, we have
		
		\begin{align*}
		\int_{0}^{+\infty}\left(
		\int_{\sqrt{\frac{y}{\gamma}}}^{+\infty}
		\dfrac{1}{\sqrt{2\pi \sigma^2}}e^{-\frac{z^2}{2\sigma^2}}dz
		\right)e^ydy
		&\leq
		\int_{0}^{+\infty}\int_{\sqrt{\frac{y}{\gamma}}}^{+\infty}
		\frac{z}{\sqrt{\frac{y}{\gamma}}}\cdot\frac{1}{\sqrt{2\pi\sigma^2}}
		e^{-\frac{z^2}{2\sigma^2}}dz
		e^ydy.
		\end{align*}
		
		Changing variables by setting $p=\frac{z^2}{2\sigma^2}$, the inner integral becomes
		\[
		\frac{\sigma^2}{\sqrt{2\pi\sigma^2}\cdot\sqrt{\frac{y}{\gamma}}}
		\int_{\frac{y}{2\gamma\sigma^2}}^{+\infty}e^{-p}dp
		=
		\frac{\sigma^2}{\sqrt{2\pi\sigma^2}\cdot\sqrt{\frac{y}{\gamma}}}
		e^{-\frac{y}{2\gamma\sigma^2}}.
		\]
		
		Then we have the following
		
		\begin{equation*}
			\begin{split}
				\int_{1}^{+\infty}P\left(X\geq\sqrt{\dfrac{\log x}{\gamma}}\right)dx
				\leq
				\dfrac{1}{\sqrt{2\pi\sigma^2}}\int_{0}^{+\infty}\dfrac{\sqrt{\gamma}\sigma^2}{\sqrt{y}}\exp\left(\left(1-\dfrac{1}{2\gamma\sigma^2}\right)y\right)dy.
			\end{split}
		\end{equation*}
		
		Let $u=\sqrt{y}$, 
		\begin{equation*}
			\int_{1}^{+\infty}P\left(X\geq\sqrt{\dfrac{\log x}{\gamma}}\right)dx
			\leq\dfrac{2\sigma^2\sqrt{\gamma}}{\sqrt{2\pi\sigma^2}}\int_{0}^{+\infty}\exp\left(\left(1-\dfrac{1}{2\gamma\sigma^2}\right)u^2\right)du.
		\end{equation*}
		Since $1-\dfrac{1}{2\gamma\sigma^2}<0$, the above integral converges. And for all $b>0$, we have  $\int_{0}^{+\infty}e^{-bx^2}=\dfrac{\sqrt{\pi}}{2\sqrt{b}}$. Thus,
		\begin{equation*}
			\begin{split}
				\int_{1}^{+\infty}P\left(X\geq\sqrt{\dfrac{\log x}{\gamma}}\right)dx
				&\leq\dfrac{2\sigma^2\sqrt{\gamma}}{\sqrt{2\pi\sigma^2}}\cdot\dfrac{\sqrt{\pi}}{2\sqrt{\frac{1}{2\gamma\sigma^2}-1}}\\
				&=\dfrac{\sigma^2\gamma}{\sqrt{1-2\gamma\sigma^2}}.
			\end{split}
		\end{equation*}
	\end{proof}
	
	\section{Proof of Lemma 4.3 } \label{exp9/10t}
	
	\begin{proof}
		Let $M(t)=\sum_{n=3}^{\infty}\dfrac{\left(\frac{9}{10}t\right)^n}{n!}$ and $f(t)= e^{\frac{9}{10}t}-t^2-1$, then
		\begin{equation}
			\begin{split}
				f(t)
				&=\sum_{n=0}^{\infty}\dfrac{\left(\frac{9}{10}t\right)^n}{n!}-t^2-1\\
				&=1+\dfrac{9}{10}t+\dfrac{1}{2}\cdot\dfrac{81}{100}t^2+M(t)-t^2-1.\label{taylor}
			\end{split}
		\end{equation}
		We simplify (\ref{taylor}),
		\begin{equation*}
			\begin{split}
				f(t)
				&=\dfrac{9}{10}t+\left(\dfrac{81}{200}-1\right)t^2+M(t)\\
				&=\dfrac{t}{200}(180-119t)+M(t).
			\end{split}
		\end{equation*}
		When $180-119t\geq0$, i.e. $t\leq \dfrac{180}{119}$, we have $f(t)\geq0$.\\
		Then we are left to show when $t\geq\dfrac{180}{119}$, $f(t)\geq0$ is also true.\\
		
		We know that $e^{\frac{9}{10}t}$ and $t^2+1$ are monotonic increasing functions when $t\geq0$, then 
		at $t=\dfrac{180}{119}$, 
		\begin{equation*}
			f\left(\dfrac{180}{119}\right)=e^{\frac{9}{10}\cdot\frac{180}{119}}-\left(\left(\dfrac{180}{119}\right)^2+1\right)\approx3.901-3.288>0.
		\end{equation*}
		The first derivative is
		\begin{equation*}
			f'\left(\dfrac{180}{119}\right)=\dfrac{9}{10}e^{\frac{9}{10}\cdot\frac{180}{119}}-2\cdot \dfrac{180}{119}\approx 3.511-3.0252>0.
		\end{equation*}
		The second derivative is
		\begin{equation*}
			f''\left(\dfrac{180}{119}\right)=\left(\dfrac{9}{10}\right)^2e^{\frac{9}{10}\cdot\frac{180}{119}}-2\approx 3.160-2>0.
		\end{equation*}
		Since $f''$ is an increasing function,  when $t\geq\dfrac{180}{119}$, $f''(t)\geq0$.\\
		Since $f'\left(\dfrac{180}{119}\right)$, $f'(t)\geq0$  $\forall t\geq\dfrac{180}{119}$.
		Then we know that $f(t)\geq 0$ for $t\in\left[\dfrac{180}{119},+\infty\right]$.\\
		
		We finished the proof.
	\end{proof}

	\section{Noise F}
	
	Let $\{W_n\}$ be independent and identical distributed white noise in time and $\{\gamma_n\}_{n\in\mathbb{N}}$ is a collection of real numbers such that 
	\begin{equation*}
		\sum_{n\in\mathbb{N}}\gamma_n^2<+\infty \quad \text{ and }\quad \gamma_n^2\leq\dfrac{c}{n^\alpha} \forall n\in\mathbb{N}
	\end{equation*}
	where $c$ and $\alpha$ are positive constants. 
	
	Intuitively, we have
	\begin{equation*}
		\begin{split}
			\Cov[\dot{F}&(t,x),\dot{F}(s,y)]\\
			&=\E\left[\left(\sum_{n\in\mathbb{N}}\gamma_n\dot{W}_n(t)\varphi_n(x)\right)\cdot\left(\sum_{m\in\mathbb{N}}\gamma_m\dot{W}_m(s)\varphi_m(y)\right)\right]\\
			&=\sum_{n,m\in\mathbb{N}}\gamma_n\gamma_m\E\left[\dot{W}_n(t)\dot{W}_m(t)\right]\varphi_n(x)\varphi_m(y)\\
			&=\delta(t-s)\sum_{n\in\mathbb{N}}\gamma_n^2\varphi_n(x)\varphi_n(y).
		\end{split}
	\end{equation*}

\bibliography{bibdatabase}

\begin{thebibliography}{10}

\bibitem{BDCGS12}
Roland Bauerschmidt, Hugo Duminil-Copin, Jesse Goodman, and Gordon Slade.
\newblock Lectures on self-avoiding walks.
\newblock In {\em Probability and statistical physics in two and more
  dimensions}, volume~15 of {\em Clay Math. Proc.}, pages 395--467. Amer. Math.
  Soc., Providence, RI, 2012.

\bibitem{BSTW17}
Roland Bauerschmidt, Gordon Slade, Alexandre Tomberg, and Benjamin~C. Wallace.
\newblock Finite-order correlation length for four-dimensional weakly
  self-avoiding walk and {$|\varphi|^4$} spins.
\newblock {\em Ann. Henri Poincar\'{e}}, 18(2):375--402, 2017.

\bibitem{BS85}
David Brydges and Thomas Spencer.
\newblock Self-avoiding walk in 5 or more dimensions.
\newblock {\em Communications in Mathematical Physics}, 97(1):125--148, 1985.

\bibitem{Da78}
D.~A. Dawson.
\newblock Geostochastic calculus.
\newblock {\em Canad. J. Statist.}, 6(2):143--168, 1978.

\bibitem{dHo07}
Frank den Hollander.
\newblock {\em Random polymers}, volume 1974 of {\em Lecture Notes in
  Mathematics}.
\newblock Springer-Verlag, Berlin, 2009.
\newblock Lectures from the 37th Probability Summer School held in Saint-Flour,
  2007.

\bibitem{DoEd86}
M.~Doi and S.F. Edwards.
\newblock {\em The theory of polymer dynamics}.
\newblock Oxford University Press, Walton Street, Oxford, 1986.

\bibitem{Dubook}
Rick Durrett.
\newblock {\em Probability---theory and examples}, volume~49 of {\em Cambridge
  Series in Statistical and Probabilistic Mathematics}.
\newblock Cambridge University Press, Cambridge, 2019.
\newblock Fifth edition of [ MR1068527].

\bibitem{FloryP.J.1951Tcor}
P.J. Flory.
\newblock The configuration of real polymer chains [10].
\newblock {\em The Journal of chemical physics}, 19(10):1315--1316, 1951.

\bibitem{Gia07}
Giambattista Giacomin.
\newblock {\em Random polymer models}.
\newblock Imperial College Press, London, 2007.

\bibitem{GdH93}
Andreas Greven and Frank den Hollander.
\newblock A variational characterization of the speed of a one-dimensional
  self-repellent random walk.
\newblock {\em Ann. Appl. Probab.}, 3(4):1067--1099, 1993.

\bibitem{TS91}
Takashi Hara and Gordon Slade.
\newblock Critical behaviour of self-avoiding walk in five or more dimensions.
\newblock {\em Bull. Amer. Math. Soc. (N.S.)}, 25(2):417--423, 1991.

\bibitem{TS92}
Takashi Hara and Gordon Slade.
\newblock The lace expansion for self-avoiding walk in five or more dimensions.
\newblock {\em Rev. Math. Phys.}, 4(2):235--327, 1992.

\bibitem{JensenIwan2004Eosw}
Iwan Jensen.
\newblock Enumeration of self-avoiding walks on the square lattice.
\newblock {\em Journal of physics. A, Mathematical and general},
  37(21):5503--5524, 2004.

\bibitem{LMS95}
Bin Li, Neal Madras, and Alan~D. Sokal.
\newblock Critical exponents, hyperscaling, and universal amplitude ratios for
  two- and three-dimensional self-avoiding walks.
\newblock {\em J. Statist. Phys.}, 80(3-4):661--754, 1995.

\bibitem{MN22}
Carl Mueller and Eyal Neuman.
\newblock Scaling properties of a moving polymer.
\newblock {\em Ann. Appl. Probab.}, 32(6):4251--4278, 2022.

\bibitem{NienhuisBernard1982ECPa}
Bernard Nienhuis.
\newblock Exact critical point and critical exponents of o ( n ) models in two
  dimensions.
\newblock {\em Physical review letters}, 49(15):1062--1065, 1982.

\bibitem{Nu20}
Marc Nualart.
\newblock Distributional solutions for damped wave equations.
\newblock {\em Electron. J. Differential Equations}, pages Paper No. 131, 16,
  2020.

\bibitem{alma9946044013405216}
Mark~A. Pinsky.
\newblock {\em An introduction to stochastic modeling}.
\newblock Academic Press, Amsterdam ;, 4th ed. edition, 2011.

\bibitem{HdHK97}
R.~van~der Hofstad, F.~den Hollander, and W.~K\"{o}nig.
\newblock Central limit theorem for the {E}dwards model.
\newblock {\em Ann. Probab.}, 25(2):573--597, 1997.

\bibitem{WalshSPDE86}
John~B. Walsh.
\newblock An introduction to stochastic partial differential equations.
\newblock In {\em \'{E}cole d'\'{e}t\'{e} de probabilit\'{e}s de
  {S}aint-{F}lour, {XIV}---1984}, volume 1180 of {\em Lecture Notes in Math.},
  pages 265--439. Springer, Berlin, 1986.

\end{thebibliography}
\bibliographystyle{plain}

\end{document}